\newtheorem{thm}{Theorem}[section]
\newtheorem{cor}[thm]{Corollary}
\newtheorem{lem}[thm]{Lemma}
\newtheorem{prop}[thm]{Proposition}
\newtheorem{rem}{Remark}
\theoremstyle{definition}
\newtheorem{defn}[thm]{Definition}
\theoremstyle{remark}
\numberwithin{equation}{section}
\newcommand{\thmref}[1]{Theorem~\ref{#1}}
\newcommand{\lemref}[1]{Lemma~\ref{#1}}
\newcommand{\propref}[1]{Proposition~\ref{#1}}
\newcommand{\corref}[1]{Corollory~\ref{#1}}
\newcommand{\R}{\mathbb{R}}
\newcommand{\Lie}{\mathcal{L}}
\newcommand{\ric}{{\rm Ric}}
\newcommand{\di}{{\rm div}}
\newcommand{\la}{\bm{\langle}}
\newcommand{\ra}{\bm{\rangle}}
\newcommand{\vol}{{\rm{vol}}}
\begin{document}
	
	\title[Gradient estimates for $\Delta_pu-|\nabla u|^q+b(x)|u|^{r-1}u=0$]{Gradient estimates for $\Delta_pu-|\nabla u|^q+b(x)|u|^{r-1}u=0$ on a complete Riemannian manifold and Liouville type theorems}
	

	\author{Dong Han}
	\address{Institute of Mathematics, Academy of Mathematics and Systems Science, Chinese Academy of Sciences, Beijing
		100190, China}
	\email{handong181@mails.ucas.ac.cn}
	\author{Jie He}
	\address{College of Mathematics and Physics, Beijing University of Chemical Technology,  Chaoyang District, Beijing 100029, China}
	\email{hejie@amss.ac.cn}
	
	\author{Youde Wang*}
	\thanks{*Corresponding author}
	\address{1. School of Mathematics and Information Sciences, Guangzhou University; 2. Hua Loo-Keng Key Laboratory
		of Mathematics, Institute of Mathematics, Academy of Mathematics and Systems Science, Chinese Academy
		of Sciences, Beijing 100190, China; 3. School of Mathematical Sciences, University of Chinese Academy of Sciences,
		Beijing 100049, China.}
	\email{wyd@math.ac.cn}
	
	
	
	
	\keywords{non-linear elliptic equation, gradient estimate, $p$-Laplace}
	
	\begin{abstract}
		In this paper the Nash-Moser iteration method is used to study the gradient estimates of solutions to the quasilinear elliptic equation $\Delta_p u-|\nabla u|^q+b(x)|u|^{r-1}u=0$ defined on a complete Riemannian manifold $(M,g)$. When $b(x)\equiv0$, a unified Cheng-Yau type estimate of the solutions to this equation is derived. Regardless of whether this equation is defined on a manifold or a region of Euclidean space, certain technical and geometric conditions posed in \cite[Theorem E, F]{MR3261111} are weakened and hence some of the estimates due to Bidaut-V\'eron, Garcia-Huidobro and V\'eron (see \cite[Theorem E, F]{MR3261111}) are improved. In addition, we extend their results to the case $p>n=\dim(M)$. When $b(x)$ does not vanish, we can also extend some estimates for positive solutions to the above equation defined on a region of the Euclidean space due to Filippucci-Sun-Zheng \cite{filippucci2022priori} to arbitrary solutions to this equation on a complete Riemannian manifold. Even in the case of Euclidean space, the estimates for positive solutions in \cite{filippucci2022priori} and our results can not cover each other.
		
	\end{abstract}
	\maketitle
	\tableofcontents
	
	\section{Introduction}
	Gradient estimates are a fundamental and powerful tool in the analysis of partial differential equations on Riemannian manifolds. They can be used to deduce Liouville-type theorems (\cite{bidaut2020priori, bidaut2020DCDS, MR0431040, MR0834612, MR615628,WW}), obtain Harnack inequalities (\cite{MR0431040, MR0834612}), derive the estimate of the spectrum of manifolds(\cite{MR3275651, MR1333601}), and investigate the geometry of manifolds (\cite{MR2880214, MR3866881,MR2518892}). This topic has attracted the attention of many mathematicians.
	
	The present paper focuses on the following quasilinear equation
	\begin{align}\label{equ0}
		\Delta_pu-|\nabla u|^q+b(x) |u|^{r-1}u=0
	\end{align}
	defined on a complete Riemannian manifold $(M,g)$, which is the generalization of the equations \eqref{hamjoco}, \eqref{equa:2} and \eqref{psobo2}.
	
	If $b(x)$ is equal to zero, then the above equation \eqref{equ0} reduces to \begin{align}\label{hamjoco} \Delta_p u - |\nabla u| ^q = 0, \end{align} which is known as the quasilinear Hamilton-Jacobi equation. Using the well-known Bernstein technique, Lions \cite{MR833413} demonstrated that any $C^2$ solution to \eqref{hamjoco} on $\R^n$ with $q>1$ and $p=2$ must be constant. Recently, Bidaut-V\'eron, Garcia-Huidobro and V\'{e}ron \cite{MR3261111} discussed \eqref{equ0} in the case $n\geq p>1$ and established the following gradient estimate of the solutions to \eqref{hamjoco} regarding the distance to the boundary
	\begin{align}\label{jfathma}
		|\nabla u(x)|\leq c(n,p,q)(d(x,\partial\Omega))^{-\frac{1}{q+1-p}},\quad \forall x\in\Omega\subset\mathbb{R}^n,
	\end{align}
	and obtained some Liouville-type theorems. Moreover, they extended their estimates to solutions of equation \eqref{hamjoco} on complete non-compact manifolds $(M,g)$ with a lower bound depending on the Ricci curvature and sectional curvature etc. Concretely, for the case $q>p-1> 0$, if the Ricci curvature of $(M, g)$ satisfies $\mathrm{Ric}_g\geq-(n-1)B^2g$ and in addition
	$$
	\begin{cases}
		\mathrm{Sec}_g\geq -\widetilde{B}^2,&\text{ when } p>2;\\
		r_M(z)\geq d(z,\partial\Omega),& \text{ when } 1<p<2,
	\end{cases}
	$$
	where $\mathrm{Sec_g}$ is the sectional curvature and $r_M(z)$ is the convexity radius at the point $z\in M$, then any $C^1$ solution $u$ of \eqref{hamjoco} in $\Omega\subset M$ satisfies
	\begin{align}\label{jfathmd}
		|\nabla u(x)|^2\leq c(n,p,q)\max\left\{ B^{\frac{2}{q+1-p}},\, (1+ B_pd(x,\partial\Omega))^{\frac{1}{q+1-p}}
		d(x,\partial\Omega)^{-\frac{2}{q+1-p}}\right\},
	\end{align}
	where $B_p = B+(p-2)_+\widetilde{B}$.
	
	When $q=p$ in \eqref{hamjoco}, it is just the logarithmic transformation of the  $p$-Laplace equation.  In 2011,  Wang and Zhang \cite{MR2880214} used the Nash-Moser iteration method to study the $p$-harmonic functions on a complete Riemannian manifold with Ricci curvature bounded from below and obtained a local Cheng-Yau type gradient estimate.
	
	For any constant $k>0$, by a dilation tranformation $u=-kv $ of equation \eqref{equ0}, we can see that $v$ satisfies the following equation
	\begin{align*}
		\Delta_pv+k^{q-p+1}|\nabla v|^{q-p+1}-b(x)k^{r-p+1}|v|^{r-1}v=0.
	\end{align*}
	If we let $b(x)$ be a suitable constant function in the above equation and still denote $v$ by $u$, then we have
	\begin{align}
		\label{equa:2}
		\Delta_pu+N|\nabla u|^q+ |u|^{r-1}u=0,
	\end{align}
	where $N$ is a real constant.
	
	In the case $p=2$ in the above \eqref{equa:2},  Bidaut-V\'{e}ron, Garcia-Huidobro and V\'{e}ron (see \cite{bidaut2020priori, bidaut2020DCDS}) also studied the following semilinear equation
	\begin{align}
		\label{equation:1.6}
		\begin{cases}
			\Delta u + N|\nabla u|^q + |u|^{r-1}u=0\quad\text{in}\,\, \Omega\subset\R^n;\\
			n\geq 1, ~ r>1, ~ q >\frac{2r}{r+1}, ~ N > 0.
		\end{cases}
	\end{align}
	By using a delicate combination of refined Bernstein techniques and Keller-Osserman estimate, they proved that any solution $u$ of equation \eqref{equation:1.6} in a domain $\Omega\subset\R^n$ satisfies
	\begin{align*}
		|\nabla u(x)|\leq c(n,q,r)\left( N^{-\frac{r+1}{(r+1)q-2r}}+ (Nd(x,\partial\Omega))^{-\frac{1}{q-1}}\right), \quad\forall x\in\Omega.
	\end{align*}
	They also obtained a gradient estimate and Liouville theorems for positive solutions of \eqref{equa:2} when $n\geq 2, 1<r<(n+3)/(n-1), q<(n+2)/n$ and $N>0$.
	
	Later, Filippucci, Sun and Zheng \cite{filippucci2022priori} generalized in 2022 the results in \cite{bidaut2020priori} to the $p$-Laplace case of equation \eqref{equa:2}. In the case $N>0$, $r>\max\{p-1, 1\}$ and $q>pr/(r+1)$, they showed that there exists a positive constant $c(n,p,q,r)$ such that any positive solution to \eqref{equa:2} on $\Omega\subset \R^n$ satisfies
	\begin{align*}
		|\nabla u(x)|\leq c(n,p,q,r)\left( N^{-\frac{r+1}{(r+1)q-pr}}+(Nd(x,\partial\Omega))^{-\frac{1}{q-p+1}}\right), \quad\forall x\in\Omega.
	\end{align*}
	The arguments in \cite{ filippucci2022priori, bidaut2020priori} depend strongly on the invariance of translation of \eqref{equa:2} on $\R^n$ and the geometric symmetric structure of Euclidean space.
	
	Another related equation is the generalized Lane-Emden equation
	\begin{align}\label{lanemden}
		\begin{cases}
			\Delta_p u + u^r=0 \quad \text{in}\quad \mathbb R^n;\\
			u>0;\\
			n>p>1,\quad r>0,
		\end{cases}
	\end{align}
	which plays an important role on modeling meteorological or astrophysical phenomena \cite{CCK, CG, CK}. Equation \eqref{lanemden} has been widely studied in the literature \cite{ MR615628, MR982351, MR1134481, MR1004713, MR1121147}. Serrin and Zou (\cite{MR1946918}) proved that equation \eqref{lanemden} has a solution if and only if $r<np/(n-p)-1$.
	
	Very recently, inspired by the work of Wang and Zhang \cite{MR2880214}, Wang and Wei \cite{WW} adopted the Moser iteration to study the nonexistence of positive solutions to the semilinear elliptic equation
	$$\Delta u+au^{r}=0$$
	defined on a complete Riemannian manifold $(M, g)$ with $\dim(M)=n$, where $a$ is a positive constant. It is shown in \cite{WW} that, if the Ricci curvature of the manifold is nonnegative and $$r\in\left(-\infty,\quad  \frac{n+1}{n-1}+\frac{2}{\sqrt{n(n-1)}}\right),$$
	then the above equation does not admit any positive solution. Later, He, Wang and Wei(\cite{He-Wang-Wei}) generalized this result to the equation $\Delta_pu+au^r=0$ and improved the range of $r$ in which the gradient estimates and Liouville type theorems hold true. The results in \cite{WW, He-Wang-Wei} are an extension on Riemannian manifolds of the Liouville property of the Lane-Emden equation on an Euclidean space and some restriction assumptions in the previous work, for instance $r>0$ and $p\leq n$, were removed.
	
	On the other hand, the equation \eqref{equ0} is also related to the $L^p$-log Sobolev inequality. Chung and Yau \cite{chung1996logarithmic} showed that the critical function of the $L^2$-log-Sobolev inequality
	$$C_{M}\int_M u^2\log u^2 dV\leq \int_M |\nabla u|^2dV$$
	satisfies the following elliptic equation
	$$\Delta u + C_M u \log u^2 = 0.$$
	Motivated by Chung-Yau \cite{chung1996logarithmic} (see also \cite{DD, G}), one also considered the critical functions which achieve the Sobolev constant $C_{LS}$ of the $L^p$-log-Sobolev inequality
	$$
	C_{LS}\int_M u^p\log u^p dV\leq \int_M |\nabla u|^pdV,\quad  p>1, \quad u>0.
	$$
	It is not difficult to see that the critical functions satisfy the following Euler-Lagrange equation,
	\begin{align*}
		\Delta_pu + C_{LS}u^{p-1}\log u^p=0.
	\end{align*}
	By a logarithmic transformation $v=-(p-1)\log u $, the above equation becomes
	\begin{align}\label{psobo2}
		\Delta_pv-|\nabla v|^p+bv=0,
	\end{align}
	where $b = p(p-1)^{p-2}C_{LS}$. In particular, Wang and Xue \cite{MR4211885} in 2021 considered the corresponding parabolic equation of \eqref{psobo2}, i.e., the following equation
	\begin{align} \label{wangyuzhao}
		\partial_tv = \Delta_pv-|\nabla v|^p+bv.
	\end{align}
	They used the maximum principle to obtain a global Li-Yau type gradient estimate for solutions to equation \eqref{wangyuzhao} on compact Riemannian manifolds.
	
	It is worth mentioning that some mathematicians have studied some similar questions on metric measure spaces. For instance, Coulhon-Jiang-Koskela-Sikora \cite{coulhon2020gradient} gave a gradient estimate for heat kernels on a doubling metric measure space; Zhao-Yang \cite{MR3866881} studied the gradient estimate of the weighted $p$-Laplacian Lichnerowicz equation
	$$\Delta_{p,f}u+cu^{\sigma}=0$$
	on manifolds with $m$-Bakry-Emery Ricci curvature bounded from below. For more studies of this topic, we refer to \cite{MR3509520, MR2350853, MR4498474}.
	
	The most important motivation of the present paper is to study the gradient estimate of the equation \eqref{equ0} in Riemannian manifolds and extend some results obtained for the equation on a domain in an Euclidean space
	$$\Delta_p u + M|\nabla u|^q + |u|^{r-1}u=0  \quad\mbox{in}\quad  \Omega\subset \R^n$$
	by Bidaut-V\'eron, Garcia-Huidobro and V\'eron \cite{MR3261111, bidaut2020priori, bidaut2020DCDS} and Filippucci, Sun and Zheng \cite{filippucci2022priori} to the case on a Riemannian manifold. 
	
	Instead of the Bernstein method and the Keller-Osserman technique, which were used in \cite{MR3261111, filippucci2022priori,bidaut2020priori, bidaut2020DCDS, MR1004713}, we use the Nash-Moser iteration method to approach the gradient estimates inspired by Wang and Zhang \cite{MR2880214} (also see \cite{WW, DiW, Wy}). The Nash-Moser iteration method is known as an integral estimate method and seems to be more suitable for equations defined on Riemannian manifolds. Indeed, when one adopts the Bernstein method to approach such problems, one always needs to employ Barrier functions
	and hence uses the comparison theorems, which may require an assumption on the sectional curvature of the Riemannian manifold (see \cite[Theorem E, Theorem F]{MR3261111}, \cite[Lemma 2.2]{MR2518892}). However, the Nash-Moser iteration method only needs the condition on Ricci curvature.
	\medskip
	
	Now we are in the position to state our results. The following gradient estimate is the main theorem of this paper.
	\begin{thm}\label{t1}
		Let $(M,g)$ be a complete Riemannian manifold with $\mathrm{Ric}_g\geq-(n-1)\kappa g$ for some constant $\kappa\ge 0$. For any solution $u\in C^1(B_R(o))$ of the Hamilton-Jacobi equation \eqref{hamjoco} with $p>1$ and $q>p-1$, which is defined on a geodesic ball $B_R(o)$, we have
		$$\sup_{B_{\frac{R}{2}}(o)}|\nabla u|\leq C_{n,p,q} \left(\frac{1+\sqrt{\kappa} R }{R}\right)^{\frac{1}{q-p+1}}$$
		for some constant $C_{n,p,q}$ depending on $n$, $p$ and $q$.
	\end{thm}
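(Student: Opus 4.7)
The plan is to adapt the Nash–Moser iteration strategy of Wang–Zhang \cite{MR2880214} to the Hamilton–Jacobi equation \eqref{hamjoco}, exploiting the fact that the gradient nonlinearity gives $f := |\nabla u|^2$ a ``supersolution'' structure with exponent $\theta+1$, where $\theta := q-p+1 > 0$. To begin, I would combine the classical Bochner formula for $f$ with \eqref{hamjoco} rewritten as
\[
|\nabla u|^{p-2}\Delta u + (p-2)|\nabla u|^{p-4}\operatorname{Hess} u(\nabla u,\nabla u) = |\nabla u|^q.
\]
Solving for $\Delta u$, substituting in the Bochner formula, pairing $\nabla\Delta u$ with $\nabla u$, and applying a refined Kato inequality together with $\operatorname{Ric}\geq -(n-1)\kappa g$, one expects a pointwise inequality on $\{|\nabla u|>0\}$ of the form
\[
\mathcal{L} f \;\geq\; c_1 f^{\theta+1} - c_2 \kappa f - c_3 |\nabla f|^2/f,
\]
where $\mathcal{L}(\cdot) := \operatorname{div}\bigl(f^{(p-2)/2} A\,\nabla\cdot\bigr)$ is the linearized $p$-Laplacian at $u$, $A$ is a positive-definite matrix uniformly equivalent to the identity (with $p$-dependent bounds), and $c_i = c_i(n,p,q) > 0$.

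With a Lipschitz cut-off $\eta$ supported in $B_R$, equal to $1$ on $B_{R/2}$ and satisfying $|\nabla\eta|\leq C/R$, the next step is to multiply the inequality above by $\eta^{2\alpha} f^{\beta}$ and integrate. Integration by parts turns the $\mathcal{L}$-term into a non-negative Dirichlet-type integral plus an error involving $|\nabla\eta|$. Choosing $\beta$ above a threshold $\beta_0(n,p,q)$ allows absorption of the term $c_3 \int \eta^{2\alpha} f^\beta |\nabla f|^2/f$ into the positive part via Cauchy–Schwarz, yielding
\[
\int \eta^{2\alpha} f^{\beta+\theta+1} \;\leq\; C_\beta\left(\frac{1}{R^2}+\kappa\right)\int_{B_R}\eta^{2\alpha-2} f^{\beta+1}.
\]

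Since $\operatorname{Ric}\geq -(n-1)\kappa g$, the Saloff–Coste local Sobolev inequality is available on $B_R$ (with $\nu=n/(n-2)$ when $n\geq 3$; Morrey's inequality plays the analogous role in the regime $p>n$, with only minor bookkeeping changes). Applying it to $\phi = \eta^{\alpha} f^{(\beta+1)/2}$ and combining with the preceding estimate gives a reverse-H\"older-type inequality
\[
\|f\|_{L^{\nu(\beta+\theta+1)}(B_{(1-\tau)R})} \;\leq\; \bigl(C_\beta\,\tau^{-2}R^{-2}(1+\sqrt{\kappa}R)^{2}\bigr)^{1/(\beta+1)} \|f\|_{L^{\beta+1}(B_R)}.
\]
Because $\theta>0$, iterating over a dyadic sequence of radii and exponents $\beta_k$ chosen so that $\beta_{k+1}+1 = \nu(\beta_k+\theta+1)$ produces convergent constants, and the ``slack'' of $\theta$ between the exponents on the two sides permits a H\"older interpolation that eliminates the dependence on any initial $L^{\beta_0}$-norm, yielding the closed-form bound $\sup_{B_{R/2}} f \leq C_{n,p,q} ((1+\sqrt{\kappa}R)/R)^{2/\theta}$ stated in the theorem.

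The most delicate step is the first: one must derive the Bochner-type inequality with a strictly positive coefficient in front of $f^{\theta+1}$ and with the full Hessian error absorbed into $|\nabla f|^2/f$, while maintaining uniformity across both regimes $1<p<2$ and $p\geq 2$, where the sign of $p-2$ changes the algebraic role of the anisotropic Hessian term. A secondary technicality is that $f$ may vanish; the above computations are to be carried out on $\{f>0\}$ and justified globally by an $\varepsilon$-regularization (replacing $f$ with $f+\varepsilon$ and letting $\varepsilon \to 0$), following the procedure in \cite{MR2880214}.
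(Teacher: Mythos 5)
Your proposal follows essentially the same route as the paper's: a pointwise differential inequality for $f=|\nabla u|^2$ under the linearized $p$-Laplacian with a superlinear ``good'' term whose exponent exceeds that of the curvature term by exactly $q-p+1$, then Saloff--Coste's Sobolev inequality and Nash--Moser iteration, with the exponent gap exploited through a H\"older--Young interpolation against the cutoff term to remove any dependence on an initial norm of $f$ --- precisely the mechanism of \lemref{31}, \lemref{lem42} and \S\ref{sect3.3}. The only caveats are bookkeeping: your displayed powers (e.g.\ $f^{\theta+1}$, $\kappa f$ and $|\nabla f|^2/f$ in place of $f^{q-\frac{p}{2}+1}$, $\kappa f^{\frac{p}{2}}$ and $f^{\frac{q-1}{2}}|\nabla f|$) are the $p=2$ normalization and must carry the extra weight $f^{\frac{p}{2}-1}$ for general $p$ (the crucial exponent gap $q-p+1$ is unaffected, so the final balance is right), and no Morrey-type substitute is needed when $p>n$, since the Sobolev inequality is only ever applied in $W^{1,2}$ to powers of $f$ and the restriction $p\le n$ simply never enters.
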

	
	\begin{rem}
		\thmref{t1} can cover and improve some previous results.
		\begin{itemize}
			\item If $q=p$, \thmref{t1} covers the gradient estimate of Cheng and Yau \cite{MR385749} for $p=2$  and Wang and Zhang \cite{MR2880214}  for any $p>1$.
			\item When $p\neq q$, \thmref{t1} improves  some results of  \cite{MR3261111}. It removes the restriction $p\leq n$ in
			\cite[Theorem A]{MR3261111}, and drops the conditions on convexity radius and sectional curvature growth in \cite[Theorem E, Theorem F]{MR3261111}.
		\end{itemize}
	\end{rem}
	
	As a direct consequence, the following Liouville-type theorem is established.
	
	\begin{cor}\label{t2*}
		Let $(M,g)$ be a Riemannian manifold with non-negative Ricci curvature, that is, $\mathrm{Ric}\geq 0$. For any solution $u\in C^1(B_R(o))$ of the Hamilton-Jacobi equation \eqref{hamjoco} defined on $B_R(o)$, if $p>1$ and $q>p-1$,  then there exists a constant $C=C(n,p,q)$ such that
		\begin{align}\label{bvanish}
			\sup_{B_{\frac{R}{2}}(o)}|\nabla u|\leq C(n, p, q)\left(\frac{1}{R}\right)^{\frac{1}{q-p+1}}.
		\end{align}		
		Moreover, if $M$ is a non-compact complete manifold and $u$ is a global solution of equation \eqref{hamjoco} on $M$, then there holds $\nabla u\equiv0$ by letting $R\to\infty$ in \eqref{bvanish}, which means that $u$ is a constant.
	\end{cor}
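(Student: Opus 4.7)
The plan is to derive this corollary as an immediate specialization of Theorem \ref{t1}, followed by a straightforward scaling argument for the Liouville conclusion. Since the hypothesis $\mathrm{Ric}_g \geq 0$ corresponds to taking $\kappa = 0$ in Theorem \ref{t1}, I would first substitute $\kappa = 0$ into the conclusion
$$\sup_{B_{R/2}(o)} |\nabla u| \leq C_{n,p,q}\left(\frac{1+\sqrt{\kappa}R}{R}\right)^{\frac{1}{q-p+1}},$$
which collapses to exactly the bound \eqref{bvanish} with the same constant $C(n,p,q) = C_{n,p,q}$. Since nothing else needs to be verified, this half of the statement is essentially free.

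For the Liouville part, I would observe that when $u$ is a global $C^1$ solution on a non-compact complete manifold $(M,g)$ with $\mathrm{Ric}_g \geq 0$, every geodesic ball $B_R(o)$ is contained in $M$, so the local estimate \eqref{bvanish} applies for arbitrarily large $R$. Fixing any point $x \in M$ and any $R > 2d(x,o)$ ensures that $x \in B_{R/2}(o)$, and then
$$|\nabla u(x)| \leq C(n,p,q)\, R^{-\frac{1}{q-p+1}}.$$
Because $q > p-1$, the exponent $1/(q-p+1)$ is strictly positive, so the right-hand side tends to $0$ as $R \to \infty$. Hence $|\nabla u(x)| = 0$ for every $x$, and $u$ must be constant on each connected component of $M$ (and globally constant if $M$ is connected, which is the standing convention).

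The proof is a pure corollary, so there is no genuine obstacle once Theorem \ref{t1} is in hand; the only small point worth stating carefully is the remark that the exponent $1/(q-p+1)$ is positive exactly under the assumed range $q > p-1$, which is what makes the $R \to \infty$ limit effective. No curvature-dependent prefactor interferes because the $\sqrt{\kappa}R$ term vanishes identically, and this is precisely the geometric payoff of having non-negative Ricci curvature.
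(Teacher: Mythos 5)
Your proposal is correct and coincides with the paper's own (essentially one-line) argument: the corollary is obtained by setting $\kappa=0$ in Theorem \ref{t1} and, for the Liouville statement, applying the resulting bound on balls $B_R(o)$ with $R>2d(x,o)$ and letting $R\to\infty$, using that $q>p-1$ makes the exponent $1/(q-p+1)$ positive. Nothing further is needed.
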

	
	\begin{rem}
		\corref{t2*} can be viewed as an extension and improvement of \cite[Theorem A]{MR3261111}. If $(M, g)$ is a domain $\Omega$ in $\R^n$, then, obviously $B_d(x)\subset\Omega$ for $d=d(x,\partial\Omega)$ and $\kappa=0$. We can see easily that the estimate in the above \corref{t2*} is a considerable improvement of the corresponding estimate \eqref{jfathma}, i.e.,
		\begin{align*}
			|\nabla u(x)|\leq C(n, p,q)\left(d(x, \partial\Omega)\right)^{-\frac{1}{q-p+1}},
		\end{align*}
		stated in \cite[Theorem A]{MR3261111} and the condition $p\leq n$ supposed in  \cite[Theorem A]{MR3261111} is removed.
	\end{rem}
	
	\begin{thm}\label{addition}
		Assume that $M$ satisfies the same assumptions as in \thmref{t1}. If $u\in C^1(M)$ is a global solution to the Hamilton-Jacobi equation \eqref{hamjoco} on $M$, then for any fixed $a\in M$ and any $x\in M$, we have
		\begin{align}\label{equation:1.11}
			u(a)-c(n,p,q)\kappa^{\frac{1}{2(1-p+q)}}d(x,a)\leq  u(x)\leq u(a)+c(n,p,q)\kappa^{\frac{1}{2(1-p+q)}}d(x,a),
		\end{align}
		where $d(x,a)$ denotes the geodesic distance from $x$ to $a$.
		Especially when $p=q$, equation \eqref{hamjoco} is just the logarithmic transformation of the $p$-harmonic equation. Then for any positive $p$-harmonic function $v$ on $M$, there holds true
		\begin{align}\label{hanack2}
			v(a)e^{-c(n,p.q)\sqrt{\kappa}d(x,a)}\leq v(x)\leq  v(a)e^{c(n,p.q)\sqrt{\kappa}d(x,a)},\quad \forall x\in M.
		\end{align}
	\end{thm}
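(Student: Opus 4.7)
The plan is to derive \eqref{equation:1.11} as a direct consequence of \thmref{t1} by letting $R\to\infty$ to obtain a uniform gradient bound, then integrating along minimizing geodesics; the Harnack-type inequality \eqref{hanack2} will then follow by a logarithmic change of variable that converts the $p$-harmonic equation into the $q=p$ case of \eqref{hamjoco}. For the first step, fix an arbitrary $y\in M$. Since $u$ is a global $C^1$ solution on the complete manifold $(M,g)$, \thmref{t1} applies on every geodesic ball $B_R(y)$, giving
$$|\nabla u(y)|\leq \sup_{B_{R/2}(y)}|\nabla u|\leq C_{n,p,q}\left(\frac{1+\sqrt{\kappa}R}{R}\right)^{\frac{1}{q-p+1}}.$$
Letting $R\to\infty$ and using that $y$ was arbitrary yields the uniform pointwise bound $|\nabla u|\leq C_{n,p,q}\kappa^{1/(2(q-p+1))}$ on all of $M$.

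Given $a,x\in M$, Hopf-Rinow provides a unit-speed minimizing geodesic $\gamma:[0,L]\to M$ with $\gamma(0)=a$, $\gamma(L)=x$, and $L=d(x,a)$. The fundamental theorem of calculus combined with the uniform gradient bound then gives
$$|u(x)-u(a)|=\left|\int_0^L\langle\nabla u(\gamma(t)),\gamma'(t)\rangle\,dt\right|\leq C_{n,p,q}\kappa^{\frac{1}{2(q-p+1)}}\,d(x,a),$$
which is exactly \eqref{equation:1.11}.

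For the Harnack inequality, given a positive $p$-harmonic $v$ I would set $u=-(p-1)\log v$. A short computation with $\nabla u=-(p-1)v^{-1}\nabla v$ and $|\nabla u|^{p-2}\nabla u=-(p-1)^{p-1}v^{-(p-1)}|\nabla v|^{p-2}\nabla v$ shows, using $\Delta_p v=0$, that
$$\Delta_p u=-(p-1)^{p-1}v^{-(p-1)}\Delta_p v+(p-1)^p v^{-p}|\nabla v|^p=(p-1)^p v^{-p}|\nabla v|^p=|\nabla u|^p,$$
so $u$ solves \eqref{hamjoco} with $q=p$. Since $1/(2(q-p+1))=1/2$ in that case, applying \eqref{equation:1.11} to $u$, dividing by $p-1$, and exponentiating gives \eqref{hanack2}.

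The serious analytic content sits entirely inside \thmref{t1}, so no substantial obstacle is expected. The only calibration to watch is the coefficient in the logarithmic transformation of the third step: with $u=-c\log v$ the cross-term in $\Delta_p u$ produces $c^{p-1}(p-1)v^{-p}|\nabla v|^p$, which matches $|\nabla u|^p=c^p v^{-p}|\nabla v|^p$ precisely when $c=p-1$. Had I used $c=1$ instead, I would have obtained the rescaled equation $\Delta_p u=(p-1)|\nabla u|^p$, which would require an extra dilation before \eqref{equation:1.11} could be invoked.
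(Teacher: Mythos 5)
Your proposal is correct and follows essentially the same route as the paper: let $R\to\infty$ in the gradient estimate of \thmref{t1} to get the uniform bound $|\nabla u|\leq c(n,p,q)\kappa^{\frac{1}{2(q-p+1)}}$, integrate along a minimizing geodesic, and then obtain \eqref{hanack2} via the substitution $u=-(p-1)\log v$ (the paper writes $u=(1-p)\ln v$). Your explicit verification that the coefficient $p-1$ is the one making $\Delta_p u=|\nabla u|^p$ exact is a welcome detail the paper leaves implicit.
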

	
	\begin{rem}
		Inequality \eqref{equation:1.11}  is a logarithmic version of Harnack inequality. The Harnack inequality of $p$-harmonic functions in the case $p =2$ and $\kappa=0$ is due to Cheng and Yau \cite{MR385749}. Kortschwar and Li \cite{MR2518892} obtained a similar estimate but with an assumption on the sectional curvature.  Bidaut-V\'eron, Garcia-Huidobrob and V\'eron \cite[Theorem G, F]{MR3261111} obtained a similar result to \eqref{hanack2}  but with additional conditions on convexity radius and the growth of sectional curvature.
	\end{rem}
	
	When $b(x)$ does not vanish in equation \eqref{equ0}, the gradient estimate for solutions of \eqref{equ0} is more complicated. If $u\in L^k(B_R(o))$ for some $k>2rn$, then we can deduce the following gradient estimate.
	
	\begin{thm}\label{t4}
		Let $(M,g)$ be a complete Riemannian manifold satisfying $\mathrm{Ric}_g\geq-(n-1)\kappa g$ for some constant $\kappa\geq0$. Suppose that $b\in W^{1,\infty}(B_R(o))$ is a real function and the constants $p$, $q$ and $r$ associated with \eqref{equ0} satisfy
		\begin{align}\label{condi0}
			p>1,\quad q>\max\{1,p-1\}\quad\text{and}\quad r\geq 1.
		\end{align}
		If $u\in C^1(B_R(o)) $ is a solution of \eqref{equ0} in a geodesic ball $B_R(o)$ and there exists some $k>2rn$ such that $u\in L^k(B_R(o))$, then
		$$\sup_{B_{\frac{R}{2}}(o)}|\nabla u|\leq\max\left\{1,\, C\left [\left(\frac{1+\sqrt{\kappa}R}{R}\right)^{\frac{1}{q-p+1}}+ \left(R\|b\|^2_{1,\infty}\|u\|^{\frac{2k}{k/r-n}}_{k}\right)^{\frac{1}{q-p+1}}T^{\frac{n }{k/r-n}\frac{1}{q-p+1}} \right]\right\},$$
		where $T  = e^{c_0(1+\sqrt{\kappa} R )}V^{-\frac{2}{n}}R^2$ is the Sobolev constant of Saloff-Coste's Sobolev inequality (see \lemref{salof}) and the constant $C=C(n,p,q,r)$ depends on $n$, $p$, $q$ and $r$.
	\end{thm}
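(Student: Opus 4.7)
The plan is to extend the Nash--Moser iteration used for \thmref{t1} by absorbing the new lower-order contribution coming from $b(x)|u|^{r-1}u$. The starting point is a Bochner-type differential inequality for $f=|\nabla u|^2$, derived from the commutation formula $\Delta|\nabla u|^2=2|\hess u|^2+2\langle\nabla u,\nabla\Delta u\rangle+2\ric(\nabla u,\nabla u)$ combined with the linearization of $\Delta_p$ near $u$. Differentiating the equation $\Delta_p u=|\nabla u|^q-b(x)|u|^{r-1}u$ in the direction of $\nabla u$ produces, in addition to the terms already handled for \thmref{t1}, two new contributions $rb(x)|u|^{r-1}|\nabla u|^2$ and $|u|^{r-1}u\langle\nabla b,\nabla u\rangle$; both are pointwise dominated by $C\|b\|_{1,\infty}\bigl(|u|^{r-1}f+|u|^{r}\sqrt{f}\bigr)$ and will eventually be handled through the $L^{k}$ norm of $u$.

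Next I would test the resulting inequality against $\eta^{2\beta}f^{a-1}$, where $\eta$ is a standard Lipschitz cutoff with $\eta\equiv1$ on $B_{R/2}(o)$, $\eta=0$ outside $B_R(o)$, $|\nabla\eta|\le C/R$, and $a$ is the Moser iteration exponent. Integration by parts together with Cauchy--Schwarz and Young's inequalities handles the Hessian terms and the quasilinear gradient term exactly as in the proof of \thmref{t1}. For the new $b$-dependent terms, one applies H\"older with the conjugate pair $(k/r,k/(k-r))$ to pull out an $\|u\|_{k}^{2r}$ factor. This yields a Caccioppoli-type inequality of the schematic form
\begin{align*}
\int_{B_R(o)}\eta^{2\beta}\bigl|\nabla(f^{a/2})\bigr|^{2}\,dV \leq Ca^{2}\Bigl(\tfrac{1+\sqrt{\kappa}R}{R^{2}}+\|b\|_{1,\infty}^{2}\|u\|_{k}^{2r}\Bigr)\int_{B_R(o)}\eta^{2\beta-2}f^{a\bar q}\,dV,
\end{align*}
where $\bar q=k/(k-r)$. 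The assumption $k>2rn$ is precisely what is needed for the Sobolev exponent $\chi=n/(n-2)$ of \lemref{salof} to dominate $\bar q$, thereby providing a genuine gain $\chi/\bar q>1$ in the iteration.

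Feeding this back into Saloff-Coste's Sobolev inequality and iterating $a\mapsto a(\chi/\bar q)$ along a dyadic sequence of radii shrinking from $R$ down to $R/2$, summing the geometric series of exponents and restoring the Sobolev constant $T$, one arrives at the pointwise bound on $|\nabla u|$ with the additive splitting reflecting its two sources: the purely geometric part $((1+\sqrt{\kappa}R)/R)^{1/(q-p+1)}$ inherited from the $b\equiv 0$ argument and the new part $(R\|b\|_{1,\infty}^{2}\|u\|_{k}^{2k/(k/r-n)})^{1/(q-p+1)}T^{n/((k/r-n)(q-p+1))}$ coming from the inhomogeneity. The main obstacle will be the exponent bookkeeping: the $|u|^{r-1}f$ and $|u|^{r}\sqrt{f}$ terms must be distributed so that a factor of $f^{a}$ can be absorbed into the good $|\nabla(f^{a/2})|^{2}$ term on the left while the remaining $u$-factor sits in an $L^{k}$ norm with conjugate exponent compatible with the Sobolev gain; the sharp combination of these constraints is exactly $k>2rn$. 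The outer $\max\{1,\cdot\}$ in the statement finally reflects that the iteration yields a nontrivial bound only when the right-hand side already exceeds $1$.
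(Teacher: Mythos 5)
Your overall strategy (Moser iteration on $f=|\nabla u|^2$, H\"older to extract $\|u\|_k^{2r}$ from the $b$-terms, Saloff-Coste Sobolev) is the right family of argument, but two mechanisms that the paper relies on are missing or misstated, and each is a genuine gap. First, after testing the differential inequality for $f$ against $f^{\alpha}\eta^2$, the $b$-contributions appear with \emph{several different} powers of $f$ (schematically $f^{\alpha-q+\frac{p}{2}}$, $f^{\alpha-\frac{p}{2}+1}$, $f^{\alpha-q+\frac{p}{2}+1}$), all strictly lower than the good power $f^{\alpha+\frac{p}{2}}$. You acknowledge the "exponent bookkeeping" but give no mechanism to unify them; for small $f$ these lower powers are \emph{not} dominated by $f^{\alpha+\frac{p}{2}}$. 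The paper resolves this by inserting the truncation $\chi=\chi_{\{f\ge 1\}}$ into the test function, checking that the resulting boundary integral over $\{f=1\}$ has the right sign, and only then raising all powers to $\alpha+\frac{p}{2}$. This truncation is also the true origin of the $\max\{1,\cdot\}$ in the conclusion (the iteration only bounds $\chi f$, i.e.\ $|\nabla u|$ on the set where it is at least $1$); your explanation of the $\max$ is not correct.

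Second, your treatment of the term $\|b\|_{1,\infty}^2\|u\|_k^{2r}\int f^{\alpha+\frac{p}{2}}(\cdots)$ diverges from what actually closes the argument. As written, your Caccioppoli inequality has the raw integral $\int \eta^{2\beta-2}f^{a\bar q}$ on the right with $\bar q>1$; combined with Sobolev this gives a recursion of the form $\|f\|_{a\chi}\le C^{1/a}\|f\|_{a\bar q}^{\bar q}$, which is superlinear in the norm and does not telescope to a finite bound by the usual geometric-series argument. The paper instead keeps the full Sobolev gain $\tfrac{n}{n-2}$: after H\"older with $c=\tfrac{k}{2r}$ it interpolates $\bigl(\int(\eta^2(\chi f)^{\alpha+\frac p2})^{\frac{c}{c-1}}\bigr)^{\frac{c-1}{c}}$ between $L^1$ and $L^{\frac{n}{n-2}}$ via Young's inequality with a small $\epsilon$, absorbs the $L^{\frac{n}{n-2}}$ piece into the left-hand Sobolev term, and pays the price $\epsilon^{-\frac{n}{2c-n}}$ inside an implicitly defined $\alpha_0$ solving $\alpha_0^2=c_1^2(1+\sqrt{\kappa}R)^2+R^2\|b\|^2_{1,\infty}\|u\|_k^{\frac{4rc}{2c-n}}(T\alpha_0)^{\frac{n}{2c-n}}$. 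This is where the hypothesis $k>2rn$ is actually used: it guarantees $\tfrac{n}{2c-n}\le 1$, so that this equation has a unique solution with the quadratic-type upper bound $\alpha_0\le c_1(1+\sqrt{\kappa}R)+R^2\|b\|^2_{1,\infty}\|u\|_k^{\frac{4rc}{2c-n}}T^{\frac{n}{2c-n}}$, which in turn produces exactly the exponents $\|u\|_k^{\frac{2k}{k/r-n}}$ and $T^{\frac{n}{k/r-n}\frac{1}{q-p+1}}$ in the statement. Your claim that $k>2rn$ is "precisely what is needed" for $\chi/\bar q>1$ is not right (that only needs roughly $k>nr$), and a reduced-gain iteration would in any case not obviously reproduce the stated constants. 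To repair the proposal you would need to add the $\{f\ge1\}$ truncation and replace the reduced-gain scheme by the $\epsilon$-absorption into the Sobolev term.
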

	
	If $u\in L^\infty(\Omega)$, the gradient estimate for solutions to \eqref{equ0} can also be established.
	\begin{thm}\label{t5}
		Let $(M,g)$ be a complete Riemannian manifold with $\mathrm{Ric}_g\geq-(n-1)\kappa g$ for some constant $\kappa\geq0$. Suppose that $b\in W^{1,\infty}(B_R(o))$ is a real function and the constants $p$, $q$ and $r$ in \eqref{equ0} satisfy
		\begin{align}\label{condi0}
			p>1,\quad q>\max\{1,p-1\}\quad\text{and}\quad r\geq 1.
		\end{align}
		Then, for any solution $u\in C^1(B_R(o)) \cap L^{\infty}(B_R(o))$ to \eqref{equ0} in a geodesic ball $B_R(o)$ there holds true
		$$\sup_{B_{\frac{R}{2}}(o)}|\nabla u|\leq\max\left\{1,\, C \left[\left(\frac{1+\sqrt{\kappa}R}{R}\right)^{\frac{1}{q-p+1}}+N\right]\right\}$$
		where the constants
		$$C=C(n,p,q,r)\quad\text{and}\quad  N =\left(\|b\|^2_{1,\infty}+\|b\|^2_{1,\infty}\|u\|^{2r}_{\infty}\right)^{\frac{1}{2(q-p+1)}}.$$
	\end{thm}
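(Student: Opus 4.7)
The strategy is to follow the Nash-Moser iteration scheme used in the proof of Theorem~\ref{t1}, treating the source term $b(x)|u|^{r-1}u$ as a bounded perturbation thanks to the $L^\infty$ assumption on $u$. Since $u\in L^\infty(B_R(o))$, the inhomogeneity $b|u|^{r-1}u$ is pointwise bounded by $\|b\|_\infty\|u\|_\infty^r$; after differentiating the equation once, the residual contributions are controlled respectively by $\|\nabla b\|_\infty\|u\|_\infty^r$ and $r\|b\|_\infty\|u\|_\infty^{r-1}|\nabla u|$. The scale $N$ appearing in the statement is tuned exactly so that, for $q>p-1$, these quantities can be absorbed into the leading gradient nonlinearity generated by $|\nabla u|^q$ in the Bochner formula.

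Concretely I would proceed as follows. Set $f=|\nabla u|^2+N^2$, shifting by the natural source scale, and derive, as in the proof of Theorem~\ref{t1}, a Bochner-type differential inequality for a power $f^\alpha$ involving the linearized $p$-Laplace operator $\mathcal{L}_u$. After substituting $\Delta_pu=|\nabla u|^q-b|u|^{r-1}u$ into the Bochner identity and differentiating the equation once to generate the source contributions, the Ricci lower bound produces a $-(n-1)\kappa f$ term while the gradient nonlinearity contributes a good positive term of order $f^{(q-p+3)/2}$. Every $b$-dependent contribution is then controlled, via Young's inequality and the $L^\infty$ bound on $u$, by $\varepsilon f^{(q-p+3)/2}+C_\varepsilon N^{2(q-p+1)}f$; the second summand is reabsorbed since $N^2\leq f$ by construction. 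The resulting inequality is tested against $f^\beta\varphi^{2\gamma}$ for a Lipschitz cutoff $\varphi$ supported in $B_R(o)$ with $\varphi\equiv 1$ on $B_{R/2}(o)$ and $|\nabla\varphi|\leq C/R$; an integration by parts yields a reverse H\"older estimate between $\|f\|_{L^s}$ norms on nested balls, with a geometric loss proportional to $R^{-2}(1+\sqrt{\kappa}R)$ coming from the cutoff and the curvature term. Invoking Saloff-Coste's Sobolev inequality (\lemref{salof}) to upgrade the integrability, one then iterates along the standard Moser sequence of exponents to reach the announced $L^\infty$ bound for $f$, and hence for $|\nabla u|$.

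The main technical obstacle is the careful bookkeeping of the interaction between the Hamilton-Jacobi gradient nonlinearity $|\nabla u|^q$ and the $b$-source after differentiation. One estimates $\nabla(b|u|^{r-1}u)$ in absolute value by $\|\nabla b\|_\infty\|u\|_\infty^r+r\|b\|_\infty\|u\|_\infty^{r-1}|\nabla u|$, then applies Young's inequality with carefully tuned exponents so that both pieces are dominated by $\varepsilon f^{(q-p+3)/2}$ at the cost of a constant multiple of $N^{2(q-p+1)}f$. The hypothesis $q>\max\{1,p-1\}$ is exactly what enables this absorption: $q>1$ is needed to handle the term linear in $|\nabla u|$ coming from $b\nabla(|u|^{r-1}u)$, and $q>p-1$ is needed to close the Moser iteration with a positive power on the right. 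A secondary subtlety, inherited from the proof of Theorem~\ref{t1}, is that for $1<p<2$ the linearized operator $\mathcal{L}_u$ degenerates where $\nabla u=0$; this is resolved by the standard regularization-and-limit argument. Once the differential inequality has been put in the above form, the Moser iteration closes uniformly and delivers the $\max\{1,\cdots\}$ on the right-hand side, the constant $1$ simply covering the trivial regime in which $|\nabla u|$ is already comparable to the source scale $N$.
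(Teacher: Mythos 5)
Your overall framework (test a Bochner-type inequality for $\mathcal L(f)$ against powers of $f$ times a cutoff, apply Saloff-Coste, and run the Moser iteration) is the paper's framework, but the device you use to neutralize the $b$-terms is different from the paper's and, as written, its exponent arithmetic does not close. You shift $f=|\nabla u|^2+N^2$ and claim every $b$-dependent contribution is bounded by $\varepsilon f^{(q-p+3)/2}+C_\varepsilon N^{2(q-p+1)}f$, the second piece being ``reabsorbed since $N^2\le f$''. Two problems. First, the good term produced by substituting the equation into $\frac{1}{n-1}\bigl(\sum_{i\ge2}u_{ii}\bigr)^2$ is $\frac{2}{n-1}f^{q-\frac p2+1}$ (see \eqref{basiclm}), not $f^{(q-p+3)/2}$; these coincide only for $q=1$. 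Second, even granting your exponents, $N^{2(q-p+1)}f\le f^{q-p+2}$ via $N^2\le f$, and $f^{q-p+2}\le f^{(q-p+3)/2}$ for large $f$ forces $q\le p-1$ --- i.e.\ the absorption is impossible exactly in the regime $q>p-1$ in which you are working. (Even against the correct good term $f^{q-\frac p2+1}$, absorbing $f^{q-p+2}$ would require $p\ge2$.) So the central absorption step is broken, not merely suboptimal.

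The paper avoids this by a different mechanism: it multiplies the pointwise inequality \eqref{basiclm} by $f^{\alpha}\eta^2\chi$ with $\chi=\chi_{\{f\ge1\}}$, uses Cauchy's inequality to split the $b$-terms into $\frac{1}{4(n-1)}f^{\alpha+q-\frac p2+1}$ plus leftovers such as $\|b\|_{1,\infty}^2|u|^{2r}f^{\alpha-q+\frac p2}$, and then, \emph{on the set $\{f\ge1\}$ only}, raises all of these lower powers up to $f^{\alpha+\frac p2}$ (this is where the hypothesis $q\ge1$ is actually used). The resulting term $\|b\|_{1,\infty}^2(1+\|u\|_\infty^{2r})\int f^{\alpha+\frac p2}\eta^2\chi$ is then not absorbed by Young at all: it is folded into the iteration parameter $\alpha_0=c_1\bigl(1+\sqrt\kappa R+R\|b\|_{1,\infty}(1+\|u\|_\infty^{2r})^{1/2}\bigr)$, and the good term $f^{\alpha+q-\frac p2+1}$ dominates $\alpha_0^3f^{\alpha+\frac p2}$ only on $\{f\ge(2a_9\alpha_0^2/a_8R^2)^{1/(q-p+1)}\}$ --- which is exactly where the $N$ in the final bound comes from. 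This also explains the $\max\{1,\cdot\}$: the whole argument estimates only $\chi f$, and where $f<1$ the bound is trivial. Your explanation of the $1$ (``the regime where $|\nabla u|$ is comparable to $N$'') is not the right one, and it signals that the small-$f$ degeneracy of the lower powers $f^{\alpha-q+\frac p2}$ etc.\ is unaddressed in your plan, since $N$ may be arbitrarily small.
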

	
	\begin{rem}
		Assume that $u$ is an entire solution to equation \eqref{equ0} on a non-compact Riemannian manifold $M$. If $u\in L^\infty$, then $|\nabla u|$ is uniformly bounded by letting $R\to\infty$; however, if $u\in L^k$, then \thmref{t4} can not guarantee that $|\nabla u|$ is globally bounded.
	\end{rem}
	
	The completeness of the Riemannian manifold and the lower boundedness of Ricci curvature are necessary for the application of Saloff-Coste's Sobolev inequalities (\lemref{salof}, see \cite{saloff1992uniformly}). If we consider the equation defined on a region of $\R^n$, then the lower boundedness of the Ricci curvature and the Sobolev inequality holds naturally, thus we can obtain the following result.
	
	\begin{cor}\label{coro}
		Let $\Omega\subset \R^n$ be a domain. Suppose that $u\in C^1(\Omega)$ is a solution to the following equation on $\Omega$
		\begin{align}\label{fsz}
			\Delta_pu+N|\nabla u|^q+|u|^{r-1}u=0
		\end{align}
		with $p$, $q$ and $r$ satisfying \eqref{condi0}. Then, there exists a constant $C=C(n,p,q,r, \|u\|_{L^\infty(\Omega)})$ such that
		$$
		|\nabla u(x)|\leq C\left(1+d(x,\partial\Omega)^{-\frac{1}{q-p+1}}\right), \quad \forall x\in\Omega.
		$$
	\end{cor}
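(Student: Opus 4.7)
The plan is to localize the global estimate of \thmref{t5} and to bring equation \eqref{fsz} into the form of \eqref{equ0} by a simple homothetic transformation. Since $\Omega\subset\R^n$, the Ricci curvature lower bound required by \thmref{t5} is satisfied trivially with $\kappa=0$, and Saloff-Coste's Sobolev inequality (\lemref{salof}) reduces to the standard Euclidean one.

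First I would reduce \eqref{fsz} to \eqref{equ0}. Assume $N\neq 0$; the case $N=0$ is the pure $p$-Laplace--Lane--Emden equation $\Delta_pu+|u|^{r-1}u=0$, already handled by He--Wang--Wei. Set $\lambda=|N|^{\frac{1}{q-p+1}}>0$ and let $w=-\lambda u$ if $N>0$ (respectively $w=\lambda u$ if $N<0$). A direct computation using $\nabla w=\mp\lambda\nabla u$ together with the identity $\Delta_p(cv)=c|c|^{p-2}\Delta_p v$ for a constant $c\neq 0$ shows that $w$ satisfies
$$\Delta_p w-|\nabla w|^q+b_0|w|^{r-1}w=0,$$
where $b_0=\pm|N|^{\frac{p-1-r}{q-p+1}}$ is a constant. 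In particular, $\|b_0\|_{W^{1,\infty}(\Omega)}=|b_0|$ depends only on $n,p,q,r,N$, while $\|w\|_{L^\infty(\Omega)}=\lambda\|u\|_{L^\infty(\Omega)}$ is finite by hypothesis.

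Next, for each $x_0\in\Omega$, I would set $R=d(x_0,\partial\Omega)$ so that $B_R(x_0)\subset\Omega$, and apply \thmref{t5} to $w$ on $B_R(x_0)\subset\R^n$ with $\kappa=0$. This yields
$$\sup_{B_{R/2}(x_0)}|\nabla w|\leq\max\left\{1,\ C(n,p,q,r)\left[R^{-\frac{1}{q-p+1}}+\widetilde N\right]\right\},$$
where
$$\widetilde N=\left(|b_0|^2+|b_0|^2\|w\|_{L^\infty}^{2r}\right)^{\frac{1}{2(q-p+1)}}$$
is a constant depending only on $n,p,q,r,N,\|u\|_{L^\infty(\Omega)}$. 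Evaluating at the center $x_0$ and undoing the scaling via $|\nabla u|=\lambda^{-1}|\nabla w|$ produces
$$|\nabla u(x_0)|\leq C'\left(1+d(x_0,\partial\Omega)^{-\frac{1}{q-p+1}}\right)$$
for some $C'=C'(n,p,q,r,N,\|u\|_{L^\infty(\Omega)})$; since $x_0$ is arbitrary, the conclusion follows.

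The argument is essentially mechanical once \thmref{t5} is in hand; the only point worth checking is that the homothetic transformation produces a genuine instance of \eqref{equ0} with a $W^{1,\infty}$ coefficient, which is automatic because $b_0$ is a pure constant. I therefore do not anticipate a substantive obstacle. The interest of the statement is observational: our Riemannian estimate, specialized to a Euclidean domain, recovers the Bidaut-V\'eron--Filippucci--Sun--Zheng boundary-distance gradient bound without any of the geometric side conditions (convexity radius, sectional curvature growth) imposed by the Bernstein-type proofs in \cite{MR3261111,filippucci2022priori}.
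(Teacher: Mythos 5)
Your proof is correct and is exactly the route the paper intends: the paper offers no separate argument for Corollary~\ref{coro} beyond the remark that on a Euclidean domain the Ricci lower bound and the Sobolev inequality hold with $\kappa=0$, so that \thmref{t5} applies on $B_{d(x,\partial\Omega)}(x)$, and your homothety $w=\mp|N|^{1/(q-p+1)}u$ turning \eqref{fsz} into an instance of \eqref{equ0} with constant coefficient $b_0=|N|^{(p-1-r)/(q-p+1)}$ is precisely the step the paper leaves implicit. The one discrepancy you correctly flag is that the resulting constant also depends on $N$ (and the argument requires $N\neq 0$), whereas the corollary as stated lists only $C(n,p,q,r,\|u\|_{L^\infty(\Omega)})$.
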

	
	\begin{rem}
		Filippucci-Sun-Zheng \cite{filippucci2022priori} employed the Keller-Osserman method to prove that, if
		$$
		N>0, \quad p>1,\quad r>\max\{1, p-1\} \quad\mbox{and}\quad q>\frac{rp}{r+1},
		$$
		then any positive solution of \eqref{fsz} in a domain $\Omega\subset \R^n$ satisfies
		\begin{align*}
			|\nabla u(x)|<C(n,p,q,r)\left(1+d(x,\partial\Omega)^{-\frac{1}{q-p+1}}\right).
		\end{align*}
		It is worthy to point out that the range of $n$, $p$, $q$ and $r$ are wider in \corref{coro}, but the constant $C(n,p,q,r)$ in the above estimate does not depend on the $L^\infty$ norm of $u$. Even in the cases of Euclidean spaces and positive solutions, the estimate of Filippucci-Sun-Zheng and \corref{coro} cannot cover each other.
	\end{rem}
	
	If we consider positive solutions of equation \eqref{equ0}, we can obtain a local $L^{\infty}$-estimate for such solutions. More precisely, we deduce an $L^{\infty}$-estimate for the solutions of a second-order partial differential inequality.
	
	\begin{prop}\label{c0estimate}
		Let $(M,g)$ be a complete Riemannian manifold and for $1<p<n$ the following Sobolev inequality on $M$ holds true
		\begin{align}\label{lpsobolev}
			\left(\int_M |u|^{\frac{np}{n-p}}\right)^{\frac{n-p}{pn}}\leq C(M)\left(\int_M|u|^p+|\nabla u|^{p}\right)^{\frac{1}{p}} ,\quad \forall u\in W^{1,p}(M).
		\end{align}
		Denote a geodesic ball in $(M,g)$ centered at $o$ and with radius $r$ by $B_R(o)$. If $u\in C^1(B_{R}(o))$ is a positive function which satisfies
		$$\Delta_p u\geq -Cu^r,$$
		where $0< r\leq p-1$, $1< p<n$ and $C$ is a constant, then the following $C^0$-estimate is true
		$$
		\sup_{B_{\frac{R}{2}}(o)}|u|\leq \max\left\{1,\, C(n,p,r,M)\|u\|_{L^p(B_{R}(o))}\right\}.
		$$
	\end{prop}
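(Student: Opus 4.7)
The plan is to carry out a De Giorgi--Nash--Moser iteration, adapted to handle the subcritical nonlinearity $r\le p-1$. First I would fix a cutoff $\phi\in C_c^\infty(B_R(o))$ with $\phi\equiv 1$ on $B_{R/2}(o)$ and $|\nabla\phi|\le C/R$, and, for $\beta\ge 1$, test the inequality $\Delta_pu\ge -Cu^r$ against $u^\beta\phi^p$. After integration by parts and an application of Young's inequality to the cross term, this yields
\begin{align*}
\int u^{\beta-1}|\nabla u|^p\phi^p\le C(\beta)\left(\int u^{\beta+p-1}|\nabla\phi|^p+\int u^{\beta+r}\phi^p\right).
\end{align*}
The key observation is that, since $r\le p-1$, $u>0$ and $\beta\ge 1$, one has the pointwise bound $u^{\beta+r}\le u^{\beta+p-1}+1$ (split into the regions $\{u\le 1\}$ and $\{u>1\}$), so the right-hand side is controlled by $C(\beta)\bigl(\int u^{\beta+p-1}(|\nabla\phi|^p+\phi^p)+|B_R(o)|\bigr)$. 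Writing $\gamma=(\beta+p-1)/p$ and using the identity $u^{\beta-1}|\nabla u|^p=\gamma^{-p}|\nabla u^\gamma|^p$ converts this into an energy estimate for $u^\gamma\phi$.

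Next I would apply the Sobolev inequality \eqref{lpsobolev} to $u^\gamma\phi$ to obtain a reverse-H\"older step: with $s=p\gamma=\beta+p-1$ and $p^*=np/(n-p)$,
\begin{align*}
\|u^\gamma\phi\|_{L^{p^*}}^p\le C(s)\bigl(\|u\|_{L^s(B_R(o))}^s+|B_R(o)|\bigr).
\end{align*}
Iterating in the usual way with $s_0=p$, $s_{k+1}=s_k\,n/(n-p)$, and a nested family of cutoffs $\phi_k$ supported in $B_{R_k}$, equal to $1$ on $B_{R_{k+1}}$, and $R_k\downarrow R/2$, then gives
\begin{align*}
\|u\|_{L^{s_{k+1}}(B_{R_{k+1}}(o))}\le C_k^{1/s_k}\max\bigl\{\|u\|_{L^{s_k}(B_{R_k}(o))},\;|B_R(o)|^{1/s_k}\bigr\},
\end{align*}
where $C_k$ grows at most polynomially in $s_k$ and depends on $R_k-R_{k+1}$ and the Sobolev constant of $M$. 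Composing these inequalities and letting $k\to\infty$, using that $s_k$ grows geometrically (so that $\sum_k s_k^{-1}\log C_k<\infty$) and that $|B_R(o)|^{1/s_k}\to 1$, I would arrive at
\begin{align*}
\sup_{B_{R/2}(o)}u\le C(n,p,r,M)\max\bigl\{1,\,\|u\|_{L^p(B_R(o))}\bigr\},
\end{align*}
which is the claim.

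The main obstacle, as usual in Moser iteration, is the careful bookkeeping of constants: one has to verify that the polynomial growth of $C_k$ in $s_k$ is absorbed by the geometric decay of the exponent $1/s_k$, and that the inhomogeneous term $|B_R(o)|^{1/s_k}$ tends to $1$ without spoiling the composition. A minor technical point is that $u$ is only $C^1$, so $\Delta_pu\ge -Cu^r$ must be read in the weak sense, and the test function $u^\beta\phi^p$ should first be replaced by $\min(u,N)^\beta\phi^p$, with the truncation parameter $N$ sent to infinity by monotone convergence at the end.
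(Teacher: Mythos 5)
Your overall strategy (test with $u^{\beta}\phi^{p}$, Young's inequality, the Sobolev inequality \eqref{lpsobolev}, reverse-H\"older, Moser iteration with $s_{k+1}=s_k\,n/(n-p)$) is exactly the paper's, and the energy estimate and Sobolev step are set up correctly. The gap is in how you handle the zeroth-order term. The splitting $u^{\beta+r}\le u^{\beta+p-1}+1$ injects the inhomogeneous term $|B_R(o)|$ into \emph{every} step of the iteration, and in particular into the first step, where the exponent is $1/s_0=1/p$: the first reverse-H\"older inequality reads $\|u\|_{L^{s_1}(B_{R_1})}\le C_0^{1/p}\max\{\|u\|_{L^p(B_R)},\,|B_R(o)|^{1/p}\}$, and this $|B_R(o)|^{1/p}$ then propagates unchanged through the composition. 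The fact that $|B_R(o)|^{1/s_k}\to 1$ as $k\to\infty$ is irrelevant, because the dominant volume contribution comes from the earliest (smallest-exponent) steps, not the late ones. What your iteration actually yields is $\sup_{B_{R/2}}u\le C\max\{1,\,\|u\|_{L^p(B_R)},\,|B_R(o)|^{1/p}\}$, which is strictly weaker than the asserted $\max\{1,\,C\|u\|_{L^p(B_R)}\}$ (take $u\equiv\epsilon$ with $\epsilon$ small on a ball of large volume: the claimed bound is $1$, yours is $C|B_R(o)|^{1/p}$).

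The fix, which is the one device in the paper's proof that your argument is missing, is to insert the characteristic function $\chi=\chi_{\{u\ge 1\}}$ into the test function, i.e.\ test with $u^{s}\eta^{t}\chi$ and run the whole iteration on $u\chi$ rather than on $u$. On $\{u\ge 1\}$ one has the pointwise inequality $u^{r+s}\le u^{p-1+s}$ with \emph{no} additive constant (this is where $r\le p-1$ is used), so the iteration stays homogeneous and no volume term ever appears; the price is a boundary integral over $\{u=1\}$ coming from integration by parts, which has a favorable sign since the outward normal of $\{u\ge 1\}$ is $-\nabla u/|\nabla u|$ (so it can be discarded). One then concludes $\|u\chi\|_{L^{\infty}(B_{R/2})}\le C\|u\chi\|_{L^{p}(B_R)}\le C\|u\|_{L^{p}(B_R)}$, and since $u\le\max\{1,u\chi\}$ pointwise this gives exactly $\sup_{B_{R/2}}u\le\max\{1,\,C\|u\|_{L^{p}(B_R)}\}$. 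Your remaining technical remarks (weak formulation, truncation in $u$, bookkeeping of $C_k^{1/s_k}$) are fine and standard.
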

	
	The gradient estimate in \thmref{t4} requires $u\in  L^k(B_R(o))$ for some $k>2rn$. Combining \lemref{c0estimate} and \thmref{t5}, we can weaken the condition $u\in  L^k(B_R(o))$ to $u\in  L^p(B_R(o))$ for the gradient estimate for positive solutions of equation \eqref{equ0}.
	
	\begin{thm}\label{thm17}
		Let $(M,g)$ be a complete Riemannian manifold satisfying $\mathrm{Ric}_g\geq-(n-1)\kappa g$ for some constant $\kappa\geq 0$. Suppose that the Sobolev inequality \eqref{lpsobolev} holds true on $M$, $u\in C^1(M)$ is a non-negative solution of \eqref{equ0} with $p\geq 2,\ q>p-1$ and $1\leq r\leq p-1$. If $u\in L^p(B_{R}(o))$, then there exist two constants
		$$ N = N\left(n,p,q,r, \|b\|_{W^{1,\infty}(B_{R}(o))}, \|u\|_{L^p(B_{R}(o))}\right)\quad\text{and}\quad
		C=C(n,p,q,r, R)$$
		with $C$ uniformly bounded as $R\to\infty$
		such that
		\begin{align}
			\sup_{B_{\frac{R}{4}}(o)}|\nabla u|\leq\max\left\{1,\, C\left[ \left(\frac{1+\sqrt{\kappa}R}{R}\right)^{\frac{1}{q-p+1}}+N \right] \right\}
		\end{align}
		where $V = {\rm{Vol} }(B_R(o))$ is the volume of the geodesic ball $B_R(o)$.
	\end{thm}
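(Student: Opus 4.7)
The plan is to chain the two preceding results: use \propref{c0estimate} to upgrade the assumed $L^p$-integrability of $u$ into an $L^\infty$ bound on a slightly smaller ball, and then feed this bound into \thmref{t5} to obtain the gradient estimate on a yet smaller ball. The hypotheses $p\ge 2$, $1\le r\le p-1$, $q>p-1$ are exactly what is required so that both results apply; the Sobolev inequality \eqref{lpsobolev} implicitly enforces $p<n$, matching the parameter range of \propref{c0estimate}.

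First, since $u\ge 0$ satisfies $\Delta_p u=|\nabla u|^q-b(x)u^r$ and $|\nabla u|^q\ge 0$, one has pointwise
\[
\Delta_p u \;\ge\; -\,\|b\|_{L^\infty(B_R(o))}\,u^{r}.
\]
With $0<r\le p-1$ and $1<p<n$, \propref{c0estimate} applies on $B_R(o)$ and yields
\[
\|u\|_{L^\infty(B_{R/2}(o))}\;\le\;\max\!\bigl\{1,\,C_1\|u\|_{L^p(B_R(o))}\bigr\},
\]
where $C_1$ depends on $n,p,r$, the global Sobolev constant of \eqref{lpsobolev}, and $\|b\|_{L^\infty(B_R(o))}$. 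In particular $u$ is now known to be bounded on $B_{R/2}(o)$.

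Because $p\ge 2$ forces $\max\{1,p-1\}=p-1<q$ and $r\ge 1$, the parameter conditions \eqref{condi0} of \thmref{t5} are satisfied. Applying \thmref{t5} on $B_{R/2}(o)$ (that is, with $R$ replaced by $R/2$) produces
\[
\sup_{B_{R/4}(o)}|\nabla u|\;\le\;\max\!\Bigl\{1,\,C_2\Bigl[\Bigl(\tfrac{1+\sqrt{\kappa}R/2}{R/2}\Bigr)^{\!1/(q-p+1)}+N_0\Bigr]\Bigr\},
\]
with $C_2=C_2(n,p,q,r)$ and
\[
N_0=\bigl(\|b\|_{W^{1,\infty}(B_{R/2}(o))}^{2}+\|b\|_{W^{1,\infty}(B_{R/2}(o))}^{2}\|u\|_{L^\infty(B_{R/2}(o))}^{2r}\bigr)^{\!1/(2(q-p+1))}.
\]
Substituting the $L^\infty$ bound of the previous step into $N_0$ replaces $\|u\|_{L^\infty(B_{R/2}(o))}^{2r}$ by a function of $\|u\|_{L^p(B_R(o))}$ and $\|b\|_{L^\infty(B_R(o))}$, which yields the advertised $N=N(n,p,q,r,\|b\|_{W^{1,\infty}(B_R(o))},\|u\|_{L^p(B_R(o))})$. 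The elementary inequality $(1+\sqrt{\kappa}R/2)/(R/2)\le 2(1+\sqrt{\kappa}R)/R$ converts the first term to the form displayed in the statement and the factor $2$ is absorbed into a constant $C=C(n,p,q,r,R)$.

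The main obstacle is the bookkeeping in the first step: one must verify that the Moser-iteration constant $C_1$ produced by \propref{c0estimate}, which involves cutoff factors on the annulus $B_R\setminus B_{R/2}$ scaling like $R^{-p}$, can be controlled so that the resulting constant $C$ in the final estimate remains uniformly bounded as $R\to\infty$. This rests on the global character of the Sobolev inequality \eqref{lpsobolev}: the iteration never requires a local (and hence $R$-dependent) Sobolev constant, and the $R^{-p}$ cutoff factor is dominated at the end by $\|u\|_{L^p(B_R(o))}^{p}$. Once this point is settled, the combination of \propref{c0estimate} and \thmref{t5} is routine.
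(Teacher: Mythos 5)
Your proposal is correct and is essentially identical to the paper's own argument: the paper also derives the differential inequality $\Delta_p u\ge -\|b\|_\infty u^r$, invokes \propref{c0estimate} to get $\sup_{B_{R/2}(o)}|u|\le C(n,p,q,r,\|u\|_{L^p(B_R(o))})$, and then applies \propref{lem5.1} (i.e.\ \thmref{t5}) on the ball $B_{R/2}(o)$ to conclude on $B_{R/4}(o)$. Your additional remarks on the uniform boundedness of the constant as $R\to\infty$ match the paper's observation following the proof of \propref{c0estimate}.
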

	
	The paper is organized as follows. In Section 2 we give some fundamental definitions and notations used in this paper, establish several important lemmas on $|\nabla u|^2$, and recall the Saloff-Coste Sobolev inequality on a Riemannian manifold. In Section \ref{sect3} we give the proof of \thmref{t1}. We show \thmref{t4} and \thmref{t5} in Section 4 and discuss the gradient estimates on positive solutions in Section 5.

	\section{Preliminaries}
	Throughout this paper, let $(M,g)$ be an $n$-dimensional Riemannian manifold with $\mathrm{Ric}_g\geq -(n-1)\kappa$ for some constant $\kappa\geq0$ and $\nabla$ be the corresponding Levi-Civita connection. For any function $\varphi\in C^1(M)$, we denote $\nabla \varphi\in \Gamma(T^*M)$ by $\nabla \varphi(X)=\nabla_X\varphi$. Denote usually the volume form by $d\vol=\sqrt{\det(g_{ij})}d x_1\wedge\ldots\wedge dx_n$ where $(x_1,\ldots, x_n)$ is a local coordinate chart, and for simplicity, we may omit the volume form in integration over $M$.
	
	For $p>1$, the $p$-Laplace operator is defined by
	$$
	\Delta_p\varphi:=\di(|\nabla \varphi|^{p-2}\nabla \varphi).
	$$
	The solution $\varphi$ of the equation $\Delta_p\varphi=0$ is the critical point of the energy functional
	$$
	E(\varphi)=\int_M|\nabla \varphi|^p.
	$$

	\begin{defn}\label{def1}
		A function $u$ is said to be a (weak) solution of equation \eqref{equ0} if $u\in C^1(M)$ and for all $\psi\in C_0^\infty(M)$, we have
		\begin{align*}
			-\int_M|\nabla u|^{p-2}\la\nabla u,\nabla\psi\ra-\int_M|\nabla u|^q\psi+\int_Mb(x)|u|^{r-1}u\psi=0.
		\end{align*}
	\end{defn}
	It is worth mentioning that any solution $u$ of equation (\ref{equ0}) satisfies $u\in W^{2,2}_{loc}(\Omega)$ and $u\in C^{1,\alpha}(\Omega)$ for some $\alpha\in(0,1)$(for example, see \cite{MR0709038, MR0727034,MR0474389}). Moreover, away from $\{\nabla u=0\}$, $u$ is in fact smooth.
	\medskip
	
	In our proof of gradient estimates of the solution to \eqref{equ0}, the following Sobolev inequality due to Saloff-Coste \cite{saloff1992uniformly} plays an important role.
	\begin{lem}[Saloff-Coste]\label{salof}
		Let $(M,g)$ be a complete manifold with $\mathrm{Ric}\geq-(n-1)\kappa$. For $n>2$, there exists a constant $c_0$ depending only on $n$, such that for all ball $B\subset M$ of radius R and volume $V$ we have
		$$
		\|f\|_{L^{\frac{2n}{n-2}}(B)}^2\leq e^{c_0(1+\sqrt{\kappa}R)}V^{-\frac{2}{n}}R^2\left(\int_B|\nabla f|^2+R^{-2}f^2\right)
		$$
		for any $f\in C^{\infty}_0(B)$. For $n=2$, the above inequality holds with $n$ replaced by any fixed $n'>2$.
	\end{lem}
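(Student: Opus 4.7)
The plan is to derive the stated inequality from two classical consequences of the Ricci lower bound and combine them via a Moser--Nash-type iteration, tracking the exponential factor in $\sqrt{\kappa}R$ carefully through every step.

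First, I would invoke the Bishop--Gromov volume comparison theorem, which only uses $\mathrm{Ric}\ge -(n-1)\kappa$ via Laplacian comparison for distance functions: for any $x\in B$ and $0<s\le r\le 2R$,
\[
\frac{\vol(B(x,r))}{\vol(B(x,s))}\le \left(\frac{r}{s}\right)^{n}\exp\bigl(C(n)\sqrt{\kappa}\,r\bigr),
\]
which in particular gives volume doubling at scale $R$ with constant at most $C(n)\exp(C(n)\sqrt{\kappa}R)$. Second, using the segment inequality (a Cheeger--Colding-style consequence of Laplacian comparison, or equivalently Buser's inequality for balls), I would establish the $L^{2}$ Poincar\'e inequality: for every $f\in C^{\infty}(B(x,r))$,
\[
\int_{B(x,r)}(f-f_{B})^{2}\le C(n)\exp\bigl(C(n)\sqrt{\kappa}R\bigr)\,r^{2}\int_{B(x,r)}|\nabla f|^{2}.
\]
Neither step requires any sectional curvature bound, which is the whole point of passing through the Moser route.

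The bulk of the work is converting doubling plus $L^{2}$-Poincar\'e into the scale-invariant $L^{2n/(n-2)}$-Sobolev inequality for functions in $C^{\infty}_{0}(B)$. A clean route is the heat-kernel / Faber--Krahn formalism: by the Grigor'yan--Saloff--Coste equivalence, doubling plus Poincar\'e yields an on-diagonal upper bound for the Dirichlet heat kernel on $B$ of the form
\[
p_{t}^{B}(x,x)\le C V^{-1}\exp\bigl(c_{0}(1+\sqrt{\kappa}R)\bigr)\bigl(R^{2}/t\bigr)^{n/2},\qquad 0<t\le R^{2},
\]
and by Varopoulos's $L^{1}\to L^{\infty}$ / $L^{2}\to L^{2n/(n-2)}$ duality for symmetric semigroups (Nash's method), such an on-diagonal bound is equivalent to the stated Sobolev inequality with exactly the claimed shape of constants: the $V^{-2/n}$ comes from the volume normalization, the $R^{2}$ from parabolic rescaling to unit time, the lower-order term $R^{-2}f^{2}$ from the shift to a positive spectral parameter on $B$, and the exponential prefactor from combining the Bishop--Gromov and Poincar\'e contributions. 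Alternatively, a direct Moser iteration on truncations of $f$ at dyadic scales, propagating $L^{2}$-gains by Poincar\'e and summing the arising volume factors geometrically by doubling, yields the same inequality.

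For $n=2$, the stated extension to any $n'>2$ is immediate by the product trick: apply the higher-dimensional version on $M\times N$, where $N$ is a small closed manifold of dimension $n'-2$ with bounded geometry and uniformly bounded Ricci, to functions depending only on the $M$-factor, then integrate out the $N$-direction. The principal technical obstacle is not the structure of the argument but rather the careful bookkeeping of the exponential factor: Bishop--Gromov contributes one $\exp(C\sqrt{\kappa}R)$, the segment/Poincar\'e inequality another, and the Moser--Nash iteration a third, and one must check that all these combine into a single $\exp(c_{0}(1+\sqrt{\kappa}R))$ with a purely dimensional exponent $c_{0}=c_{0}(n)$ rather than compounding into a worse bound.
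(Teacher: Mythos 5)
The paper offers no proof of this lemma: it is quoted verbatim from Saloff-Coste \cite{saloff1992uniformly}, so the only comparison available is with the argument in that reference. Your outline --- Bishop--Gromov doubling plus a Buser-type $L^2$ Neumann--Poincar\'e inequality on balls, upgraded to the scale-invariant $L^{2n/(n-2)}$ inequality by Nash/Moser/Faber--Krahn machinery, with the factor $e^{c_0(1+\sqrt{\kappa}R)}$ traced back to the two comparison-geometry inputs --- is the standard route and matches the circle of ideas in the cited source (which obtains the inequality from heat-kernel/Nash-type estimates available under a Ricci lower bound; the direct implication ``doubling $+$ Poincar\'e $\Rightarrow$ local Sobolev'' is also due to Saloff-Coste). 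You also correctly identify that only the Ricci lower bound enters, which is the point of the lemma for this paper.

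Two steps need repair before this is a proof. First, you cannot invoke the Grigor'yan--Saloff-Coste equivalence (doubling $+$ Poincar\'e $\Leftrightarrow$ parabolic Harnack $\Leftrightarrow$ two-sided heat kernel bounds) as a black box: the standard proofs of the implication you need go through a Faber--Krahn/Nash inequality on balls extracted from doubling and Poincar\'e, i.e.\ through a form of the very statement being proved, so quoting that equivalence is circular. You must either prove the direct implication ``doubling $+$ Poincar\'e $\Rightarrow$ Faber--Krahn/Nash on balls'' (and then run Nash's argument, keeping the constants explicit), or obtain the on-diagonal heat-kernel bound independently from the Li--Yau estimate; the ``careful bookkeeping'' you defer is exactly the content of the lemma, since $c_0$ must depend on $n$ alone, uniformly in $\kappa$ and $R$. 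Second, the $n=2$ case via a product $M\times N$ with a \emph{fixed} closed factor $N$ does not give a uniform constant: the radius of the product ball containing $\mathrm{supp}\, f\times N$ is $R+\mathrm{diam}(N)$, which produces an extra factor $e^{c_0\sqrt{\kappa}\,\mathrm{diam}(N)}$ that is unbounded as $\kappa\to\infty$ (and the volume bookkeeping between the product ball and $V\cdot\vol(N)$ must also be checked, since $B\times N$ is not a ball). This can be patched by scaling $N$ (say a flat torus of diameter comparable to $R$), but the simpler and standard observation --- the one behind the phrasing in \cite{saloff1992uniformly} --- is that the dimension enters the iteration only through the relative volume lower bound $\vol(B(x,s))/\vol(B(x,r))\geq c(n)e^{-C(n)\sqrt{\kappa}r}(s/r)^{n}$ for $s\leq r$, which remains valid with $n$ replaced by any $n'>n$; hence the whole proof runs verbatim with exponent $n'$ and no auxiliary manifold is needed.
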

	\begin{rem}
		For any open region $\Omega\subset M$, if there exists some geodesic ball $B $ such that $\Omega\subset B$, we also have
		$$
		\|f\|_{L^{\frac{2n}{n-2}}(\Omega)}^2\leq e^{c_0(1+\sqrt{\kappa}R)}V^{-\frac{2}{n}}R^2\left(\int_\Omega|\nabla f|^2+R^{-2}f^2\right),
		$$
		for any $f\in W^{1, 2}(\Omega)$. This can be seen from the fact that we choose $\{f_n\}\subset C^\infty_0(\Omega)\subset C^{\infty}_0(B)$ such that  $f_n\to f$  in $W^{1,2}(\Omega)$.
	\end{rem}
	
	For $p>1$, we consider the linearized operator $\Lie$ of $p$-Laplace operator at $u$,
	\begin{align*}
		\Lie(\psi)=\di\left(f^{\frac{p}{2}-1}\nabla \psi+(p-2)f^{\frac{p}{2}-2}\la\nabla \psi,\nabla u\ra\nabla u\right),
	\end{align*}
	where $f = |\nabla u|^2$. We need to prove the following identity.
	
	\begin{lem}\label{lem2.3}
		Let $p>1$ and $r\geq 1$ in \eqref{equ0}. If $u$ is a solution of \eqref{equ0} and $f=|\nabla u|^2$, then the equality
		\begin{align}\label{equ5}
			\begin{split}
				\mathcal L(f)=&\ \left(\frac{p}{2}-1\right)f^{\frac{p}{2}-2} |\nabla f|^2+2f^{\frac{p}{2}-1}(\ric(\nabla u,\nabla u)+|\nabla\nabla u|^2)\\&+qf^{\frac{q}{2}-1}\la\nabla f,\nabla u\ra-2\la\nabla(b|u|^{r-1}u), \nabla u\ra
			\end{split}
		\end{align}
		holds point-wisely in $\{x:f(x)>0\}$.
	\end{lem}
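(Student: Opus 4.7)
The identity \eqref{equ5} is essentially a weighted Bochner--Weitzenböck formula for the linearized $p$-Laplace operator $\Lie$ applied to $|\nabla u|^2$, together with the equation \eqref{equ0} used to eliminate third-order derivatives of $u$. Since the claim is asserted pointwise on $\{f>0\}$, where standard elliptic regularity makes $u$ smooth, all derivatives below are classical.

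The plan is as follows. I would first put $\Delta_p u$ into non-divergence form by expanding $\Delta_p u=\di(f^{(p-2)/2}\nabla u)$ with $f=|\nabla u|^2$, which yields
$$\Delta_p u = f^{\frac{p-2}{2}}\Delta u + \frac{p-2}{2}f^{\frac{p-4}{2}}\la\nabla f,\nabla u\ra.$$
I would then expand $\Lie(f)$ directly by the product rule; writing $h=\la\nabla f,\nabla u\ra$ for brevity, a short calculation gives
\begin{align*}
\Lie(f) &= f^{\frac{p-2}{2}}\Delta f + \tfrac{p-2}{2}f^{\frac{p-4}{2}}|\nabla f|^2 + \tfrac{(p-2)(p-4)}{2}f^{\frac{p-6}{2}}h^2 \\
&\quad + (p-2)f^{\frac{p-4}{2}}\la\nabla h,\nabla u\ra + (p-2)f^{\frac{p-4}{2}}h\,\Delta u.
\end{align*}
Next I would apply the classical Bochner formula $\tfrac12\Delta f = |\nabla\nabla u|^2 + \ric(\nabla u,\nabla u) + \la\nabla u,\nabla\Delta u\ra$ to rewrite the $f^{(p-2)/2}\Delta f$ contribution.

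The key step is to recognise that the remaining terms reassemble into $2\la\nabla\Delta_p u,\nabla u\ra$. Indeed, directly differentiating the non-divergence expression for $\Delta_p u$ and contracting with $\nabla u$ produces precisely
\begin{align*}
2\la\nabla\Delta_p u,\nabla u\ra ={} & 2f^{\frac{p-2}{2}}\la\nabla u,\nabla\Delta u\ra + (p-2)f^{\frac{p-4}{2}}h\,\Delta u \\
&{}+ \tfrac{(p-2)(p-4)}{2}f^{\frac{p-6}{2}}h^2 + (p-2)f^{\frac{p-4}{2}}\la\nabla h,\nabla u\ra,
\end{align*}
matching the residual terms from the expansion above term-by-term. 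This yields the clean weighted-Bochner identity
$$\Lie(f) = 2f^{\frac{p-2}{2}}\bigl(|\nabla\nabla u|^2 + \ric(\nabla u,\nabla u)\bigr) + \bigl(\tfrac{p}{2}-1\bigr)f^{\frac{p-4}{2}}|\nabla f|^2 + 2\la\nabla\Delta_p u,\nabla u\ra.$$
Finally, differentiating the PDE in the form $\Delta_p u = f^{q/2} - b|u|^{r-1}u$ and pairing with $\nabla u$ produces $2\la\nabla\Delta_p u,\nabla u\ra = qf^{(q-2)/2}\la\nabla f,\nabla u\ra - 2\la\nabla(b|u|^{r-1}u),\nabla u\ra$, whose substitution delivers \eqref{equ5}.

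The main obstacle is purely bookkeeping: the coefficients $(p-2),(p-4)$ and the powers $f^{(p-2)/2},f^{(p-4)/2},f^{(p-6)/2}$ have to line up exactly between the expansion of $\Lie(f)$ and that of $2\la\nabla\Delta_p u,\nabla u\ra$. No curvature or convexity assumption beyond what the standard Bochner formula already uses is needed, and the hypothesis $r\ge 1$ enters only to guarantee that $|u|^{r-1}u$ is $C^1$ in $u$, so that $\nabla(b|u|^{r-1}u)$ is classically defined even where $u$ vanishes.
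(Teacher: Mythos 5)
Your proposal is correct; every coefficient and power of $f$ in your expansion checks out, and the final substitution of the equation reproduces \eqref{equ5}. The underlying computation is the same as the paper's (product rule for the divergence, Bochner formula, then the PDE), but your decomposition is organized differently. The paper splits $\Lie(f)$ into its two divergence summands, applies Bochner to $\di(f^{\frac p2-1}\nabla f)$, and handles the second summand by inserting the equation \emph{early} — once to replace $\di(f^{\frac p2-1}\nabla u)$ by $f^{\frac q2}-b|u|^{r-1}u$ and once to replace $(p-2)f^{-1}\la\nabla f,\nabla u\ra$ by $2\bigl(f^{\frac{q-p}{2}+1}-b|u|^{r-1}uf^{1-\frac p2}-\Delta u\bigr)$ inside the gradient — so that the third-order terms $\pm2f^{\frac p2-1}\la\nabla\Delta u,\nabla u\ra$ cancel between the two pieces. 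You instead expand everything in non-divergence form and first isolate the PDE-independent identity $\Lie(f)=2f^{\frac p2-1}\bigl(|\nabla\nabla u|^2+\ric(\nabla u,\nabla u)\bigr)+\bigl(\tfrac p2-1\bigr)f^{\frac p2-2}|\nabla f|^2+2\la\nabla\Delta_pu,\nabla u\ra$, inserting the equation only in the last line. This buys a reusable weighted Bochner formula valid for any smooth $u$ with $\nabla u\neq0$ and makes the cancellation structure transparent, at the cost of carrying the $f^{\frac{p-6}{2}}h^2$ and $\la\nabla h,\nabla u\ra$ terms explicitly; the paper's route avoids those terms but entangles the equation with the differential identity. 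Your closing observation that $r\geq1$ is needed only so that $\nabla(b|u|^{r-1}u)$ is classically defined matches Remark \ref{21} of the paper.
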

	
	\begin{proof}
		By direct computations we have
		\begin{align*}
			& \di(f^{\frac{p}{2}-1} \nabla f)\\
			=&\ \left(\frac{p}{2}-1\right) f^{\frac{p}{2}-2} |\nabla f|^2 + f^{\frac{p}{2}-1} \Delta f\\
			=&\ \left(\frac{p}{2}-1\right) f^{\frac{p}{2}-2} |\nabla f|^2 + 2f^{\frac{p}{2}-1}\left(\la\nabla\Delta u,\nabla u\ra+\ric(\nabla u,\nabla u)+|\nabla\nabla u|^2\right)
		\end{align*}
		and
		\begin{align*}
			&\di\left((p-2)f^{\frac{p}{2}-2}\la\nabla f,\nabla u\ra\nabla u\right)
			\\
			=&\ (p-2)f^{-1}\la\nabla f,\nabla u\ra\di (f^{\frac{p}{2}-1}\nabla u)+(p-2)\la\nabla(f^{-1}\la\nabla f,\nabla u\ra), f^{\frac{p}{2}-1}\nabla u\ra
			\\
			=&\ (p-2)(f^{\frac{q}{2}}-bu^r)f^{-1}\la\nabla f,\nabla u\ra +2\la\nabla(f^{\frac{q-p}{2}+1}-b(x)|u|^{r-1}uf^{1-\frac{p}{2}}
			-\Delta u), f^{\frac{p}{2}-1}\nabla u\ra
			\\
			=&\ qf^{\frac{q}{2}-1}\la\nabla f,\nabla u\ra-2f^{\frac{p}{2}-1}\la\nabla\Delta u, \nabla u\ra-2\la\nabla(b|u|^{r-1}u), \nabla u\ra.
		\end{align*}
		It is not difficult to obtain \eqref{equ5} from the above identities.
	\end{proof}
	
	Furthermore, we need to show the following point-wise estimate for $\mathcal L(f)$,
	\begin{lem}\label{linear}
		If $u$ is a solution of equation \eqref{equ0} with $p>1$ and $r\geq 1$, then we have the following point-wise estimate on $\{f\neq 0\}$ for $\mathcal L(f)$,
		\begin{align}\label{basiclm}
			\begin{split}
				\mathcal L(f)\geq &-\left|q-\frac{2(p-1)}{n-1}\right|f^{\frac{q-1}{2} }|\nabla f|-2(n-1)\kappa f^{\frac{p}{2}}
				+\frac{2f^{q-\frac{p}{2}+1}  }{n-1}\\
				&-2\la\nabla(b|u|^{r-1}u), \nabla u\ra-\frac{ 4b|u|^{r-1}uf^{\frac{2-p+q}{2}}}{n-1}.
			\end{split}
		\end{align}
	\end{lem}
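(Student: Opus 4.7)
The strategy is to start from the Bochner-type identity in \lemref{lem2.3}, apply the Ricci curvature lower bound together with a refined Kato inequality to control $|\nabla^2 u|^2$, substitute the equation to rewrite everything in terms of $h:=\la\nabla f,\nabla u\ra$ and $\Delta_p u$, and then absorb the unwanted residual terms by Young's and Cauchy--Schwarz inequalities.

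At any point where $f>0$, I would pick an orthonormal frame $\{e_i\}$ with $e_1=\nabla u/|\nabla u|$, so that $u_{11}=h/(2f)$ and $\sum_i u_{1i}^2=|\nabla f|^2/(4f)$. The Cauchy--Schwarz bound $\sum_{j\ge 2}u_{jj}^2\ge(\Delta u-u_{11})^2/(n-1)$, combined with these frame identities, delivers the refined Kato inequality
\begin{align*}
|\nabla^2 u|^2\ge \frac{(\Delta u)^2}{n-1}-\frac{(\Delta u)\,h}{(n-1)f}+\frac{|\nabla f|^2}{2f}-\frac{(n-2)\,h^2}{4(n-1)f^2}.
\end{align*}
Together with $\ric(\nabla u,\nabla u)\ge-(n-1)\kappa f$, this furnishes a lower bound for the term $2f^{p/2-1}(\ric(\nabla u,\nabla u)+|\nabla^2 u|^2)$ appearing in \eqref{equ5}.

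Next I would invoke the identity $\Delta u=f^{1-p/2}\Delta_p u-(p-2)h/(2f)$, obtained by expanding $\Delta_p u=\di(|\nabla u|^{p-2}\nabla u)$, together with the algebraic grouping
\begin{align*}
\Delta u\,(\Delta u-h/f)=f^{2-p}(\Delta_p u)^2-(p-1)\,h\,f^{-p/2}\Delta_p u+\frac{p(p-2)\,h^2}{4f^2}.
\end{align*}
Plugging in $\Delta_p u=f^{q/2}-b|u|^{r-1}u$ and collecting contributions, $\mathcal L(f)$ is bounded below by the five target terms of \eqref{basiclm} plus the four residuals
\begin{align*}
E_1&=\tfrac{2(b|u|^{r-1}u)^2 f^{1-p/2}}{n-1},\qquad E_2=\tfrac{2(p-1)\,h\,b|u|^{r-1}u\,f^{-1}}{n-1},\\
E_3&=\tfrac{((p-1)^2-(n-1))\,h^2 f^{p/2-3}}{2(n-1)},\qquad E_4=\tfrac{p}{2}\,f^{p/2-2}|\nabla f|^2,
\end{align*}
together with the linear-in-$h$ contribution $[q-2(p-1)/(n-1)]\,h\,f^{(q-2)/2}$.

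The main obstacle is to eliminate the sign-indefinite cross term $E_2$. My plan is to apply Young's inequality $|xy|\le\tfrac{\epsilon}{2}x^2+\tfrac{1}{2\epsilon}y^2$ with $x=h\,f^{(p-6)/4}$ and $y=b|u|^{r-1}u\,f^{(2-p)/4}$, and the crucial choice $\epsilon=(p-1)/2$; this is engineered so that the resulting $(b|u|^{r-1}u)^2 f^{1-p/2}$ contribution exactly cancels $E_1$, while its $h^2 f^{p/2-3}$ contribution combines with $E_3$ to yield precisely $-\tfrac{1}{2}h^2 f^{p/2-3}$. Applying Cauchy--Schwarz $h^2\le f|\nabla f|^2$ converts the latter into $-\tfrac{1}{2}f^{p/2-2}|\nabla f|^2$, which merges with $E_4$ into $\tfrac{p-1}{2}f^{p/2-2}|\nabla f|^2\ge 0$; since $p>1$ this is non-negative and may be dropped. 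Finally, Cauchy--Schwarz $|h|\le f^{1/2}|\nabla f|$ estimates the linear-in-$h$ term from below by $-|q-2(p-1)/(n-1)|\,f^{(q-1)/2}|\nabla f|$, producing exactly the inequality \eqref{basiclm}.
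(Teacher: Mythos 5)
Your proof is correct and follows essentially the same route as the paper: the adapted frame with $e_1=\nabla u/|\nabla u|$, the Cauchy--Schwarz bound on $\sum_{j\ge 2}u_{jj}^2$, and substitution of the equation into the identity of \lemref{lem2.3} are exactly the paper's steps, only with more explicit bookkeeping. The one cosmetic difference is that your Young-inequality step with $\epsilon=(p-1)/2$, which cancels $E_1$ and absorbs part of $E_3$, amounts precisely to the non-negativity of $\frac{2}{n-1}f^{\frac{p}{2}-1}\bigl(b|u|^{r-1}uf^{1-\frac{p}{2}}+(p-1)u_{11}\bigr)^2$, i.e.\ the perfect square that the paper simply discards in one line when estimating $\frac{1}{n-1}\bigl(\sum_{i\ge 2}u_{ii}\bigr)^2$.
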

	
	\begin{proof}
		Let $\{e_1,e_2,\cdots,e_n\}$ be a local orthonormal frame of $TM$ in a domain with $f\neq 0$ such that $e_1=\frac{\nabla u}{|\nabla u|}$. Then we have $u_1 = f^{\frac{1}{2}}$ and
		\begin{align}\label{add}
			u_{11} = \frac{1}{2}f^{-\frac{1}{2}}f_1 = \frac{1}{2}f^{-1}\la\nabla u,\nabla f\ra.
		\end{align}
		The $p$-Laplace operator can be expressed in terms of $f$,
		\begin{align}\label{equ:2.2}
			\Delta_p u
			=&f^{\frac{p}{2}-1}\left((p-1)u_{11}+\sum_{i=2}^nu_{ii}\right).
		\end{align}
		Substituting \eqref{equ:2.2} into equation \eqref{equ0}, we obtain
		\begin{align}\label{equ2}
			(p-1)u_{11}+\sum_{i=2}^nu_{ii}=f^{\frac{q-p}{2}+1}-b|u|^{r-1}uf^{1-\frac{p}{2}}.
		\end{align}
		Using the fact $u_1 = f^{\frac{1}{2}}$ again, we have
		
		\begin{align}\label{equ2.5}
			\frac{|\nabla f|^2}{4f}=\sum_{i=1}^n u_{1i}^2 .
		\end{align}		
		By equality \eqref{equ2.5} and the Cauchy inequality, we estimate the Hessian of $u$ by
		\begin{align}\label{equ2.6}
			|\nabla\nabla u|^2
			\geq& \sum_{i=1}^n u_{1i}^2+\sum_{i=2}^nu_{ii}^2
			\geq \frac{|\nabla f|^2}{4f}+\frac{1}{n-1}\left(\sum_{i=2}^n u_{ii}\right)^2.
		\end{align}
		In view of \eqref{equ2}, we have
		\begin{align}\label{equ2.7}
			\begin{split}
				\frac{1}{n-1}\left(\sum_{i=2}^n u_{ii}\right)^2 = \frac{1}{n-1}
				& \left(
				f^{\frac{q-p}{2}+1}-b|u|^{r-1}uf^{1-\frac{p}{2}}
				-(p-1)u_{11}\right)^2\\
				\geq
				\frac{1}{n-1}
				& \left(
				f^{q-p+2}-2b|u|^{r-1}uf^{2+\frac{q}{2}-p}
				-2(p-1)u_{11}f^{\frac{q-p}{2}+1}
				\right),
			\end{split}
		\end{align}
		where we omit the square term $\left(b|u|^{r-1}uf^{1-\frac{p}{2}} +(p-1)u_{11} \right)^2.$
		Now, substituting \eqref{equ2.6}, \eqref{equ2.7} and Ricci condition $\mathrm{Ric}\geq -(n-1)\kappa g$ into \eqref{equ5}, we have
		\begin{align*}
			\mathcal L(f)
			\geq &\ \frac{p-1}{2} f^{\frac{p}{2}-2} |\nabla f|^2
			-2(n-1)\kappa f^{\frac{p}{2} }+2\left(q-\frac{2(p-1)}{n-1}\right)f^{\frac{q}{2} }u_{11}
			\\
			&-2\la\nabla(b|u|^{r-1}u), \nabla u\ra
			+\frac{2f^{\frac{p}{2}-1} }{n-1} \Big(f^{q-p+2}-2b|u|^{r-1}uf^{2+\frac{q}{2}-p} \Big).
		\end{align*}
		Omitting the non-negative term $\frac{p-1}{2} f^{\frac{p}{2}-2} |\nabla f|^2$ and using \eqref{add}, we deduce from the above inequality that
		\begin{align*}
			\mathcal L(f)
			\geq &
			-2(n-1)\kappa f^{\frac{p}{2} }-\left|q-\frac{2(p-1)}{n-1}\right|f^{\frac{q-1}{2} }|\nabla f|
			-2\la\nabla(b|u|^{r-1}u), \nabla u\ra
			\\
			&
			+\frac{2f^{\frac{p}{2}-1} }{n-1} \left( f^{q-p+2}
			-2b|u|^{r-1}uf^{2+\frac{q}{2}-p} \right)   .
		\end{align*}
		This is the required estimate.
	\end{proof}
	
	\begin{rem}\label{21}
		The assumption $r\geq 1$ in \lemref{lem2.3} and \lemref{linear} can guarantee that there is no singularity due to $\la\nabla(b|u|^{r-1}u), \nabla u\ra$. If $b\equiv0$, then the estimate of $\mathcal L(f)$ is given by
		\begin{align}\label{equa2.9}
			\mathcal L(f)\geq &-\left|q-\frac{2(p-1)}{n-1}\right|f^{\frac{q-1}{2} }|\nabla f|-2(n-1)\kappa f^{\frac{p}{2}}+\frac{2f^{q-\frac{p}{2}+1}  }{n-1},
		\end{align}
		and the assumption $r\geq 1$ is no longer required.
	\end{rem}

	\section{Gradient estimates for solutions to $\Delta_pu-|\nabla u|^q=0$}\label{sect3}
	In this section, we consider the special case $b\equiv0$ of \eqref{equ0}, i.e., the Hamilton-Jacobi equation
	\begin{align}\label{b=0}
		\Delta_pu-|\nabla u|^q=0
	\end{align}
	with $p>1$ and $q>p-1$. We divide the proof of \thmref{t1} into three parts. In the first part, we derive a basic integral inequality of $f=|\nabla u|^2$, which will be used in the second and third parts. In the second part, we give an $L^{\alpha_1}$-estimate of $f$ on a geodesic ball with radius $3R/4$, where $L^{\alpha_1}$ norm of $f$ determines the initial state of the Nash-Moser iteration. Finally, we give a complete proof of our theorem using the Nash-Moser iteration method.
	
	From now on we always assume that $\Omega = B_R(o)\subset M$ is a geodesic ball, and we always use $a_i$ ($i=1,\, 2,\, 3,\, \cdots$) to denote some positive constants depending only on $n$, $p$, $q$ and $r$. We now prove the following integral inequality.
	\begin{lem}\label{31}	
		Let $u\in C^1(\Omega)$ be a solution to equation \eqref{b=0} with $p>1$ and $q>p- 1$, defined on $\Omega\subset M$. Then, there exist constants $a_1 = \min\{1, p-1\}$, $a_2=\left|q-\frac{2(p-1)}{n-1}\right|$ and $a_3$ such that, for any positive number $\alpha$ satisfying
		\begin{align}\label{cond1}
			\frac{a_2^2}{a_1\alpha}\leq\frac{1}{4(n-1)}
		\end{align}
		and any non-negative $\eta\in C^\infty_0(\Omega)$, there holds true
		\begin{align}\label{equa3.3}
			\begin{split}
				&\frac{4a_1\alpha}{(2\alpha+p)^2}\int_{\Omega}\left|\nabla(f^{\frac{\alpha}{2}+\frac{p}{4}}\eta )\right|^2
				+\frac{1}{n-1}\int_{\Omega}f^{\alpha+q-\frac{p}{2}+1}\eta^2
				\\
				\leq&
				\frac{a_3}{\alpha}\int_{\Omega}f^{\frac{p}{2}+\alpha}|\nabla\eta|^2
				+2(n-1)\kappa\int_{\Omega} f^{\frac{p}{2}+\alpha}\eta^2
				.
			\end{split}
		\end{align}
	\end{lem}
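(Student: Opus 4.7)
My plan is to test the pointwise inequality \eqref{equa2.9} (the $b\equiv 0$ case of \lemref{linear}) against the non-negative weight $\varphi = f^{\alpha}\eta^{2}$ on $\Omega$ and integrate by parts using the divergence form of $\mathcal{L}$. Since the integrands $f^{\frac{p}{2}+\alpha-2}|\nabla f|^{2}$, $f^{\frac{p}{2}+\alpha-1}|\nabla f|$, etc.\ extend continuously by zero to the singular set $\{f=0\}$ when $p>1$ and $\alpha>0$, the computation can be performed directly on $\{f>0\}$, where $u$ is smooth; a standard regularization, replacing $f$ by $f+\delta$ in the coefficients of $\mathcal{L}$ and letting $\delta\to 0$, removes any residual subtlety at the zero set.

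Using $\nabla\varphi = \alpha f^{\alpha-1}\eta^{2}\nabla f + 2f^{\alpha}\eta\nabla\eta$ together with the definition of $\mathcal{L}$, integration by parts expands $-\int_{\Omega}\mathcal{L}(f)\varphi$ into four pieces: two ``main'' quadratic-in-$\nabla f$ integrals, $\alpha\int f^{\frac{p}{2}+\alpha-2}|\nabla f|^{2}\eta^{2}$ and $\alpha(p-2)\int f^{\frac{p}{2}+\alpha-3}\la\nabla f,\nabla u\ra^{2}\eta^{2}$, and two ``cross'' integrals that are linear in both $\nabla f$ and $\nabla\eta$. The pointwise estimate $\la\nabla f,\nabla u\ra^{2}\leq f|\nabla f|^{2}$ combines the two main pieces, after splitting into the cases $p\geq 2$ and $1<p<2$, into a single non-negative integral of coefficient at least $a_{1}\alpha = \min\{1,p-1\}\alpha$. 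Each cross piece is bounded by $C(p)\int f^{\frac{p}{2}+\alpha-1}|\nabla f||\nabla\eta|\eta$, and a single Young inequality of the form $2C(p)f^{\frac{p}{2}+\alpha-1}|\nabla f||\nabla\eta|\eta \leq \epsilon\alpha f^{\frac{p}{2}+\alpha-2}|\nabla f|^{2}\eta^{2}+\frac{C'(p)}{\epsilon\alpha}f^{\frac{p}{2}+\alpha}|\nabla\eta|^{2}$ dispatches them, producing precisely the term $\frac{a_{3}}{\alpha}\int f^{\frac{p}{2}+\alpha}|\nabla\eta|^{2}$ appearing on the RHS of \eqref{equa3.3}.

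Pairing the resulting lower bound on $-\int \mathcal{L}(f)\varphi$ with the upper bound supplied by \eqref{equa2.9} yields, schematically, $c\alpha\int f^{\frac{p}{2}+\alpha-2}|\nabla f|^{2}\eta^{2}+\frac{2}{n-1}\int f^{\alpha+q-\frac{p}{2}+1}\eta^{2}\leq a_{2}\int f^{\frac{q-1}{2}+\alpha}|\nabla f|\eta^{2}+2(n-1)\kappa\int f^{\frac{p}{2}+\alpha}\eta^{2}+\frac{a_{3}}{\alpha}\int f^{\frac{p}{2}+\alpha}|\nabla\eta|^{2}$. To absorb the $a_{2}$-integral I would split its integrand as $X\cdot Y$, with $X = f^{\frac{p}{4}+\frac{\alpha}{2}-1}|\nabla f|\eta$ and $Y = f^{\frac{q}{2}-\frac{p}{4}+\frac{\alpha}{2}+\frac{1}{2}}\eta$, so that $X^{2}$ and $Y^{2}$ reproduce exactly the two integrands already on the LHS; Young's inequality then gives $a_{2}XY\leq \frac{a_{1}\alpha}{4}X^{2}+\frac{a_{2}^{2}}{a_{1}\alpha}Y^{2}$, and the hypothesis $\frac{a_{2}^{2}}{a_{1}\alpha}\leq \frac{1}{4(n-1)}$ is exactly what lets the $Y^{2}$ piece be absorbed into $\frac{2}{n-1}\int f^{\alpha+q-\frac{p}{2}+1}\eta^{2}$ with strictly positive margin.

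The last step converts the surviving integral $\int f^{\frac{p}{2}+\alpha-2}|\nabla f|^{2}\eta^{2}$ into $\frac{16}{(2\alpha+p)^{2}}\int |\nabla(f^{\alpha/2+p/4}\eta)|^{2}$ via the expansion $|\nabla(f^{\alpha/2+p/4}\eta)|^{2} = (\alpha/2+p/4)^{2}f^{\frac{p}{2}+\alpha-2}|\nabla f|^{2}\eta^{2} + 2(\alpha/2+p/4)f^{\alpha+\frac{p}{2}-1}\eta\la\nabla f,\nabla\eta\ra + f^{\alpha+\frac{p}{2}}|\nabla\eta|^{2}$, together with one further Young inequality applied to the middle cross term. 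The extra $\int f^{\frac{p}{2}+\alpha}|\nabla\eta|^{2}$ that this produces is absorbed back into the $\frac{a_{3}}{\alpha}$-term on the RHS, finishing the proof of \eqref{equa3.3}. I expect the main obstacle to be the constants bookkeeping: reaching the precise prefactor $\frac{4a_{1}\alpha}{(2\alpha+p)^{2}}$ and the coefficient $\frac{1}{n-1}$ stated in \eqref{equa3.3} forces the Young exponents in the several absorption steps to be balanced consistently, and the threshold $\frac{a_{2}^{2}}{a_{1}\alpha}\leq \frac{1}{4(n-1)}$ is exactly the level at which the whole scheme closes up.
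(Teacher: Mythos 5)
Your proposal is correct and follows essentially the same route as the paper: test the pointwise inequality \eqref{equa2.9} against $f^{\alpha}\eta^{2}$ (the paper uses $(f-\epsilon)^{+}$ rather than $f+\delta$ to handle the zero set, but this is immaterial), extract the coefficient $a_{1}\alpha$ from the two quadratic pieces, absorb the $a_{2}$-term by the same $XY$-splitting and Young inequality under the hypothesis \eqref{cond1}, and convert the surviving Dirichlet-type integral into $\int|\nabla(f^{\alpha/2+p/4}\eta)|^{2}$ (the paper uses $(a+b)^{2}\le 2(a^{2}+b^{2})$ where you use one more Young step, which is equivalent). The constants bookkeeping closes exactly as you anticipate.
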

	
	\begin{proof}
		By regular theorem, away from $\{f=0\}$, $u$ is smooth. So both sides of \eqref{equa2.9} are in fact smooth. Let $\epsilon>0$ and $\psi = f_\epsilon^{\alpha}\eta^2 $, where $f_\epsilon = (f-\epsilon)^+, \eta\in C^{\infty}_0(B_R(o))$ is non-negative, $\alpha>1$ which will be determined later. Multiply  $\psi $ to both sides of \eqref{equa2.9} and integrate over $\Omega$.
		
		Then direct computation yields
		\begin{align}\label{intineq123}
			\begin{split}
				&\int_{\Omega}\alpha f^{\frac{p}{2} -1}f^{ \alpha-1}_\epsilon\eta^2 \left(|\nabla f|^2+(p-2)f^{-1}\la\nabla u,\nabla f\ra^2\right)
				\\
				&+\int_{\Omega}2f^{\frac{p}{2} }f^{ \alpha-1}_\epsilon\eta \la \nabla f+(p-2)f^{-1}\la\nabla u,\nabla f\ra\nabla u, \nabla\eta\ra
				+\frac{2}{n-1}\int_{\Omega}f^{ q-\frac{p}{2}+1} f_\epsilon^{\alpha}\eta^2
				\\
				\leq&\
				a_2\int_{\Omega}f^{ \frac{q-1}{2} } f_\epsilon^{\alpha}|\nabla f|\eta^2
				+2(n-1)\kappa\int_{\Omega} f^{\frac{p}{2} } f_\epsilon^{\alpha}\eta^2 .
			\end{split}
		\end{align}
		Note that the two terms containing inner products in the inequality \eqref{intineq123} can be controlled as what follows,
		\begin{align*}
			|\nabla f|^2+(p-2)f^{-1}\la\nabla u,\nabla f\ra^2 \geq \min\{1, p-1\}|\nabla f|^2 = a_1 |\nabla f|^2,
		\end{align*}		
		\begin{align*}
			2  \la \nabla f+(p-2)f^{-1}\la\nabla u,\nabla f\ra\nabla u, \nabla\eta\ra\geq -2(p+1)  |\nabla f||\nabla\eta|.
		\end{align*}		
		Hence, in view of of the above two inequalities, we can derive from \eqref{intineq123} by letting $\epsilon\to 0^+$ that,
		\begin{align}\label{intineq2}
			\begin{split}
				&a_1\alpha\int_{\Omega}f^{\frac{p}{2}+\alpha-2}\eta^2 |\nabla f|^2 +\frac{2  }{n-1}\int_{\Omega}f^{\alpha+q-\frac{p}{2}+1}\eta^2\\
				\leq&\ a_2\int_{\Omega} f^{\alpha+\frac{q-1}{2}}|\nabla f|\eta^2 + 2(n-1)\kappa\int_{\Omega} f^{\frac{p}{2}+\alpha}\eta^2 +2(p+1)\int_{\Omega}f^{\frac{p}{2}+\alpha-1}\eta |\nabla f||\nabla\eta|.
			\end{split}
		\end{align}
		Let $R_i$ represent the $i$-th term on the right-hand side of \eqref{intineq2}. By Cauchy inequality, we have
		\begin{align*}
			R_1\leq&\ \frac{a_2^2}{a_1\alpha}\int_{\Omega}f^{\alpha+q-\frac{p}{2}+1}\eta^2 +\frac{a_1\alpha}{4}\int_{\Omega}f^{\frac{p}{2}+\alpha-2}\eta^2 |\nabla f|^2,\\
			R_3\leq &\ \frac{a_1\alpha}{4}\int_{\Omega}f^{\frac{p}{2}+\alpha-2}\eta^2 |\nabla f|^2+\frac{4(p+1)^2}{a_1\alpha}\int_{\Omega}f^{\frac{p}{2}+\alpha}|\nabla\eta|^2.
		\end{align*}
		
		By the above estimates and the choice of $\alpha$ such that
		$$
		\frac{a_2^2}{a_1\alpha}\leq\frac{1}{n-1},
		$$
		we can infer from \eqref{intineq2} that
		\begin{align}\label{equa:3.2}
			\begin{split}
				&\frac{a_1\alpha}{2}\int_{\Omega}f^{\frac{p}{2}+\alpha-2}\eta^2 |\nabla f|^2
				+\frac{1}{n-1}\int_{\Omega}f^{\alpha+q-\frac{p}{2}+1}\eta^2
				\\
				\leq&\
				2(n-1)\kappa\int_{\Omega} f^{\frac{p}{2}+\alpha}\eta^2
				+\frac{4(p+1)^2}{a_1\alpha}\int_{\Omega}f^{\frac{p}{2}+\alpha}|\nabla\eta|^2
				.
			\end{split}
		\end{align}
		
		On the other hand, by using the inequality $(a + b)^2 \leq 2(a^2 +b^2)$ we have
		\begin{align}\label{equa:3.3}
			\int_{\Omega}\left|\nabla\left(f^{\frac{\alpha}{2}+\frac{p}{4}}\eta \right)\right|^2\leq
			\frac{1}{2}\left(\frac{2\alpha+p}{2}\right)^2\int_{\Omega}f^{\alpha+\frac{p}{2}-2}|\nabla f|^2\eta^2
			+2\int_{\Omega}f^{\alpha+\frac{p}{2}}|\nabla\eta|^2 .
		\end{align}
		Immediately, it follows from \eqref{equa:3.2} and \eqref{equa:3.3} that
		\begin{align}\label{han}
			\begin{split}
				&\frac{4a_1\alpha}{(2\alpha+p)^2}\int_{\Omega}\left|\nabla\left(f^{\frac{\alpha}{2}+\frac{p}{4}}\eta \right)\right|^2
				+\frac{1}{n-1}\int_{\Omega}f^{\alpha+q-\frac{p}{2}+1}\eta^2
				\\
				\leq&
				\left(\frac{4(p+1)^2}{a_1\alpha}+\frac{8a_1\alpha}{(2\alpha+p)^2}\right)\int_{\Omega}f^{\frac{p}{2}+\alpha}|\nabla\eta|^2
				+2(n-1)\kappa\int_{\Omega} f^{\frac{p}{2}+\alpha}\eta^2
				.
			\end{split}
		\end{align}
		By choosing a suitable constant $a_3$ depending only on  $p$ such that 
		$$
		 \frac{4(p+1)^2}{a_1\alpha}+\frac{8a_1\alpha}{(2\alpha+p)^2} 
		\leq \frac{a_3}{\alpha},
		$$
		 we finish the proof of \lemref{31}.
	\end{proof}

	\subsection{ An integral inequality}\label{sect3.1}
	\begin{lem}
		Let $(M,g)$ be a complete Riemannian manifold satisfying $\mathrm{Ric}_g\geq-(n-1)\kappa g$ for some constant $\kappa\geq0$. Assume that $u$ is a $C^1$-solution to equation \eqref{b=0} and $f=|\nabla u|^2$. If $p>1$ and $q>p-1$, then there holds true
		\begin{align}\label{44}
			\begin{split}
				&e^{ -\alpha_0}V^{ \frac{2}{n}}\left(\int_{\Omega}f^{\frac{n}{n-2}(\frac{p}{2}+\alpha)}\eta^{\frac{2n}{n-2}}\right)^{\frac{n-2}{n}}
				+a_{6}\alpha R^{2}\int_{\Omega}f^{\alpha+q-\frac{p}{2}+1}\eta^2\\
				\leq& a_{7} \left[R^{2}\int_{\Omega}f^{\frac{p}{2}+\alpha}|\nabla\eta|^2+\alpha_0^2\alpha\int_{\Omega} f^{\frac{p}{2}+\alpha}\eta^2 \right].
			\end{split}
		\end{align}
		Here
		$$\alpha_0=(1+\sqrt{\kappa}R)\max\left\{c_0+1,\, \frac{4(n-1)a_2^2}{a_1 } \right\}.$$
	\end{lem}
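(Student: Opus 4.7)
The plan is to combine \lemref{31} with the Saloff-Coste Sobolev inequality \lemref{salof} applied to $g := f^{\alpha/2+p/4}\eta$, and then absorb all $\kappa$- and $c_0$-dependent factors into the single parameter $\alpha_0$. Since $u$ is $C^{1,\gamma}$ and is smooth away from $\{f=0\}$, while $\eta\in C^{\infty}_0(\Omega)$, the function $g$ lies in $W^{1,2}_0(\Omega)$, so by the Remark following \lemref{salof} one may apply Saloff-Coste to $g$; noting $g^2=f^{\alpha+p/2}\eta^2$ and rearranging yields
\begin{align*}
e^{-c_0(1+\sqrt{\kappa}R)}V^{2/n}\left(\int_\Omega f^{\frac{n}{n-2}(\frac{p}{2}+\alpha)}\eta^{\frac{2n}{n-2}}\right)^{\frac{n-2}{n}} \leq R^2\int_\Omega |\nabla g|^2 + \int_\Omega f^{\alpha+p/2}\eta^2.
\end{align*}

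Next I would multiply the conclusion of \lemref{31} through by $R^2(2\alpha+p)^2/(4a_1\alpha)$, which converts its first term into $R^2\int_\Omega|\nabla g|^2$ and its second into $\frac{R^2(2\alpha+p)^2}{4a_1(n-1)\alpha}\int_\Omega f^{\alpha+q-p/2+1}\eta^2$. Substituting the resulting upper bound on $R^2\int_\Omega|\nabla g|^2$ into the Sobolev display, one obtains a single inequality whose left-hand side matches the target up to constants and whose right-hand side consists of three pieces: $R^2\int_\Omega f^{p/2+\alpha}|\nabla\eta|^2$ with coefficient $\sim (2\alpha+p)^2/\alpha^2$, $\kappa R^2\int_\Omega f^{p/2+\alpha}\eta^2$ with coefficient $\sim (2\alpha+p)^2/\alpha$, and the leftover $\int_\Omega f^{\alpha+p/2}\eta^2$ from the Sobolev remainder.

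Finally I would invoke the definition $\alpha_0=(1+\sqrt{\kappa}R)\max\{c_0+1,\,4(n-1)a_2^2/a_1\}$. For $\alpha \ge 4(n-1)a_2^2/a_1$, which is exactly the constraint \eqref{cond1} demanded by \lemref{31}, the quantity $(2\alpha+p)^2/\alpha$ is comparable to $\alpha$ up to a constant depending only on $p$, producing the coefficient $a_6\alpha R^2$ on the left and the clean factor $R^2$ on the right of the gradient term. The choice $\alpha_0\ge(c_0+1)(1+\sqrt{\kappa}R)$ forces both $e^{-\alpha_0}\le e^{-c_0(1+\sqrt{\kappa}R)}$ and $\kappa R^2\le \alpha_0^2/(c_0+1)^2$, so the $\kappa R^2$-term collapses into $a_7\alpha_0^2\alpha\int_\Omega f^{p/2+\alpha}\eta^2$; the stray $\int_\Omega f^{\alpha+p/2}\eta^2$ is subsumed by the same term since $\alpha_0^2\alpha\ge 1$.

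There is no genuine analytic obstacle here: the step is essentially bookkeeping, and the heavy lifting (the Bochner-type identity and Cauchy-Schwarz manipulations) was done in \lemref{31}. The one point that requires care is to verify that the single parameter $\alpha_0$ is large enough to simultaneously (i) dominate the Saloff-Coste exponent so that $e^{-\alpha_0}$ wins, (ii) guarantee the lower bound on $\alpha$ needed for \lemref{31}, and (iii) swallow the $\kappa R^2$ curvature error through $\alpha_0^2\ge C\kappa R^2$. All three become linear constraints on $\alpha_0^{-1}$, and the prescribed maximum exactly covers them, yielding \eqref{44} with explicit $a_6=a_6(n,p,q)$ and $a_7=a_7(n,p,q)$.
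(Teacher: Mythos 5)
Your proposal is correct and follows essentially the same route as the paper: apply the Saloff-Coste inequality to $f^{\alpha/2+p/4}\eta$, combine with \lemref{31}, and use the definition of $\alpha_0$ (together with the implicit requirement $\alpha\geq\alpha_0$) to absorb the exponential factor, the $\kappa R^2$ term, and the Sobolev remainder into $a_7\alpha_0^2\alpha$. The only difference is cosmetic bookkeeping — you multiply \lemref{31} by $R^2(2\alpha+p)^2/(4a_1\alpha)$ whereas the paper multiplies the Sobolev inequality by the reciprocal factor — and your verification of the three constraints on $\alpha_0$ matches the paper's choice of $c_1$.
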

	
	\begin{proof}
		By \lemref{salof}, there holds
		\begin{align*}
			\left(\int_{\Omega}f^{\frac{n}{n-2}(\frac{p}{2}+\alpha)}\eta^{\frac{2n}{n-2}}\right)^{\frac{n-2}{n}}
			\leq e^{c_0(1+\sqrt{\kappa} R)}V^{-\frac{2}{n}}\left(R^2\int_{\Omega}\left|\nabla \left(f^{\frac{p}{4}+\frac{\alpha}{2}}\eta\right)\right|^2+\int_{\Omega}f^{\frac{p}{2}+\alpha}\eta^2\right),
		\end{align*}
		where the constant $c_0=c_0(n)$ depends only on $n$. It follows from \eqref{equa3.3} and the above Sobolev inequality that there holds true
		\begin{align}\label{hea1}
			\begin{split}
				&\frac{4a_1\alpha}{(2\alpha+p)^2}e^{-c_0(1+\sqrt{\kappa} R)}V^{\frac{2}{n}}R^{-2}\left(\int_{\Omega}f^{\frac{n}{n-2}(\frac{p}{2}+\alpha)}\eta^{\frac{2n}{n-2}}\right)^{\frac{n-2}{n}}
				+ \frac{1}{n-1}\int_{\Omega}f^{\alpha+q-\frac{p}{2}+1}\eta^2\\
				\leq &\
				\frac{a_3}{\alpha}\int_{\Omega}f^{\frac{p}{2}+\alpha}|\nabla\eta|^2
				+2(n-1) \kappa \int_{\Omega} f^{\frac{p}{2}+\alpha}\eta^2
				+\frac{4a_1\alpha}{(2\alpha+p)^2R^2}\int_{\Omega}   f^{\frac{p}{2}+\alpha}\eta^2,
			\end{split}
		\end{align}
		where we require that $n\neq 2$. We now choose
		$$
		c_1 =\max\left\{c_0+1,\ \frac{4(n-1)a_2^2}{a_1 } \right\}
		$$
		and denote $\alpha_0 = c_1(1+\sqrt{\kappa}R)$. For $\alpha\geq\alpha_0$, there exist constants $a_4$ depending only on $n$, $p$ and $a_5$ depending only on $p$ such that
		$$
		2(n-1) \kappa+\frac{4a_1\alpha}{(2\alpha+p)^2R^2}\leq 2(n-1) \kappa+\frac{a_1}{pR^2}
		\leq \frac{a_4\alpha_0^2}{R^2}
		$$
		and
		$$\frac{a_5}{\alpha}\leq \frac{4a_1\alpha}{(2\alpha+p)^2}.$$
		It follows that
		\begin{align*} &\frac{a_5}{\alpha}e^{-\alpha_0}V^{\frac{2}{n}}R^{-2}\left(\int_{\Omega}f^{\frac{n}{n-2}(\frac{p}{2}+\alpha)}\eta^{\frac{2n}{n-2}}\right)^{\frac{n-2}{n}}
			+ \frac{1}{n-1}\int_{\Omega}f^{\alpha+q-\frac{p}{2}+1}\eta^2 \\
			\leq &\
			\frac{a_3}{\alpha}\int_{\Omega}f^{\frac{p}{2}+\alpha}|\nabla\eta|^2
			+\frac{a_4\alpha_0^2}{R^2} \int_{\Omega} f^{\frac{p}{2}+\alpha}\eta^2 .
		\end{align*}
		By taking some new suitable constants $a_6$ and $a_7$, we obtain the required result.
	\end{proof}

	\subsection{$L^{\alpha_1}$-bound of $f$ in a geodesic ball with radius $3R/4$}\label{sect:3.2}
	\begin{lem}\label{lem42}
		Let $(M,g)$ be a complete Riemannian manifold satisfying $\mathrm{Ric}_g\geq-(n-1)\kappa g$ for some constant $\kappa\geq0$. Assume that $u$ is a $C^1$-solution to equation \eqref{b=0} and $f=|\nabla u|^2$. If $p>1$ and $q>p-1$, then there holds true
		\begin{align*}
			\|f\|_{L^{\alpha_1}(B_{\frac{3R}{4}}(o))}
			\leq
			a_{12}V^{\frac{1}{\alpha_1}}\left(\frac{ 1+\sqrt{\kappa}R}{R}\right)^{\frac{2}{q-p+1}},
		\end{align*}
		where $\alpha_1 = \frac{n}{n-2}\left(\alpha_0+\frac{p}{2}\right)$ and the constant $a_{12}$ depends only on $n$, $p$ and $q$.
	\end{lem}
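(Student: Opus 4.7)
The plan is to apply the integral inequality \eqref{44} with $\alpha=\alpha_0$ in two stages. In the first stage a Caccioppoli-type iteration, using only the second term on the left-hand side of \eqref{44}, will control the auxiliary quantity $\Phi(r):=\int_{B_r}f^{\alpha_0+q-p/2+1}$ on a slightly smaller ball. In the second stage this bound is fed into the Sobolev term on the left-hand side of \eqref{44} to extract the desired $L^{\alpha_1}$ estimate. The common tool in both stages is Young's inequality in the form
$$f^{p/2+\alpha_0}\le \epsilon\, f^{\alpha_0+q-p/2+1}+C_p\,\epsilon^{-A},\qquad A:=\frac{p/2+\alpha_0}{q-p+1},$$
which is available because $q>p-1$ forces $q-p+1>0$.

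For the first stage, I would take $3R/4<r<\rho<R$ and pick $\eta\in C_0^\infty(B_\rho)$ with $\eta\equiv 1$ on $B_r$ and $|\nabla\eta|\le 2/(\rho-r)$. Inserting $\eta$ into \eqref{44}, discarding the Sobolev term on the LHS, and applying the Young inequality above to both integrals on the RHS with $\epsilon$ chosen so that the resulting $f^{\alpha_0+q-p/2+1}$ contribution equals $\tfrac12 a_6\alpha_0 R^2$, I expect a hole-filling inequality of the form
$$\Phi(r)\le \tfrac12\Phi(\rho) + C^{A+1}\,V\left[\frac{1}{\alpha_0^{A+1}(\rho-r)^{2(A+1)}}+\frac{\alpha_0^{2(A+1)}}{R^{2(A+1)}}\right],$$
with $C=C(n,p,q)$. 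The standard Giaquinta-Giusti iteration lemma on the interval $[7R/8,R]$ then yields
$$\Phi(7R/8)\le \widetilde C^{A+1}\,V\,(\alpha_0/R)^{2(A+1)},$$
where $\widetilde C$ still depends only on $n,p,q$.

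For the second stage, I would apply \eqref{44} once more with $\eta$ supported in $B_{7R/8}$ and equal to $1$ on $B_{3R/4}$, so that $|\nabla\eta|\le C/R$, and retain only the Sobolev term on the LHS to obtain
$$e^{-\alpha_0}V^{2/n}\left(\int_{B_{3R/4}}f^{\alpha_1}\right)^{(n-2)/n}\le C\alpha_0^3\int_{B_{7R/8}}f^{p/2+\alpha_0}.$$
The RHS integral is again controlled by Young's inequality together with the Stage 1 bound, by balancing $\epsilon\,\Phi(7R/8)$ against $\epsilon^{-A}V$; this produces $\int_{B_{7R/8}}f^{p/2+\alpha_0}\le C^{A+1}V(\alpha_0/R)^{2A}$. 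Substituting back and taking $\alpha_1$-th roots, using $\alpha_1=\tfrac{n}{n-2}(p/2+\alpha_0)$ so that $(\cdot)^{1/\alpha_1}$ scales like $(\cdot)^{1/\alpha_0}$ for large $\alpha_0$, one recovers the stated bound (with $\alpha_0$ expanded as $c_1(1+\sqrt{\kappa}R)$, and the factor $c_1^{2/(q-p+1)}$ absorbed into $a_{12}$).

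The main obstacle is keeping the prefactor uniform in $\kappa R$: at each step one picks up a constant raised to the power $A+1\sim\alpha_0$, coming from $\epsilon^{-A}$, from the iteration lemma, and from the $e^{\alpha_0}$ in the Saloff-Coste constant. These accumulate into a factor $D^{\alpha_0}$ for some $D=D(n,p,q)$, but because the $\alpha_1$-th root extracts one factor of $\alpha_0$ from the exponent, $(D^{\alpha_0})^{1/\alpha_1}$ tends to $D^{(n-2)/n}$ as $\alpha_0\to\infty$ and is thus a universal constant. It is precisely this matching of exponents — the Sobolev exponent $n/(n-2)$ appearing in $\alpha_1$ cancelling the exponentials accumulated in the iteration — that yields an estimate with $a_{12}$ depending only on $n$, $p$, and $q$.
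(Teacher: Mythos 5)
Your architecture is close to the paper's in outline: set $\alpha=\alpha_0$ in \eqref{44}, absorb the zeroth-order term $\alpha_0^3\int f^{p/2+\alpha_0}\eta^2$ into the good term $\int f^{\alpha_0+q-\frac{p}{2}+1}\eta^2$ via Young's inequality (the paper does exactly this by splitting $\Omega$ at the threshold $f=(2a_7\alpha_0^2/(a_6R^2))^{1/(q-p+1)}$), and rely on the final $1/(\alpha_0+\frac{p}{2})$-th root to tame constants of the form $D^{\alpha_0}$. Your Stage 2 is sound granted Stage 1. The gap is in Stage 1: the Giaquinta--Giusti iteration lemma does \emph{not} yield a constant of the form $\widetilde C^{A+1}$ with $\widetilde C=\widetilde C(n,p,q)$. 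For a hole-filling inequality $\Phi(r)\le\tfrac12\Phi(\rho)+B_1(\rho-r)^{-\beta}+B_2$ with the fixed absorption factor $\theta=\tfrac12$ and exponent $\beta=2(A+1)\sim c\,\alpha_0$, the lemma's proof uses radii with geometrically shrinking gaps $\lambda^i(1-\lambda)(R/8)$ and needs $\sum_i\theta^i\lambda^{-i\beta}<\infty$, which forces $\lambda>2^{-1/\beta}$ and hence a prefactor $(1-\lambda)^{-\beta}\gtrsim(\beta/\ln 2)^{\beta}\sim\alpha_0^{c\alpha_0}$. This is not of the benign form $D^{\alpha_0}$: after extracting the $1/\alpha_1$-th root it leaves a residual factor of order $\alpha_0^{c(n-2)/n}=\bigl(c_1(1+\sqrt{\kappa}R)\bigr)^{c'}$, so your $a_{12}$ would depend on $\kappa R$, and neither the stated lemma nor the clean form of the gradient estimate in Theorem 1.1 would follow. (Shrinking $\theta$ to beat $\lambda^{-\beta}$ is no escape, since $\theta$ is bought with $\epsilon$ in Young's inequality and the price $\epsilon^{-A}$ then explodes even faster.)

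The paper's proof removes the need for any radius iteration, and this is the idea you are missing: it handles the troublesome term $R^2\int f^{p/2+\alpha_0}|\nabla\eta|^2$ by choosing the cutoff as a high power of a standard one, $\eta=\gamma^{\frac{\alpha_0+q-\frac{p}{2}+1}{q-p+1}}$ with $|\nabla\gamma|\le C/R$. Then $R^2|\nabla\eta|^2\le a_8\alpha_0^2\,\eta^{\frac{2\alpha_0+p}{\alpha_0+q-\frac{p}{2}+1}}=a_8\alpha_0^2(\eta^2)^{\frac{\alpha_0+\frac{p}{2}}{\alpha_0+q-\frac{p}{2}+1}}$, i.e.\ the gradient of the cutoff is dominated by exactly the power of $\eta^2$ needed so that H\"older with exponents $\frac{\alpha_0+q-\frac{p}{2}+1}{\alpha_0+\frac{p}{2}}$ and its conjugate, followed by Young, absorbs this term directly into $\frac{a_6\alpha_0R^2}{2}\int f^{\alpha_0+q-\frac{p}{2}+1}\eta^2$ in a single step, at the cost of only $C^{A+1}(\alpha_0/R)^{2(A+1)}V$ --- the bound your Stage 1 was aiming for, but with a base genuinely depending only on $n$, $p$ and $q$. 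If you replace your hole-filling argument by this cutoff trick, the rest of your write-up (including the final root-extraction bookkeeping, which you describe correctly) goes through.
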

	
	\begin{proof}
		We set $\alpha=\alpha_0$ in \eqref{44} and obtain
		\begin{align}\label{equation:44}
			\begin{split}
				&e^{ -\alpha_0}V^{ \frac{2}{n}}\left(\int_{\Omega}f^{\frac{n}{n-2}(\frac{p}{2}+\alpha_0)}\eta^{\frac{2n}{n-2}}\right)^{\frac{n-2}{n}}
				+a_{6}\alpha_0 R^{2}\int_{\Omega}f^{\alpha_0+q-\frac{p}{2}+1}\eta^2\\
				&\leq a_{7} \left[R^{2}\int_{\Omega}f^{\frac{p}{2}+\alpha_0}|\nabla\eta|^2+\alpha_0^3\int_{\Omega} f^{\frac{p}{2}+\alpha_0}\eta^2 \right].
			\end{split}
		\end{align}
		If $$f\geq \left(\frac{2a_{7}\alpha_0^2}{a_{6}R^2}\right)^{\frac{1}{q-p+1}},$$
		it is easy to see
		$$
		a_{7} \alpha_0^3 f^{\frac{p}{2}+\alpha_0}\eta^2  \leq \frac{1}{2}
		a_{6}\alpha_0  R^{2} f^{\alpha_0+q-\frac{p}{2}+1}\eta^2.
		$$
		
		Let $\Omega = \Omega_1\cup\Omega_2$, where $$\Omega_1 =\left\{x\in\Omega:f\geq \left(\frac{2a_{7}\alpha_0^2}{a_{6}R^2}\right)^{\frac{1}{q-p+1}}\right\}$$ and $\Omega_2$ is the complement of $\Omega_1$. Then, it follows
		\begin{align}\label{equa:4.5}
			\begin{split}
				a_{7}\alpha_0^3\int_{\Omega} f^{\frac{p}{2}+\alpha_0}\eta^2 =&\ a_{7} \alpha_0^3\int_{\Omega_1} f^{\frac{p}{2}+\alpha_0}\eta^2 + a_{7} \alpha_0^3\int_{\Omega_2} f^{\frac{p}{2}+\alpha_0}\eta^2\\
				\leq&\
				\frac{a_{6}\alpha_0 R^{2}}{2}\int_{\Omega}f^{\alpha_0+q-\frac{p}{2}+1}\eta^2 +\frac{a_{6}\alpha_0 R^{2}}{2} \left(\frac{2a_{7}\alpha_0^2}{a_{6}R^2}\right)^{\frac{\alpha_0+q-\frac{p}{2}+1}{q-p+1}}V,
			\end{split}
		\end{align}
		where $V$ is the volume of $\Omega$.
		
		Choose $\gamma$ such that $0\leq\gamma\leq 1$, $\gamma\in C^\infty_0(B_R(o))$, $\gamma\equiv 1$ in $B_{\frac{3R}{4}}(o)$ and $|\nabla\gamma|\leq\frac{C }{R}$. Let
		$$\eta = \gamma^{\frac{\alpha_0+q-\frac{p}{2}+1}{q-p+1}}.$$
		We take a direct calculation to see that
		\begin{align}\label{equa:4.6}
			a_{7}R^2|\nabla\eta|^2\leq a_{7} C^2 \left(\frac{\alpha_0+q-\frac{p}{2}+1}{q-p+1}\right)^2\eta^{\frac{2\alpha_0+p}{\alpha_0+q-\frac{p}{2}+1}}\leq a_{8}\alpha^2_0\eta^{\frac{2\alpha_0+p}{\alpha_0+q-\frac{p}{2}+1}}.
		\end{align}
		It then follows from \eqref{equa:4.6}, H\"{o}lder inequality and Young inequality that
		\begin{align}\label{equa:4.7}
			\begin{split}
				a_{7}R^2\int_{\Omega} f^{\frac{p}{2}+\alpha_0}|\nabla\eta|^2\leq&\
				a_{8}\alpha_0^2 \int_{\Omega}f^{\frac{p}{2}+\alpha_0}\eta^{\frac{2\alpha_0+p}{\alpha_0+q-\frac{p}{2}+1}}\\
				\leq&\
				a_{8} \alpha_0^2 \left(\int_{\Omega} f^{\alpha_0+q-\frac{p}{2}+1}\eta^2\right)^{\frac{\alpha_0+\frac{p}{2}}{\alpha_0+q-\frac{p}{2}+1}}V^{\frac{q-p+1}{\alpha_0+q-\frac{p}{2}+1}}\\
				\leq&\
				\frac{a_{6}\alpha_0R^2}{2}\int_{\Omega}f^{\frac{p}{2}+\alpha_0+1}\eta^2
				+
				\frac{a_{6}\alpha_0R^2}{2}\left(\frac{2a_{8}\alpha_0}{a_{6}R^2}\right)^{\frac{\alpha_0+q-\frac{p}{2}+1}{q-p+1}} V.
			\end{split}
		\end{align}
		Now, by substituting \eqref{equa:4.5} and \eqref{equa:4.7} into \eqref{44} we obtain the following
		\begin{align}\label{equa:4.8}
			\begin{split}
				&\left(\int_{\Omega}f^{\frac{n}{n-2}(\frac{p}{2}+\alpha_0)}\eta^{\frac{2n}{n-2}}\right)^{\frac{n-2}{n}}
				\\
				\leq &\ e^{ \alpha_0}V^{1-\frac{2}{n}}
				\left[\frac{a_{6}\alpha_0R^2}{2} \left(\frac{2a_{7}\alpha_0^2}{a_{6}R^2}\right)^{\frac{q-\frac{p}{2}+\alpha_0+1}{q-p+1}}
				+ \frac{a_{6}\alpha_0R^2}{2}\left(\frac{2a_{8}\alpha_0}{a_{6}R^2}\right)^{\frac{\alpha_0+q-\frac{p}{2}+1}{q-p+1}}  \right]
				\\
				\leq &\ a_{9}
				e^{ \alpha_0}V^{1-\frac{2}{n}}a_{10}^{\alpha_0}\alpha_0^3\left(\frac{ \alpha_0 }{ R }\right)^{\frac{p+2\alpha_0 }{q-p+1}}.
			\end{split}
		\end{align}
		Taking $1/\left(\alpha_0+\frac{p}{2}\right)$ power of the both sides of \eqref{equa:4.8}, we obtain
		\begin{align}\label{47}
			&\|f\|_{L^{\alpha_1}(B_{\frac{3R}{4}}(o))}\leq a_{11}V^{\frac{1}{\alpha_1}}\left(\frac{\alpha_0}{R}\right)^{\frac{2}{q-p+1}}.
		\end{align}
		Hence, the required inequality follows.
	\end{proof}

	\subsection{Proof of \thmref{t1} and \thmref{addition}}\label{sect3.3}
	\begin{proof}
		By discarding the the second term on the left-hand side in \eqref{44}, we obtain
		\begin{align}\label{410}
			\left(\int_{\Omega}f^{\frac{n}{n-2}(\frac{p}{2}+\alpha)}\eta^{\frac{2n}{n-2}}\right)^{\frac{n-2}{n}}
			\leq &~ a_{7}e^{ \alpha_0}V^{-\frac{2}{n}}\left[R^{2}\int_{\Omega}f^{\frac{p}{2}+\alpha}|\nabla\eta|^2
			+\alpha_0^2\alpha\int_{\Omega} f^{\frac{p}{2}+\alpha}\eta^2 \right].
		\end{align}
		In order to adopt the Nash-Moser iteration, we set
		$$
		\alpha_{l+1}=\frac{n}{n-2}\alpha_l,\quad \Omega_l = B(o,r_l),\quad r_l=\frac{R}{2}+\frac{R}{4^{l }}, \quad l = 1, 2, \ldots
		$$
		and choose $\eta_l\in C^{\infty}_0(\Omega_l)$ such that
		$$	
		\eta_l\equiv 1 \,\text{ in }\,\Omega_{l+1}, \quad 0\leq \eta_l\leq 1 \quad 
		\text{and}\quad |\nabla\eta_l|\leq\frac{C(n)4^l}{R}.
		$$
		We now choose $\alpha$ such that $\alpha+\frac{p}{2} =\alpha_l $ and take $\eta = \eta_l$ in \eqref{410}. By the fact
		\begin{align*}
			\alpha_l\alpha_0^2\eta_l^2+R^2|\nabla\eta_l|^2\leq\alpha_0^2\left(\alpha_0 + \frac{p}{2}\right)\left(\frac{n}{n-2}\right)^{l} +C^2(n)16^l\leq a_{12}^l\alpha_0^2\alpha_1,
		\end{align*}
		we have
		\begin{align*}
			\left(\int_{\Omega_{l+1}}f^{\alpha_{l+1}}\right)^{\frac{1}{\alpha_{l+1}}}
			\leq&\
			\left(a_{7}e^{ \alpha_0}V^{-\frac{2}{n}}\right)^{\frac{1}{\alpha_l}}
			\left(\int_{\Omega_l}\left( \alpha_l\alpha_0^2\eta_l^2+R^2|\nabla\eta_l|^2\right) f^{\alpha_l}\right)^{\frac{1}{\alpha_l}}
			\\
			\leq &\
			\left(a_{7}\alpha_0^2\alpha_1e^{ \alpha_0}V^{-\frac{2}{n}} \right)^{\frac{1}{\alpha_l}} a_{12} ^{\frac{l}{\alpha_l}}
			\left(\int_{\Omega_l}f^{\alpha_l}\right)^{\frac{1}{\alpha_l}}.
		\end{align*}
		By the facts 	
		\begin{align*}
			\sum_{l=1}^\infty\frac{1}{\alpha_l} = \frac{n}{2\alpha_1}=\frac{n-2}{2\alpha_0+p}\quad  \text{ and }\quad \sum_{l=1}^\infty\frac{l}{\alpha_l} = \frac{n^2}{4\alpha_1}=\frac{n(n-2)}{2(2\alpha_0+p)}<  \frac{n(n-2)}{2p},
		\end{align*}
		the following two quantities
		$$
		\left(a_{7}\alpha_0^2\alpha_1e^{ \alpha_0}  \right)^{\sum_{l=1}^{\infty}\frac{1}{\alpha_l}}
		\quad \text{and}\quad
		a_{12} ^{\sum_{l=1}^{\infty}\frac{l}{\alpha_l}}
		$$
		are both uniformly bounded for any $R>0$ and $\kappa\geq 0$. By taking a standard iteration procedure, we have
		\begin{align}\label{equ:iter2}
			\begin{split}
				\|f\|_{L^{\infty}(B_{\frac{R}{2}}(o))}\leq\ & 	\left(a_{7}\alpha_0^2\alpha_1e^{ \alpha_0}V^{-\frac{2}{n}} \right)^{\sum_{l=1}^{\infty}\frac{1}{\alpha_l}} a_{12} ^{\sum_{l=1}^{\infty}\frac{l}{\alpha_l}}\|f\|_{L^{\alpha_1}(B_{\frac{3R}{4}}(o))}\\
				\leq\ &a_{13} V^{-\frac{1}{\alpha_1}}\|f\|_{L^{\alpha_1}(B_{\frac{3R}{4}}(o))}.
			\end{split}
		\end{align}
		
		Combining \eqref{equ:iter2} with \eqref{47} leads to
		\begin{align}\label{4111}
			\|\nabla u\|_{L^{\infty}(B_{\frac{R}{2}}(o))}\leq a_{14} \left(\frac{1+\sqrt{\kappa}R}{R}\right)^{\frac{1}{q-p+1}}.
		\end{align}
		Thus we finish the proof of \thmref{t1}.
	\end{proof}
	
	\begin{cor}
		Let $\Omega\subset\R^n$ be a region and $u\in C^1(\Omega)$ be a solution to the equation
		$$
		\Delta_p u-|\nabla u|^q=0,$$where $p>1$ and $q>p-1$.
		Then we have
		\begin{align*}
			|\nabla u(x)|\leq c(n,p,q)(d(x,\partial\Omega))^{-\frac{1}{q+1-p}}
		\end{align*}
		for any $x\in\Omega$. In addition, if $\Omega=\R^n$, then $u$ is a constant.
	\end{cor}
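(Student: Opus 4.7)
The plan is to apply \thmref{t1} directly to Euclidean balls sitting inside $\Omega$, exploiting the fact that $\R^n$ is a complete Riemannian manifold with vanishing Ricci (and indeed sectional) curvature, so we may take $\kappa = 0$ in the hypothesis $\mathrm{Ric}_g \geq -(n-1)\kappa g$.

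Fix $x \in \Omega$ and set $R := d(x, \partial\Omega) > 0$. Then the Euclidean geodesic ball $B_R(x)$ is entirely contained in $\Omega$, so $u$ restricts to a $C^1$-solution of the Hamilton--Jacobi equation $\Delta_p u - |\nabla u|^q = 0$ on $B_R(x)$. Applying \thmref{t1} with center $o = x$, radius $R$, and $\kappa = 0$, we obtain
\begin{align*}
|\nabla u(x)| \leq \sup_{B_{R/2}(x)} |\nabla u| \leq C_{n,p,q}\left(\frac{1 + 0 \cdot R}{R}\right)^{\frac{1}{q-p+1}} = C_{n,p,q}\, R^{-\frac{1}{q-p+1}},
\end{align*}
which is exactly the claimed pointwise bound with $c(n,p,q) = C_{n,p,q}$.

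For the second assertion, suppose $\Omega = \R^n$. Then for any fixed $x \in \R^n$ and arbitrary $R > 0$, the ball $B_R(x)$ is contained in $\Omega$, so the argument above yields $|\nabla u(x)| \leq C_{n,p,q} R^{-1/(q-p+1)}$ for every $R > 0$. Since $q > p - 1$ forces $1/(q-p+1) > 0$, letting $R \to \infty$ gives $|\nabla u(x)| = 0$. As $x$ was arbitrary and $u \in C^1(\R^n)$ is connected, $u$ is constant.

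There is essentially no obstacle here: the corollary is a direct specialization of \thmref{t1} combined with the observation that $d(x,\partial\Omega)$ is the largest admissible radius for a ball centered at $x$ inside $\Omega$. The only tiny point worth noting is that \thmref{t1} yields a bound on the supremum over $B_{R/2}(x)$ rather than just at the center, which trivially dominates $|\nabla u(x)|$; and that $\R^n$ trivially satisfies the completeness and curvature hypotheses required by \thmref{t1}.
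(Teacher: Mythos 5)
Your proof is correct and follows the same route as the paper: take $R=d(x,\partial\Omega)$, note $B_R(x)\subset\Omega$, and apply \thmref{t1} with $\kappa=0$, then let $R\to\infty$ for the Liouville statement. Your version is in fact slightly more careful than the paper's, since you bound $|\nabla u(x)|$ by the supremum over $B_{R/2}(x)$ (which is what \thmref{t1} actually controls) rather than over $B_R(x)$.
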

	
	\begin{proof}
		Denote $R=d(x,\partial\Omega)$. Obviously, we have $B_R(x)\subset \Omega$ and
		$$
		|\nabla u(x)|\leq \sup_{B_R(x)}|\nabla u|\leq a_{16}\left(\frac{1}{R}\right)^{\frac{1}{q-p+1}}
		=a_{16} d(x,\partial\Omega)^{-\frac{1}{q-p+1}}.
		$$
		Here the constant $a_{16}$ depends only on $n$, $p$, and $q$.
	\end{proof}
	
	This corollary is an improvement of {\cite[Theorem A]{MR3261111}}. Another important application of \thmref{t1} is that any entire solution to $\Delta_p u-|\nabla u|^q=0$ on a non-compact complete Riemannian manifold with $\ric_g\geq-(n-1)\kappa g$ satisfies Harnack inequality. \thmref{addition} is just the following
	
	\begin{cor}\label{cor4.4}
		Assume $(M, g)$ satisfies the same assumptions as in \thmref{t1}. Let $u\in C^1(M)$ be a global solution to the equation \eqref{hamjoco} on $M$, i.e.,
		$$\Delta_p u-|\nabla u|^q=0,\quad q>p-1,\quad p>1.$$
		Then, for any fixed $a\in M$ and any $x\in M$ we have
		\begin{align}\label{dist}
			u(a)-c(n,p,q)\kappa^{\frac{1}{2(1-p+q)}}d(x,a)\leq  u(x)\leq u(a)+c(n,p,q)\kappa^{\frac{1}{2(1-p+q)}}d(x,a).
		\end{align}
		Especially, when $p=q$ the equation \eqref{hamjoco} is just the logarithmic transformation of $p$-Laplace equation, then for any positive $p$-harmonic function $v$ on $M$, there holds true
		\begin{align}\label{dist1}
			v(a)e^{-c(n,p.q)\sqrt{\kappa}d(x,a)}
			\leq v(x)\leq  v(a)e^{c(n,p.q)\sqrt{\kappa}d(x,a)},\quad \forall x\in M.
		\end{align}
	\end{cor}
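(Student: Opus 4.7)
The plan is to deduce both claims directly from \thmref{t1}: first, by sending $R \to \infty$ in that gradient estimate, extract a uniform global bound on $|\nabla u|$ over all of $M$; then, integrate this bound along a minimizing geodesic joining $a$ and $x$. The Harnack inequality \eqref{dist1} will follow by specializing the first assertion to the logarithmic transformation of the positive $p$-harmonic function.

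For the global gradient bound, fix any $o\in M$. Applying \thmref{t1} on the ball $B_R(o)$ for arbitrary $R>0$ gives
$$\sup_{B_{R/2}(o)}|\nabla u| \;\leq\; C_{n,p,q}\Bigl(\sqrt{\kappa}+\tfrac{1}{R}\Bigr)^{\frac{1}{q-p+1}}.$$
Since $C_{n,p,q}$ depends only on $n,p,q$ and is independent of $R$, sending $R\to\infty$ (which is legitimate by completeness of $(M,g)$) produces the pointwise bound $|\nabla u(y)|\leq C_{n,p,q}\kappa^{\frac{1}{2(q-p+1)}}$ for every $y\in M$. The Hopf--Rinow theorem then supplies, for any $a,x\in M$, a unit-speed minimizing geodesic $\gamma:[0,d(x,a)]\to M$ with $\gamma(0)=a$ and $\gamma(d(x,a))=x$, along which
$$|u(x)-u(a)| \;\leq\; \int_0^{d(x,a)}|\nabla u(\gamma(t))|\,dt \;\leq\; C_{n,p,q}\kappa^{\frac{1}{2(q-p+1)}}\,d(x,a).$$
This is precisely \eqref{dist}.

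For the Harnack inequality \eqref{dist1}, a short Leibniz-rule computation shows that if $v>0$ satisfies $\Delta_p v=0$, then $u:=-(p-1)\log v$ satisfies $\Delta_p u = |\nabla u|^p$, i.e.\ equation \eqref{hamjoco} with $q=p$. Since $q-p+1=1$, the Lipschitz estimate just proved reads $|u(x)-u(a)|\leq c\sqrt{\kappa}\,d(x,a)$, equivalently $|\log v(x)-\log v(a)|\leq \tfrac{c}{p-1}\sqrt{\kappa}\,d(x,a)$, and exponentiating delivers \eqref{dist1}.

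There is essentially no obstacle in this argument: the entire substance is concentrated in \thmref{t1}, whose $R$-independent constant is what legitimises the passage to the limit. The only point requiring small care is the logarithmic change of variables in Step~3, which rests on the identity $\mathrm{div}(X/v^{p-1})=v^{-(p-1)}\mathrm{div}(X)-(p-1)v^{-p}\langle X,\nabla v\rangle$ applied to $X=|\nabla v|^{p-2}\nabla v$ and uses positivity of $v$ to keep $\log v$ well-defined.
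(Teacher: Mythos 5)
Your proposal is correct and follows essentially the same route as the paper: let $R\to\infty$ in the gradient estimate of \thmref{t1} to get the uniform bound $|\nabla u|\leq c(n,p,q)\kappa^{\frac{1}{2(q-p+1)}}$, integrate along a minimizing geodesic to obtain \eqref{dist}, and apply this with $q=p$ to $u=(1-p)\log v$ to deduce \eqref{dist1}. The only difference is that you spell out the Hopf--Rinow step and the Leibniz computation for the logarithmic change of variables, which the paper leaves implicit.
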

	
	\begin{proof}
		Letting $R\to \infty$ in \eqref{4111},  we have
		\begin{align}\label{413}
			|\nabla u|\leq c(n,p.q)\kappa^{\frac{1}{2(1-p+q)}}.
		\end{align}
		
		Let $d=d(x, a)$ be the distance between $a$ and $x$. For any arc minimizing geodesic segment $\gamma(t):[0,d]\to M$ which connects $a$ and $x$, we have
		\begin{align}\label{415}
			u(x) = u(a)+\int_{0}^d\frac{d}{dt}\left(u\circ\gamma(t) \right)dt.
		\end{align}
		
		Since $|\gamma'|=1$, we infer from  \eqref{413} that
		\begin{align}\label{416}
			\left| \frac{d}{dt}\left(u\circ\gamma(t) \right)\right| \leq |\nabla u(\gamma(t))||\gamma'(t)|\leq c(n,p.q)\kappa^{\frac{1}{2(1-p+q)}}.
		\end{align}
		It follows from \eqref{415} and \eqref{416} that
		\begin{align*}
			|u(x)-u(a)|\leq c(n,p.q)\kappa^{\frac{1}{2(1-p+q)}}d(x,a),
		\end{align*}
		which implies \eqref{dist}.
		Then \eqref{dist1} follows since $u = (1 - p)\ln v$ satisfies
		$$
		\Delta_pu-|\nabla u|^p=0.
		$$
	\end{proof}
	
	L. V\'{e}ron et al. (\cite[Theorem G]{MR3261111}) obtained \eqref{dist1} under the additional condition
	$$
	\begin{cases}
		r_M=\infty, & \mbox{ if } 1<p<2; \\
		\lim_{d(a,x)\to\infty}\frac{|\mathrm{Sec}(x)|}{d(x,a)}=0, & \mbox{ if } p>2,
	\end{cases}
	$$
	where $r_M$ is the convexity radius and $\mathrm{Sec}(x)$ denotes the maximal sectional curvature at $x$. \corref{cor4.4} is an improvement of their results.
	
	\subsection{The case $\dim(M)=n=2$.}
	In the proof of \thmref{t1} above, we have used Saloff-Coste's Sobolev inequality on the embedding $W^{1,2}(B)\hookrightarrow L^{\frac{2n}{n-2}}(B)$ on a manifold. Since the Sobolev exponent $2^*=2n/(n-2)$ requires $n>2$, we did not consider the case $n=2$ in \thmref{t1}. In this subsection, we will explain briefly that \thmref{t1} can also be established when $n=2$.
	
	When $\dim M=n=2$, we need the special case of  Saloff-Coste's Sobolev theorem, i.e., \lemref{salof}. For any $n'> 2$, there holds
	$$
	\|f\|_{L^{\frac{2n'}{n'-2}}(B)}^2\leq e^{c_0(1+\sqrt{\kappa}R)}V^{-\frac{2}{n'}}R^2\left(\int_B|\nabla f|^2+R^{-2}f^2\right)
	$$for any $f\in C^{\infty}_0(B)$.
	
	For example, we can choose $n'=4$, then we have	
	\begin{align*}
		\left(\int_{\Omega}f^{2\alpha+p}\eta^{4}\right)^{2}
		\leq e^{c_0(1+\sqrt{\kappa} R)}V^{-\frac{1}{2}}\left(R^2\int_{\Omega}\left|\nabla \left(f^{\frac{p}{4}+\frac{\alpha}{2}}\eta\right)\right|^2+\int_{\Omega}f^{\frac{p}{2}+\alpha}\eta^2\right).
	\end{align*}
	By the above inequality and \eqref{equa3.3}, we can deduce the following integral inequality by almost the same method as in Section \ref{sect3.1}
	\begin{align} \label{equation3.19}
		\begin{split}
			&e^{ -\alpha_0}V^{ \frac{1}{2}}\left(\int_{\Omega}f^{ p+2\alpha }\eta^{4}\right)^{\frac{1}{2}}
			+a_{6}\alpha R^{2}\int_{\Omega}f^{\alpha+q-\frac{p}{2}+1}\eta^2\\
			&\leq a_{7} \left[R^{2}\int_{\Omega}f^{\frac{p}{2}+\alpha}|\nabla\eta|^2
			+\alpha_0^2\alpha\int_{\Omega} f^{\frac{p}{2}+\alpha}\eta^2 \right],
		\end{split}
	\end{align}
	where $\alpha_0$ is the same as the $\alpha_0$ defined in Section \ref{sect3.1}.
	
	By repeating the same procedure as in Section \ref{sect:3.2}, we can deduce from \eqref{equation3.19} the $L^{\alpha_1}$-bound of $f $ in a geodesic ball with radius $3R/4$
	\begin{align}\label{equation:3.20}
		\|f\|_{L^{\alpha_1}(B_{\frac{3R}{4}}(o))}
		\leq
		a_{12}V^{\frac{1}{\alpha_1}}\left(\frac{ 1+\sqrt{\kappa}R}{R}\right)^{\frac{2}{q-p+1}},
	\end{align}
	where $\alpha_1=p+2\alpha_0$.
	
	For the Nash-Moser iteration, we set $\alpha_l = 2^l(\alpha_0+p/2)$ and $\Omega_l$ by the similar way with that in Section \ref{sect3.3}, and can obtain the following \eqref{equ:iter2}
	\begin{align}\label{equation:3.21}
		\|f\|_{L^{\infty}(B_{\frac{R}{2}}(o))}
		\leq\ &a_{13} V^{-\frac{1}{\alpha_1}}\|f\|_{L^{\alpha_1}(B_{\frac{3R}{4}}(o))}.
	\end{align}
	Combining \eqref{equation:3.20} and \eqref{equation:3.21}, we finally obtain the Cheng-Yau type gradient estimate. Harnack inequality and Liouville type results follow from the Cheng-Yau type gradient estimate.

	\section{Gradient estimate for the solutions of \eqref{equ0}}\label{sect4}
	
	In this section, we consider the case where  $b(x)$ does not vanish. We will provide the proof of \thmref{t4} and \thmref{t5}. The structure of our proof is similar to Section \ref{sect3}. By an abuse of notations, we still use $a_i$, $i=1,\,2\,\cdots$, to denote constant depending on $n$, $p$, $q$ and $r$, but the same $a_i$ may represent different numbers in different sections.

	\begin{lem}\label{41}	
		Let $u\in C^1(\Omega)$ be a solution to equation \eqref{equ0} with $p>1$ and $r\geq 1$, defined on a region $\Omega\subset M$. Then, there exist constants $a_1 = \min\{1, p-1\}$, $a_2=\left|q-\frac{2(p-1)}{n-1}\right|$ and $a_3$ such that, for any positive number $\alpha$ satisfying
		\begin{align}\label{cond41}
			\frac{a_2^2}{a_1\alpha}\leq\frac{1}{4(n-1)}
		\end{align}
		and any non-negative $\eta\in C^\infty_0(\Omega)$, there holds true
		\begin{align}\label{equa42}
			\begin{split}
				&\frac{2a_1\alpha}{(2\alpha+p)^2}\int_{\Omega}\left|\chi\nabla\left(f^{\frac{\alpha}{2}+\frac{p}{4}}\eta\right)\right|^2
				+ \frac{1}{n-1}\int_{\Omega}f^{\alpha+q-\frac{p}{2}+1}\eta^2\chi\\
				\leq &\
				\frac{a_3}{\alpha}\int_{\Omega}f^{\frac{p}{2}+\alpha}|\nabla\eta|^2\chi
				+2(n-1)\left(\kappa+2r^2\|b\|^2_{1,\infty}\right)\int_{\Omega} f^{\frac{p}{2}+\alpha}\eta^2\chi
				\\
				&
				+a_4\|b\|^2_{1,\infty}\int_{\Omega} |u|^{2r}f^{\alpha +\frac{p}{2}}\eta^2\chi,
			\end{split}
		\end{align}
		where $\chi = \chi_{\{f\geq 1\}}$ is the characteristic function of the set $\{x\in\Omega:f(x)\geq 1\}$.
	\end{lem}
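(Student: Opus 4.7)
The plan is to adapt the proof of \lemref{31} to the case $b\not\equiv 0$. We start from the pointwise inequality \eqref{basiclm} of \lemref{linear}, which, in addition to the terms present in the $b\equiv 0$ inequality \eqref{equa2.9}, contains two contributions from the nonlinearity $b(x)|u|^{r-1}u$: namely $-2\la\nabla(b|u|^{r-1}u),\nabla u\ra$ and $-4b|u|^{r-1}u\,f^{(2-p+q)/2}/(n-1)$. As in \lemref{31}, I would multiply \eqref{basiclm} by a non-negative test function $f_\epsilon^\alpha\eta^2\Phi$, where $f_\epsilon=(f-\epsilon)_+$ handles the possible degeneracy at $\{f=0\}$ and $\Phi$ is a smooth approximation of the characteristic function $\chi=\chi_{\{f\geq 1\}}$, integrate by parts on $\mathcal L(f)$, and pass to the limit $\epsilon\to 0^+$, $\Phi\to\chi$ at the end. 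The contribution of the $b$-free terms is handled exactly as in \lemref{31} and reproduces, restricted to the set $\{f\geq 1\}$, the LHS together with the first two RHS terms of \eqref{equa42}.

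The new content is the control of the two integrals coming from the $b$-terms. After expanding $\nabla(b|u|^{r-1}u)=(\nabla b)|u|^{r-1}u+r\,b|u|^{r-1}\nabla u$ and using $|b|,|\nabla b|\le\|b\|_{1,\infty}$, they reduce to quantities of the form $\|b\|_{1,\infty}|u|^r f^{\alpha+1/2}$, $r\|b\|_{1,\infty}|u|^{r-1}f^{\alpha+1}$, and $\|b\|_{1,\infty}|u|^r f^{\alpha+(2-p+q)/2}$, each integrated against $\eta^2\chi$. For each I would apply Young's inequality $2AB\le A^2+B^2$ with the split chosen so that $A^2=\|b\|_{1,\infty}^2|u|^{2r}f^{\alpha+p/2}$, which reproduces exactly the last term on the RHS of \eqref{equa42}, while $B^2$ becomes a pure power $f^\beta$. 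The factor $2r^2$ appearing in the coefficient $(\kappa+2r^2\|b\|_{1,\infty}^2)$ is then traced back to the $r\,b|u|^{r-1}$ piece through the square in Young.

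The main technical obstacle is checking that every remainder $B^2=f^\beta$ can indeed be absorbed into a term already present in the inequality. This is exactly where the characteristic function $\chi_{\{f\ge 1\}}$ is essential: on $\{f\ge 1\}$ the function $\beta\mapsto f^\beta$ is non-decreasing, so any intermediate power $f^\beta$ with $\alpha+p/2\le\beta\le\alpha+q-p/2+1$ is pointwise bounded by either $f^{\alpha+p/2}\chi$ (which joins the RHS with a $\|b\|_{1,\infty}^2$ coefficient) or $f^{\alpha+q-p/2+1}\chi$ (which is absorbed into the good LHS term by picking the Young coefficient small enough). A direct check shows that the four exponents appearing above, $\alpha+1/2$, $\alpha+1$, $\alpha+(2-p+q)/2$, and $\alpha-p/2+1$, all lie in this range thanks to the hypotheses $q>\max\{1,p-1\}$ and $p>1$; in particular $q>p/2$ gives $\alpha+1\le\alpha+q-p/2+1$. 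Once these absorptions are done, the remaining steps, namely absorbing the $|\nabla f|$ cross-term under \eqref{cond41} and converting $|\nabla f|^2$ into $|\nabla(f^{\alpha/2+p/4}\eta)|^2$ via $(a+b)^2\le 2(a^2+b^2)$, are word-for-word identical to \lemref{31} and produce \eqref{equa42}.
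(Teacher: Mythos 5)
Your proposal is correct and follows essentially the same route as the paper: test \eqref{basiclm} against $f^{\alpha}\eta^{2}\chi$, observe that the boundary contribution on $\{f=1\}$ (or, in your regularized version, the $\Phi'$-term) has a favorable sign, absorb the $|\nabla f|$ cross-terms under \eqref{cond1}, and use the monotonicity of $\beta\mapsto f^{\beta}$ on $\{f\geq 1\}$ to push all stray powers of $f$ to either $f^{\alpha+p/2}$ or $f^{\alpha+q-\frac{p}{2}+1}$. The one loose point is the piece $r\,b|u|^{r-1}f^{\alpha+1}$, where your prescribed split $A^{2}=\|b\|^{2}_{1,\infty}|u|^{2r}f^{\alpha+p/2}$ is not literally available (it would require a factor $|u|^{-1}$); as in the paper, Young's inequality there yields $r^{2}b^{2}|u|^{2r-2}f^{\beta}$ and one must add the elementary bound $|u|^{2r-2}\leq 1+|u|^{2r}$ (valid since $r\geq 1$) to distribute it between the $2r^{2}\|b\|^{2}_{1,\infty}\int f^{\frac{p}{2}+\alpha}\eta^{2}\chi$ term and the $a_{4}\|b\|^{2}_{1,\infty}\int |u|^{2r}f^{\frac{p}{2}+\alpha}\eta^{2}\chi$ term.
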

	
	\begin{proof}
		By the regularity theory of elliptic equations, away from the set $\{x\in\Omega: f(x)=0\}$, $u$ is smooth enough since the equation is nondegenerate on such a domain. So, the terms appearing on the both sides of \eqref{basiclm} are in fact smooth. Let $\psi = f^{\alpha}\eta^2\chi=(f\chi)^{\alpha}\eta^2$, where $\eta\in C^{\infty}_0(B_R(o))$ is non-negative and $\alpha>1$ which will be determined later. By multiplying the both sides of \eqref{basiclm} by $\psi$ and integrating over $\Omega$, we take integration by part and a direct computation to derive
		\begin{align}\label{intineq44}
			\begin{split}
				&\int_{\Omega}\alpha f^{\frac{p}{2}+\alpha-2}\eta^2\chi \left(|\nabla f|^2+(p-2)f^{-1}\la\nabla u,\nabla f\ra^2\right)
				\\
				&+\int_{\{x\in\Omega:\, f(x)=1\}}|\nabla f|^{-1}f^{\alpha+\frac{p}{2}-1}\eta^2   \left( |\nabla f|^2+(p-2)f^{-1}\la\nabla u,\nabla f\ra^2\right)
				\\
				&+\int_{\Omega}2f^{\frac{p}{2}+\alpha-1}\eta\chi \la \nabla f+(p-2)f^{-1}\la\nabla u,\nabla f\ra\nabla u, \nabla\eta\ra
				+\frac{2}{n-1}\int_{\Omega}f^{\alpha+q-\frac{p}{2}+1}\eta^2\chi
				\\
				\leq&\
				2\int_{\Omega}\la\nabla(b|u|^{r-1}u), \nabla u\ra f^{\alpha}\eta^2\chi
				+ a_2\int_{\Omega}f^{\alpha+\frac{q-1}{2} }|\nabla f|\eta^2\chi \\
				&+\frac{4}{n-1}\int_{\Omega}
				b|u|^{r-1}uf^{\alpha+1+\frac{q-p}{2} } \eta^2\chi
				+2(n-1)\kappa\int_{\Omega} f^{\frac{p}{2}+\alpha}\eta^2\chi,
			\end{split}
		\end{align}
		here we have used Stokes theorem on the region $\{x\in \Omega:f(x)\geq 1\}$ and fact the outward pointing normal vector of $\{x\in \Omega:f(x)\geq 1\}$, denoted by $\nu$, can be expressed by $\nu = -\nabla f /|\nabla f|$.
		
		Three terms containing inner products in the inequality \eqref{intineq44} can be controlled as what follows,
		\begin{align}\label{es1}
			|\nabla f|^2+(p-2)f^{-1}\la\nabla u,\nabla f\ra^2 \geq \min\{1, p-1\}|\nabla f|^2 = a_1 |\nabla f|^2,
		\end{align}		
		\begin{align}\label{es2}
			- 2f^{\frac{p}{2}+\alpha-1} \la \nabla f+(p-2)f^{-1}\la\nabla u,\nabla f\ra\nabla u, \nabla\eta\ra\leq 2(p+1)f^{\frac{p}{2}+\alpha-1} |\nabla f||\nabla\eta|,
		\end{align}	
		and
		\begin{align}\label{es3}
			\la\nabla(b|u|^{r-1}u), \nabla u\ra f^{\alpha} \leq |\nabla b|  |u|^{r} f^{\alpha+\frac{1}{2}}+r|b||u|^{r-1}f^{\alpha+1}.
		\end{align}	
		By \eqref{es1}, we can deduce that the boundary integral term is non-negative, i.e.,
		\begin{align}\label{es4}
			&\int_{\{x\in\Omega:\,f(x)=1\}}|\nabla f|^{-1}f^{\alpha+\frac{p}{2}-1}\eta^2 \left( |\nabla f|^2+(p-2)f^{-1}\la\nabla u,\nabla f\ra^2\right) \\
			\geq & \int_{\{x\in\Omega:\,f(x)=1\}}a_1|\nabla f| f^{\alpha+\frac{p}{2}-1}\eta^2\geq 0.
		\end{align}
		Now, in view of \eqref{es1}, \eqref{es2}, \eqref{es3} and \eqref{es4}, we can derive from \eqref{intineq44} that
		\begin{align}\label{ineq49}
			\begin{split}
				&a_1\alpha\int_{\Omega}f^{\frac{p}{2}+\alpha-2}\eta^2\chi |\nabla f|^2
				+\frac{2  }{n-1}\int_{\Omega}f^{\alpha+q-\frac{p}{2}+1}\eta^2\chi
				\\
				\leq&\ 2\int_{\Omega}|\nabla b|  |u|^{r-1}u f^{\alpha+\frac{1}{2}}\eta^2\chi  + 2r\int_{\Omega} |b| |u|^{r-1}f^{\alpha+1}\eta^2\chi  + a_2\int_{\Omega} f^{\alpha+\frac{q-1}{2}}|\nabla f|\eta^2\chi \\
				&+ 2(n-1)\kappa\int_{\Omega} f^{\frac{p}{2}+\alpha}\eta^2\chi +2(p+1)\int_{\Omega}f^{\frac{p}{2}+\alpha-1}\eta\chi |\nabla f||\nabla\eta|\\
				&+
				\frac{ 4  }{n-1}\int_{\Omega}b|u|^{r}f^{1+\alpha+\frac{q-p}{2}}\eta^2\chi .
			\end{split}
		\end{align}
		Let $R_i$ represent the $i$-th term on the right-hand side of \eqref{ineq49}. By Cauchy inequality, we have
		\begin{align*}
			R_1\leq &\
			\frac{1}{4(n-1)}\int_{\Omega}f^{\alpha+q-\frac{p}{2}+1}\eta^2\chi +
			4(n-1)\int_{\Omega}|\nabla b|^2|u|^{2r}f^{\alpha-q+\frac{p}{2}}\eta^2\chi ,
			\\
			R_2\leq &\
			\frac{1}{4(n-1)}\int_{\Omega}f^{\alpha+q-\frac{p}{2}+1}\eta^2\chi +
			4r^2(n-1)\int_{\Omega}b^2|u|^{2r-2}f^{\alpha-q+\frac{p}{2}+1}\eta^2\chi ,
			\\
			R_3\leq&\
			\frac{a_2^2}{a_1\alpha}\int_{\Omega}f^{\alpha+q-\frac{p}{2}+1}\eta^2\chi +\frac{a_1\alpha}{4}\int_{\Omega}f^{\frac{p}{2}+\alpha-2}\eta^2\chi |\nabla f|^2,
			\\
			R_5\leq &\
			\frac{a_1\alpha}{4}\int_{\Omega}f^{\frac{p}{2}+\alpha-2}\eta^2\chi |\nabla f|^2+\frac{4(p+1)^2}{a_1\alpha}\int_{\Omega}f^{\frac{p}{2}+\alpha}|\nabla\eta|^2\chi ,
		\end{align*}
		and
		\begin{align*}
			R_6\leq &\
			\frac{1}{4(n-1)}\int_{\Omega}f^{\alpha+q-\frac{p}{2}+1}\eta^2\chi +\frac{16}{n-1}\int_{\Omega}b^2|u|^{2r}f^{\alpha-\frac{p}{2}+1}\eta^2\chi .
		\end{align*}
		By picking $\alpha$ such that
		$$
		\frac{a_2^2}{a_1\alpha}\leq\frac{1}{4(n-1)},
		$$
		we can infer from \eqref{ineq49} and the above estimates on $R_i$ that
		\begin{align}\label{equa:4.2}
			\begin{split}
				&\frac{a_1\alpha}{4}\int_{\Omega}f^{\frac{p}{2}+\alpha-2}\eta^2\chi |\nabla f|^2
				+\frac{1}{n-1}\int_{\Omega}f^{\alpha+q-\frac{p}{2}+1}\eta^2\chi
				\\
				\leq&\
				2(n-1)\kappa\int_{\Omega} f^{\frac{p}{2}+\alpha}\eta^2\chi
				+\frac{4(p+1)^2}{a_1\alpha}\int_{\Omega}f^{\frac{p}{2}+\alpha}|\nabla\eta|^2\chi
				+
				\frac{16}{n-1}  \int_{\Omega}f^{\alpha-\frac{p}{2}+1}b^2|u|^{2r}\eta^2\chi
				\\
				&		
				+4(n-1)\int_{\Omega}|\nabla b|^2|u|^{2r}f^{\alpha-q+\frac{p}{2}}\eta^2\chi
				+4r^2(n-1)\int_{\Omega}b^2|u|^{2r-2}f^{\alpha-q+\frac{p}{2}+1}\eta^2\chi .
			\end{split}
		\end{align}
		
		On the other hand, by using the inequality $(a + b)^2 \leq 2(a^2 +b^2)$ we have
		\begin{align}\label{equa:4.3}
			\int_{\Omega}\left|\chi\nabla\left(f^{\frac{\alpha}{2}+\frac{p}{4}}\eta\right)\right|^2\leq
			\frac{1}{2}\left(\frac{2\alpha+p}{2}\right)^2\int_{\Omega}f^{\alpha+\frac{p}{2}-2}|\nabla f|^2\eta^2\chi
			+2\int_{\Omega}f^{\alpha+\frac{p}{2}}|\nabla\eta|^2\chi .
		\end{align}
		It follows from \eqref{equa:4.2} and \eqref{equa:4.3} that
		\begin{align}\label{equa412}
			\begin{split}
				&\frac{2a_1\alpha}{(2\alpha+p)^2}\int_{\Omega}\left|\chi\nabla\left(f^{\frac{\alpha}{2}+\frac{p}{4}}\eta\right)\right|^2
				+\frac{1}{n-1}\int_{\Omega}f^{\alpha+q-\frac{p}{2}+1}\eta^2\chi
				\\
				\leq&
				\left(\frac{4(p+1)^2}{a_1\alpha}+\frac{4a_1\alpha}{(2\alpha+p)^2}\right)\int_{\Omega}f^{\frac{p}{2}+\alpha}|\nabla\eta|^2\chi
				+2(n-1)\kappa\int_{\Omega} f^{\frac{p}{2}+\alpha}\eta^2\chi
				\\
				&
				+
				\frac{16}{n-1}
				\int_{\Omega}f^{\alpha-\frac{p}{2}+1}b^2|u|^{2r}\eta^2\chi+4(n-1)\int_{\Omega}|\nabla b|^2|u|^{2r}f^{\alpha-q+\frac{p}{2}}\eta^2\chi \\
				&+4r^2(n-1)\int_{\Omega}b^2|u|^{2r-2}f^{\alpha-q+\frac{p}{2}+1}\eta^2\chi .
			\end{split}
		\end{align}
		
		By the definition of $\chi$, we know that
		$$(\chi f)^{\xi}\leq (\chi f)^{\delta}$$
		as long as $0<\xi\leq\delta$. Using the conditions $p>1$ and $q\geq 1$, we have
		$$
		\alpha-\frac{p}{2}+1\leq \alpha+\frac{p}{2},\quad  \alpha-q+\frac{p}{2}<  \alpha+\frac{p}{2},\quad  \alpha-q+\frac{p}{2}+1\leq  \alpha+\frac{p}{2}.
		$$
		Although each of the last three terms on the right-hand side of \eqref{equa412} is of different powers of $f$, we can raise their powers up to $p/2 + \alpha$. Hence
		\begin{align}\label{he0}
			\begin{split}
				&\frac{2a_1\alpha}{(2\alpha+p)^2}\int_{\Omega}\left|\chi\nabla\left(f^{\frac{\alpha}{2}+\frac{p}{4}}\eta\right)\right|^2
				+\frac{1}{n-1}\int_{\Omega}f^{\alpha+q-\frac{p}{2}+1}\eta^2\chi
				\\
				\leq&\
				\left(\frac{4(p+1)^2}{a_1\alpha}+\frac{4a_1\alpha}{(2\alpha+p)^2}\right)\int_{\Omega}f^{\frac{p}{2}+\alpha}|\nabla\eta|^2\chi				
				+4r^2(n-1)\|b\|^2_{1,\infty}\int_{\Omega} |u|^{2r-2}f^{\alpha+\frac{p}{2}}\eta^2\chi
				\\
				&
				+
				\left(\frac{16}{n-1}+4(n-1)\right)\|b\|^2_{1,\infty}
				\int_{\Omega}f^{\alpha+\frac{p}{2}} |u|^{2r}\eta^2\chi +2(n-1)\kappa\int_{\Omega} f^{\frac{p}{2}+\alpha}\eta^2\chi.
			\end{split}
		\end{align}
		According to Young inequality, we have
		\begin{align}
			|u|^{2r-2}\leq \frac{1}{r}+\frac{r-1}{r}|u|^{2r}\leq 1+|u|^{2r},\quad  \forall  r\geq 1.
		\end{align}
		Hence we can infer from \eqref{he0} that
		\begin{align}\label{he}
			\begin{split}
				&\frac{2a_1\alpha}{(2\alpha+p)^2}\int_{\Omega}\left|\chi\nabla\left(f^{\frac{\alpha}{2}+\frac{p}{4}}\eta\right)\right|^2
				+ \frac{1}{n-1}\int_{\Omega}f^{\alpha+q-\frac{p}{2}+1}\eta^2\chi\\
				\leq &\
				\frac{a_3}{\alpha}\int_{\Omega}f^{\frac{p}{2}+\alpha}|\nabla\eta|^2\chi
				+2(n-1)\left(\kappa+2r^2\|b\|^2_{1,\infty}\right)\int_{\Omega} f^{\frac{p}{2}+\alpha}\eta^2\chi
				\\
				&
				+a_4\|b\|^2_{1,\infty}\int_{\Omega} |u|^{2r}f^{\alpha +\frac{p}{2}}\eta^2\chi,
			\end{split}
		\end{align}
		where we choose a suitable constant $a_3$ depending only on $n,p$ and $q$ such that
		$$
		\frac{4(p+1)^2}{a_1\alpha}+\frac{4a_1\alpha}{(2\alpha+p)^2} \leq \frac{a_3}{\alpha},
		$$
		and
		$$
		a_4 = \frac{16}{n-1}+4(n-1)+4r^2(n-1).
		$$
	\end{proof}

	\begin{lem}
		Assume the same conditions as in \lemref{31} are satisfied and additionally $q\geq 1$. Then, there exist constants $a_4$, $a_5$ and $a_6$, which depend only on $n$, $p$, $q$ and $r$, such that 
		\begin{align*}
			\begin{split}
				&\frac{2a_5}{\alpha}e^{-c_0(1+\sqrt{\kappa} R)}V^{\frac{2}{n}}R^{-2}\left(\int_{\Omega}(\chi f)^{\frac{n}{n-2} (\frac{p}{2}+\alpha)}\eta^{\frac{2n}{n-2}}\right)^{\frac{n-2}{n}}+\frac{1}{n-1}\int_{\Omega}(\chi f)^{\alpha+q-\frac{p}{2}+1}\eta^2\\
				\leq&\
				a_6\left(\frac{(1+\sqrt{\kappa}R)^2}{R^{2}}+B\right)\int_{\Omega} (\chi f)^{\frac{p}{2}+\alpha}\eta^2 +\frac{a_3}{\alpha}\int_M(\chi f)^{\frac{p}{2}+\alpha}|\nabla\eta|^2 \\
				&
				+a_4 B\int_M|u|^{2r}f^{\alpha+\frac{p}{2}}\eta^2\chi,
			\end{split}
		\end{align*}
		where $B = \|b\|^2_{1,\infty}.$
	\end{lem}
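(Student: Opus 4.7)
My plan is to derive the stated inequality by combining Saloff-Coste's Sobolev inequality (\lemref{salof}) with the integral bound \eqref{equa42} of \lemref{41}, mirroring the passage from \eqref{equa3.3} to \eqref{44} in \S\ref{sect3.1} but now with the characteristic $\chi=\chi_{\{f\geq 1\}}$ present throughout. The first step is to bypass the non-smoothness of $\chi$ (which is not in $W^{1,2}$) by using the Lipschitz truncation $\tilde f:=\max(f,1)$, whose weak gradient satisfies $\nabla\tilde f=\chi\nabla f$. The test function $\phi:=\tilde f^{\alpha/2+p/4}\eta\in W^{1,2}_0(\Omega)$ then obeys the a.e.\ identity
$$|\nabla\phi|^2 = \chi\bigl|\nabla(f^{\alpha/2+p/4}\eta)\bigr|^2 + (1-\chi)|\nabla\eta|^2$$
together with the pointwise bound $\phi\geq \chi f^{\alpha/2+p/4}\eta$, so that $\|\phi\|_{L^{2n/(n-2)}}^2$ dominates the target integral $\bigl(\int(\chi f)^{n(p/2+\alpha)/(n-2)}\eta^{2n/(n-2)}\bigr)^{(n-2)/n}$, and $\tilde f^{p/2+\alpha} = (\chi f)^{p/2+\alpha}+(1-\chi)$ bounds the $R^{-2}$ remainder of Sobolev.

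The second step is to apply \lemref{salof} to $\phi$, multiply the resulting inequality by $\frac{2a_5}{\alpha}e^{-c_0(1+\sqrt\kappa R)}V^{2/n}R^{-2}$, and select $a_5$ so that $\frac{2a_5}{\alpha}\leq\frac{2a_1\alpha}{(2\alpha+p)^2}$. This choice is feasible because \eqref{cond41} already forces $\alpha$ to be bounded below by a constant depending only on $n,p,q,r$, on which range $\alpha^2/(2\alpha+p)^2$ has a positive universal lower bound. After adding $\frac{1}{n-1}\int(\chi f)^{\alpha+q-p/2+1}\eta^2$ to both sides, the coefficient of $\int\chi|\nabla(f^{\alpha/2+p/4}\eta)|^2$ matches exactly the one on the LHS of \eqref{equa42}, so that term can be replaced by the RHS of \eqref{equa42}. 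The $\kappa$ from \eqref{equa42} and the $R^{-2}$ from Sobolev both satisfy $\kappa+R^{-2}\lesssim (1+\sqrt\kappa R)^2/R^2$, the $r^2\|b\|^2_{1,\infty}$ contribution becomes the $B$ piece, and the constant $r$ is absorbed into $a_6$, giving the stated coefficient $a_6\bigl((1+\sqrt\kappa R)^2/R^2 + B\bigr)$.

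The main obstacle is the bookkeeping of the stray $(1-\chi)$-pieces that Sobolev generates, namely the $(1-\chi)|\nabla\eta|^2$ term inside $|\nabla\phi|^2$ and the $R^{-2}(1-\chi)\eta^2$ term coming from $\tilde f^{p/2+\alpha}$. Because $(\chi f)^{p/2+\alpha}$ vanishes on $\{f<1\}$, these contributions cannot be dominated pointwise by anything on the claimed RHS. I would control them by the crude bounds $\int(1-\chi)|\nabla\eta|^2\leq\int|\nabla\eta|^2$ and $R^{-2}\int(1-\chi)\eta^2\leq R^{-2}\int\eta^2$ and then absorb them, after enlarging $a_3$ and $a_6$ if needed, into the $\frac{a_3}{\alpha}\int(\chi f)^{p/2+\alpha}|\nabla\eta|^2$ and $a_6(1+\sqrt\kappa R)^2/R^2\int(\chi f)^{p/2+\alpha}\eta^2$ terms, using the lower bound on $\alpha$ and the compact support of $\eta$ in $B_R(o)$. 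This absorption step — rather than the Sobolev–integral identity manipulation — is where the most care is needed, since the stray terms carry genuinely different support from the dominating ones.
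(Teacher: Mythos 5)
Your overall strategy (Saloff-Coste applied to a truncated test function, combined with \eqref{equa42}, then the choices $\frac{2a_5}{\alpha}\leq\frac{2a_1\alpha}{(2\alpha+p)^2}$ and $a_6\bigl(\frac{(1+\sqrt{\kappa}R)^2}{R^2}+B\bigr)\geq 2(n-1)(\kappa+2r^2B)+\frac{2a_1\alpha}{(2\alpha+p)^2}R^{-2}$) is the same as the paper's, and that part is fine. The gap is exactly where you yourself locate the difficulty: the absorption of the stray $(1-\chi)$-terms cannot be carried out. After applying \lemref{salof} to $\phi=\tilde f^{\alpha/2+p/4}\eta$ you are left with $\frac{2a_5}{\alpha}\int_{\Omega}(1-\chi)|\nabla\eta|^2$ and $\frac{2a_5}{\alpha}R^{-2}\int_{\Omega}(1-\chi)\eta^2$ on the right, and these are integrals over $\{f<1\}$, whereas every term on the claimed right-hand side of the lemma is an integral over $\{f\geq1\}$ (each carries a factor $\chi$ or $(\chi f)^{\text{positive power}}$). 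No enlargement of $a_3$ or $a_6$ and no lower bound on $\alpha$ can dominate an integral over one set by an integral over its complement: in the extreme case $f<1$ on $\operatorname{supp}\eta$ the claimed right-hand side is identically zero while $\int(1-\chi)|\nabla\eta|^2>0$. The "crude bounds" $\int(1-\chi)|\nabla\eta|^2\leq\int|\nabla\eta|^2$ do not help, since $\int|\nabla\eta|^2$ is not on the claimed right-hand side either. So your chain proves a strictly weaker inequality (the lemma's estimate plus two unabsorbable remainders), not the lemma.

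The paper sidesteps this entirely by never introducing the truncation at the Sobolev step: it applies Saloff-Coste's inequality on the open set $\Omega_1=\{f>1\}$ to the untruncated function $f^{p/4+\alpha/2}\eta$ (invoking the Remark after \lemref{salof} for functions in $W^{1,2}(\Omega_1)$), rewrites both sides using $\chi^t=\chi$, and then plugs in \eqref{equa42}; no $(1-\chi)$ terms ever arise. If you want to keep your globally defined test function, you must modify it so that it vanishes off $\{f\geq1\}$ — e.g.\ work with $(\tilde f^{\,\alpha/2+p/4}-1)\eta$ — but then $\|\phi\|_{L^{2n/(n-2)}}$ no longer dominates the target integral on the band $\{1\leq f\leq 2^{2/(2\alpha+p)}\}$ and additional work is needed there; the restriction-to-$\Omega_1$ route is the one that closes cleanly.
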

	
	\begin{proof}
		By \lemref{salof}, there holds
		\begin{align*}
			\left(\int_{\Omega_1}f^{\frac{n}{n-2}(\frac{p}{2}+\alpha)}\eta^{\frac{2n}{n-2}}\right)^{\frac{n-2}{n}}
			\leq e^{c_0(1+\sqrt{\kappa} R)}V^{-\frac{2}{n}}\left(R^2\int_{\Omega_1}\left| \nabla \left(  f^{\frac{p}{4}+\frac{\alpha}{2}}\eta\right)\right|^2+\int_{\Omega_1}f^{\frac{p}{2}+\alpha}\eta^2\right),
		\end{align*}
		where $\Omega_1=\{x\in\Omega:f(x)> 1\}$. Since $\chi^t=\chi, \forall t>0$, the above inequality can be re-expressed as
		\begin{align*}
			\left(\int_{\Omega}(\chi f)^{\frac{n}{n-2}(\frac{p}{2}+\alpha)}\eta^{\frac{2n}{n-2}}\right)^{\frac{n-2}{n}}
			\leq e^{c_0(1+\sqrt{\kappa} R)}V^{-\frac{2}{n}}\left(R^2\int_{\Omega}\left|\chi\nabla \left(  f^{\frac{p}{4}+\frac{\alpha}{2}}\eta\right)\right|^2+\int_{\Omega}(\chi f)^{\frac{p}{2}+\alpha}\eta^2\right).
		\end{align*}
		It follows from \eqref{he} and the above  inequality that
		\begin{align}\label{he32}
			\begin{split}
				&\frac{2a_1\alpha}{(2\alpha+p)^2}e^{-c_0(1+\sqrt{\kappa} R)}V^{\frac{2}{n}}R^{-2}\left(\int_{\Omega}(\chi f)^{\frac{n}{n-2}(\frac{p}{2}+\alpha)}\eta^{\frac{2n}{n-2}}\right)^{\frac{n-2}{n}}
				+ \frac{1}{n-1}\int_{\Omega}f^{\alpha+q-\frac{p}{2}+1}\eta^2\chi\\
				\leq &\
				\frac{a_3}{\alpha}\int_{\Omega}f^{\frac{p}{2}+\alpha}|\nabla\eta|^2\chi
				+2(n-1)\left(\kappa+2r^2\|b\|^2_{1,\infty}\right)\int_{\Omega} f^{\frac{p}{2}+\alpha}\eta^2\chi
				\\
				&
				+a_4\|b\|^2_{1,\infty}\int_{\Omega} |u|^{2r}f^{\alpha +\frac{p}{2}}\eta^2\chi
				+\frac{2a_1\alpha}{(2\alpha+p)^2R^2}\int_{\Omega}(\chi f)^{\frac{p}{2}+\alpha}\eta^2.
			\end{split}
		\end{align}
		We choose suitable $a_5$ and $a_6$ depending only on $n$, $p$, $q$ and $r$ such that
		\begin{align*}
			\frac{2a_5}{\alpha}\leq \frac{2a_1\alpha}{(2\alpha+p)^2}
			\qquad \mbox{and} \qquad
			a_6\left(\frac{(1+\sqrt{\kappa}R)^2}{R^{2}}+B\right)\geq 2(n-1)(\kappa+2r^2B)+\frac{2a_1\alpha}{(2\alpha+p)^2}R^{-2},
		\end{align*}
		then it follows that
		\begin{align}\label{equ32}\begin{split}
				&\frac{2a_5}{\alpha}e^{-c_0(1+\sqrt{\kappa} R)}V^{\frac{2}{n}}R^{-2}\left(\int_{\Omega}(\chi f)^{\frac{n}{n-2} (\frac{p}{2}+\alpha)}\eta^{\frac{2n}{n-2}}\right)^{\frac{n-2}{n}}+\frac{1}{n-1}\int_{\Omega}(\chi f)^{\alpha+q-\frac{p}{2}+1}\eta^2\\
				\leq&\
				a_6\left(\frac{(1+\sqrt{\kappa}R)^2}{R^{2}}+B\right)\int_{\Omega} (\chi f)^{\frac{p}{2}+\alpha}\eta^2 +\frac{a_3}{\alpha}\int_M(\chi f)^{\frac{p}{2}+\alpha}|\nabla\eta|^2\\
				&+a_4 B\int_M|u|^{2r}f^{\alpha+\frac{p}{2}}\eta^2\chi.
			\end{split}
		\end{align}
		Thus, we finish the proof of the lemma.
	\end{proof}
	
	\subsection{The case of $u\in L^k(B_R(o))$}
	To deal with the terms involved with $u$, i.e., the last term on the right-hand side of \eqref{equ32}, we use a technique introduced by Wang-Wang(\cite{wang2021gradient1}). The key point is that we use the first term on the left-hand side of \eqref{equ32} to absorb the high-order term resulting from the last term on the right-hand side of \eqref{equ32}.
	
	\begin{lem}\label{lem4.2}
		Assume $u$ is a solution to equation \eqref{equ0} in $B_R(o)$ with $p>1, q\geq 1$ and $r\geq 1$ and $u\in C^1(B_R(o))\cap L^k(B_R(o))$ where $k\geq 2rn$. Let $f$ and $\chi$ be the same as in the above lemma. Let $\alpha_0$ be the unique positive solution of the equation
		$$
		\alpha^2_0=c^2_1\left( 1+\sqrt{\kappa} R \right)^2+R^2 \|b\|^2_{1,\infty}\|u\|^{\frac{2k}{k/r-n}}_{k}(T\alpha_0)^{\frac{n }{k/r-n}},
		$$
		where the constant $c_1$ depends only on $n,p,q$ (see \eqref{alpha0cond}) and $T=e^{c_0(1+\sqrt{\kappa}R)}V^{-\frac{2}{n}}R^2$. Then, there holds true
		\begin{align*}
			\begin{split}
				&e^{-\alpha_0}V^{ \frac{2}{n}}\left(\int_{\Omega}(\chi f)^{\frac{n}{n-2}(\frac{p}{2}+\alpha)}\eta^{\frac{2n}{n-2}}\right)^{\frac{n-2}{n}}
				+a_{8}\alpha  R^{2}\int_{\Omega}(\chi f)^{\alpha+q-\frac{p}{2}+1}\eta^2\\
				\leq &\ a_{9}\left(\alpha_0^2\left(\frac{\alpha}{\alpha_0}\right)^{\frac{n}{2c-n}}\alpha\int_{\Omega} (\chi f)^{\frac{p}{2}+\alpha}\eta^2
				+ R^{2}\int_{\Omega}(\chi f)^{\frac{p}{2}+\alpha}|\nabla\eta|^2\right).
			\end{split}
		\end{align*}
	\end{lem}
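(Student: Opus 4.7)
The plan is to start from inequality \eqref{equ32} and handle its bad term
$$a_4 B\int_\Omega |u|^{2r}(\chi f)^{\alpha+p/2}\eta^2 \qquad (B:=\|b\|^2_{1,\infty})$$
via a H\"older--interpolation--Young scheme. The absorbable piece will be fed into the Sobolev term on the left-hand side of \eqref{equ32}, while the irreducible piece will be controlled through the fixed-point definition of $\alpha_0$.

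Setting $g:=(\chi f)^{\alpha+p/2}\eta^2$, I first apply H\"older's inequality with conjugate exponents $(k/(2r), k/(k-2r))$---admissible since $k\ge 2rn>2r$---to separate $|u|^{2r}$ from $g$:
$$\int_\Omega |u|^{2r}g \le \|u\|_k^{2r}\,\|g\|_{k/(k-2r)}.$$
The condition $k\ge 2rn$ also places $k/(k-2r)$ in $[1,n/(n-2)]$, so log-convexity of $L^p$-norms yields $\|g\|_{k/(k-2r)}\le \|g\|_1^{1-nr/k}\|g\|_{n/(n-2)}^{nr/k}$. Next, Young's inequality with exponents $(k/(nr), k/(k-nr))$ and the choice $\epsilon\sim(a_4B\|u\|_k^{2r}\alpha T)^{-1}$ (tuned to match the coefficient $2a_5/(\alpha T)$ of $\|g\|_{n/(n-2)}$ on the left of \eqref{equ32}) absorbs the Sobolev piece and leaves $\|g\|_1$ with a coefficient proportional to
$$C\bigl(B\|u\|_k^{2r}\bigr)^{k/(k-nr)}(\alpha T)^{n/(k/r-n)},$$
after rewriting via the identities $nr/(k-nr)=n/(k/r-n)$ and $2rk/(k-nr)=2k/(k/r-n)$.

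The crucial step is to invoke the fixed-point equation defining $\alpha_0$, which is engineered exactly so that
$R^2 B\|u\|_k^{2k/(k/r-n)}T^{n/(k/r-n)}\le \alpha_0^{2-n/(k/r-n)}$; this turns the above coefficient into one of the shape $\alpha_0^2(\alpha/\alpha_0)^{n/(2c-n)}R^{-2}$ with $2c=k/r$. The simpler $\kappa$- and $B$-terms $a_6\bigl((1+\sqrt\kappa R)^2/R^2+B\bigr)\int g$ already on the right of \eqref{equ32} are dominated by the same bound via $\alpha_0\ge c_1(1+\sqrt\kappa R)$ and the same identity. Multiplying the resulting inequality through by $\alpha R^2$ and using $\alpha_0\ge c_0(1+\sqrt\kappa R)$ to replace the factor $R^2/T$ by $e^{-\alpha_0}V^{2/n}$ (at the cost of a harmless universal constant) converts the left-hand side to the stated $e^{-\alpha_0}V^{2/n}\|g\|_{n/(n-2)}+a_8\alpha R^2\int(\chi f)^{\alpha+q-p/2+1}\eta^2$.

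The main obstacle is the exponent bookkeeping in the Young step: the implicit definition of $\alpha_0$ is tailored so that the composite factor $(\alpha T)^{n/(k/r-n)}$ repackages cleanly into $\alpha^{n/(2c-n)}\alpha_0^{2-n/(2c-n)}R^{-2}$. Keeping the iteration parameter $\alpha$ cleanly separated from the fixed base $\alpha_0$ is essential, since the subsequent Nash--Moser iteration runs $\alpha\to\infty$ while $\alpha_0$ is held fixed; the factor $(\alpha/\alpha_0)^{n/(2c-n)}$ is precisely the growth in $\alpha$ that the iteration can tolerate.
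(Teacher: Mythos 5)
Your proposal reproduces the paper's proof essentially step for step: the same starting point \eqref{equ32}, the same H\"older/interpolation/Young treatment of $\int|u|^{2r}(\chi f)^{\alpha+p/2}\eta^2$ with $c=k/(2r)$ and interpolation weight $nr/k$, the same choice of $\epsilon$ to absorb the $L^{n/(n-2)}$ piece into the Sobolev term, and the same use of the fixed-point definition of $\alpha_0$ to repackage the residual coefficient as $\alpha_0^2(\alpha/\alpha_0)^{n/(2c-n)}R^{-2}$ before multiplying through by $\alpha R^2$. The only divergence is bookkeeping you actually get right: inverting $\epsilon$ correctly yields the power $\bigl(\|b\|_{1,\infty}^2\|u\|_k^{2r}\bigr)^{k/(k-nr)}$, whereas the paper silently drops a factor $\|b\|_{1,\infty}^{2n/(2c-n)}$ at that step and hence states the fixed-point equation with $\|b\|_{1,\infty}^2$ only; this is harmless, as either convention works after adjusting the definition of $\alpha_0$ (and both write-ups share the same slight looseness in absorbing the standalone $a_6\|b\|_{1,\infty}^2$ term from \eqref{equ32}).
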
	
	
	\begin{proof}
		For the third term on the right-hand side of \eqref{equ32}, we choose $\mu$, $\nu\in(1,\infty)$ such that
		$$\frac{1}{\mu}+\frac{1}{\nu}=1,\quad\quad  \frac{1}{\mu}+ \frac{n}{\nu(n-2)}=\frac{c}{c-1},$$
		where $c = \frac{k}{2r}>n$. Then, by H\"older and Young inequalities we have
		\begin{align}
			\label{equ10}
			\begin{split}
				\int_M |u|^{2r}(\chi f)^{\alpha+\frac{p}{2}}\eta^2
				\leq &\ \| u^{2r}\|_c\left(\int_{\Omega}\left(\eta^2 (\chi f)^{\alpha+\frac{p}{2}}\right)^{\frac{c}{c-1}}\right)^{\frac{c-1}{c}}\\
				\leq &\
				\|u\|^{2r}_{k}\left(\int_{\Omega}\eta^2 (\chi f)^{\alpha+\frac{p}{2}} \right)^{\frac{c-1}{c\mu}}
				\left(\int_{\Omega}\left(\eta^2 (\chi f)^{\alpha+\frac{p}{2}}\right)^{\frac{n}{n-2}}\right)^{\frac{c-1}{c\nu}}\\
				\leq&\
				\| u \|^{2r}_{k}\left[\epsilon^{-\frac{n }{2c-n}}\int_{\Omega}\eta^2 (\chi f)^{\alpha+\frac{p}{2}}
				+\epsilon\left(\int_{\Omega}\left(\eta^2 (\chi f)^{\alpha+\frac{p}{2}}\right)^{\frac{n}{n-2}}\right)^{\frac{n-2}{n}}\right].
			\end{split}
		\end{align}

		Now we choose $\epsilon$ small enough such that
		\begin{align}
			\label{ep1}
			\begin{split}
				a_4\|b\|^2_{1,\infty}\| u \|^{2r}_{k}\epsilon =\frac{a_5}{\alpha}e^{-c_0(1+\sqrt{\kappa} R)}V^{\frac{2}{n}}R^{-2} .
			\end{split}
		\end{align}
		Set $$T=T(R)=e^{c_0(1+\sqrt{\kappa} R)}V^{-\frac{2}{n}}R^{2},$$
		then we have
		$$
		\epsilon^{-\frac{n }{2c-n}}=  \left( \frac{a_4}{a_5}T\alpha \| u \|^{2r}_{k} \right)^{\frac{n }{2c-n}}.
		$$
		Now, let $\epsilon$ satisfy \eqref{ep1}, by substituting \eqref{equ10} into \eqref{equ32} we obtain
		\begin{align}
			\label{equa22}
			\begin{split}
				&\frac{a_5}{\alpha}e^{-c_0(1+\sqrt{\kappa}R)}V^{\frac{2}{n}}R^{-2}\left(\int_{\Omega}(\chi f)^{\frac{n}{n-2}(\frac{p}{2}+\alpha)}\eta^{\frac{2n}{n-2}}\right)^{\frac{n-2}{n}}
				+ \frac{1}{n-1}\int_{\Omega}(\chi f)^{\alpha+q-\frac{p}{2}+1}\eta^2\\
				\leq&\
				\left(a_6\frac{(1+\sqrt{\kappa}R)^2}{R^{2}}
				+a_4\|b\|^2_{1,\infty}\|u\|^{\frac{4rc}{2c-n}}_{k} (T\alpha)^{\frac{n }{2c-n}}\right)
				\int_{\Omega} (\chi f)^{\frac{p}{2}+\alpha}\eta^2 +\frac{a_3}{\alpha}\int_M(\chi f)^{\frac{p}{2}+\alpha}|\nabla\eta|^2\\
				\leq&\
				a_7 \left(\frac{(1+\sqrt{\kappa}R)^2}{R^{2}}
				+ \|b\|^2_{1,\infty}\|u\|^{\frac{4rc}{2c-n}}_{k}(T\alpha)^{\frac{n }{2c-n}}\right) \int_{\Omega} (\chi f)^{\frac{p}{2}+\alpha}\eta^2 +\frac{a_3}{\alpha}\int_{\Omega}(\chi f)^{\frac{p}{2}+\alpha}|\nabla\eta|^2,
			\end{split}
		\end{align}
		where $a_7>\max\{a_6,a_4\}$ depends only on $n,p,q$ and $r$. Now we define $c_1$ by
		\begin{align}\label{alpha0cond}
			c_1=\max\left\{\frac{4(n-1)a_2^2}{a_1 },\ 1+c_0\right\}.
		\end{align}
		Since $k\geq 2rn$ we have $\frac{n }{ 2c-n  }\leq 1$, thus the following equation
		\begin{align}\label{alpha0}
			\alpha^2_0=c^2_1\left( 1+\sqrt{\kappa} R \right)^2+R^2 \|b\|^2_{1,\infty}\|u\|^{\frac{4rc}{2c-n}}_{k}(T\alpha_0)^{\frac{n }{2c-n}}
		\end{align}
		has a unique positive solution $\alpha_0$. Then, it is easy to see that
		$$
		\alpha_0 \geq c_1(1+\sqrt{\kappa}R)\geq 1.
		$$
		It follows from $\alpha_0\geq 1$ and $\frac{n}{2c-n}\leq 1$ that
		\begin{align}\label{alpha0in1}
			\alpha^2_0\leq c^2_1\left( 1+\sqrt{\kappa} R \right)^2+R^2 \|b\|^2_{1,\infty}\|u\|^{\frac{4rc}{2c-n}}_{k}T^{\frac{n }{2c-n}} \alpha_0.
		\end{align}
		Consequently, one can see from the quadratic inequality \eqref{alpha0in1} about $\alpha_0$ that
		\begin{align}\label{alpha0u}
			\alpha_0
			\leq\ &c_1\left(1+\sqrt{\kappa}R\right)+R^2\|b\|^2_{1,\infty}\|u\|^{\frac{4rc}{2c-n}}_{k}T^{\frac{n }{2c-n}}.
		\end{align}
		For any $\alpha\geq\alpha_0$, it follows from \eqref{alpha0} that
		\begin{align*}
			\frac{(1+\sqrt{\kappa}R)^2}{R^{2}}
			+ \|b\|^2_{1,\infty}\|u\|^{\frac{4rc}{2c-n}}_{k}(T\alpha)^{\frac{n }{2c-n}}\leq \frac{\alpha_0^2}{R^2}\left(\frac{\alpha}{\alpha_0}\right)^{\frac{n}{2c-n}}.
		\end{align*}
		Hence, it follows from \eqref{equa22} that
		\begin{align}
			\begin{split}
				&\frac{a_5}{\alpha}e^{-\alpha_0}V^{\frac{2}{n}}R^{-2}\left(\int_{\Omega}(\chi f)^{\frac{n}{n-2}(\frac{p}{2}+\alpha)}\eta^{\frac{2n}{n-2}}\right)^{\frac{n-2}{n}}+\frac{1}{n-1}\int_{\Omega}(\chi f)^{\alpha+q-\frac{p}{2}+1}\eta^2
				\\
				\leq&\
				a_7  \frac{\alpha_0^2}{R^{2}}
				\left(\frac{\alpha}{\alpha_0}\right)^{\frac{n}{2c-n}}\int_{\Omega} (\chi f)^{\frac{p}{2}+\alpha}\eta^2 +\frac{a_3}{\alpha}\int_{\Omega}(\chi f)^{\frac{p}{2}+\alpha}|\nabla\eta|^2.
			\end{split}
		\end{align}
		Immediately, we can conclude that there holds true
		\begin{align}\label{equa3.14}
			\begin{split}
				&e^{-\alpha_0}V^{ \frac{2}{n}}\left(\int_{\Omega}(\chi f)^{\frac{n}{n-2}(\frac{p}{2}+\alpha)}\eta^{\frac{2n}{n-2}}\right)^{\frac{n-2}{n}}
				+a_{8}\alpha  R^{2}\int_{\Omega}(\chi f)^{\alpha+q-\frac{p}{2}+1}\eta^2
				\\
				\leq &\ a_{9}\left(
				\alpha_0^2\left(\frac{\alpha}{\alpha_0}\right)^{\frac{n}{2c-n}}\alpha\int_{\Omega} (\chi f)^{\frac{p}{2}+\alpha}\eta^2
				+
				R^{2}\int_{\Omega}(\chi f)^{\frac{p}{2}+\alpha}|\nabla\eta|^2
				\right).
			\end{split}
		\end{align}
		The proof of this lemma is finished.
	\end{proof}
	
	\subsubsection{$L^{\alpha_1}$ bound of gradient in a geodesic ball with radius $3R/4$}
	We first prove the following lemma.
	
	\begin{lem}\label{lpbound41}
		Let $(M,g)$ be a complete Riemannian manifold satisfying $\mathrm{Ric}_g\geq-(n-1)\kappa g$ for some constant $\kappa\geq0$. Assume that $u$ and $f$ are the same as in  \lemref{lem4.2}. If $b\in W^{1,\infty}(B_R(o))$ is a real function and the constants $p$, $q$ and $r$ associated with \eqref{equ0} satisfy
		\begin{align*}
			p>1,\quad q>\max\{1,p-1\}\quad\text{and}\quad r\geq 1.
		\end{align*}
		then there exist $\alpha_1 = \left(\alpha_0+\frac{p}{2}\right)\frac{n}{n-2}$ and a constant $a_{12}$, which depends only on $n,p,q$ and $r$, such that
		\begin{align}
			\|\chi f\|_{L^{\alpha_1}(B_{\frac{3R}{4}}(o))}\leq a_{12}V^{\frac{1}{\alpha_1}}\left(\frac{\alpha^2_0}{R^2}\right)^{\frac{1}{q-p+1}}.
		\end{align}
		Here $\alpha_0$ is determined in the above \lemref{lem4.2}.
	\end{lem}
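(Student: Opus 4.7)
The plan is to follow the strategy used in Lemma \ref{lem42} (the $b\equiv 0$ case), but applied to the improved integral inequality from Lemma \ref{lem4.2} with $\chi f$ in place of $f$. First, I would set $\alpha=\alpha_0$ in the conclusion of Lemma \ref{lem4.2}. The exponent $(\alpha/\alpha_0)^{n/(2c-n)}$ then collapses to $1$, which gives the clean starting inequality
\begin{align*}
&e^{-\alpha_0}V^{\frac{2}{n}}\left(\int_{\Omega}(\chi f)^{\frac{n}{n-2}(\frac{p}{2}+\alpha_0)}\eta^{\frac{2n}{n-2}}\right)^{\frac{n-2}{n}}+a_{8}\alpha_0 R^{2}\int_{\Omega}(\chi f)^{\alpha_0+q-\frac{p}{2}+1}\eta^2\\
&\leq a_{9}\left(\alpha_0^3\int_{\Omega}(\chi f)^{\frac{p}{2}+\alpha_0}\eta^2+R^{2}\int_{\Omega}(\chi f)^{\frac{p}{2}+\alpha_0}|\nabla\eta|^2\right).
\end{align*}

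Next, I would split the domain into a super-level set $\Omega_1'=\{x\in\Omega:\chi f\geq(2a_{9}\alpha_0^2/(a_{8}R^2))^{1/(q-p+1)}\}$ and its complement $\Omega_2'$. On $\Omega_1'$ the integrand of the first term on the right is dominated by one half of the integrand of the second term on the left (this is where the hypothesis $q-p+1>0$ is essential to solve for $\chi f$). On $\Omega_2'$, the integrand is bounded above by a constant depending on $\alpha_0$ and $R$, so its integral is controlled by the volume $V$ of $\Omega$. This mirrors the splitting used at equation \eqref{equa:4.5}.

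Then I would choose a cutoff $\gamma\in C^{\infty}_0(B_R(o))$ with $\gamma\equiv 1$ on $B_{3R/4}(o)$, $0\leq \gamma\leq 1$, and $|\nabla\gamma|\leq C/R$, and take $\eta=\gamma^{m}$ with $m=(\alpha_0+q-p/2+1)/(q-p+1)$. Exactly as in the proof of Lemma \ref{lem42}, this choice of power gives $R^2|\nabla\eta|^2\leq a\cdot\alpha_0^2\,\eta^{(2\alpha_0+p)/(\alpha_0+q-p/2+1)}$, so H\"older's inequality and Young's inequality (with the obvious conjugate exponents $(\alpha_0+q-p/2+1)/(\alpha_0+p/2)$ and $(\alpha_0+q-p/2+1)/(q-p+1)$) allow me to absorb the $|\nabla\eta|$ term into the $L^{\alpha_0+q-p/2+1}$ term on the left at the cost of another volume contribution of the form $(\alpha_0/R)^{(p+2\alpha_0)/(q-p+1)} V$.

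Combining these, I retain only the first term on the left and obtain
\[
\left(\int_{B_{3R/4}(o)}(\chi f)^{\frac{n}{n-2}(\frac{p}{2}+\alpha_0)}\right)^{\frac{n-2}{n}}\leq a\, e^{\alpha_0}V^{1-\frac{2}{n}}\left(\frac{\alpha_0^2}{R^2}\right)^{\frac{p/2+\alpha_0}{q-p+1}},
\]
and raising this to the power $1/(\alpha_0+p/2)$ yields the claimed $L^{\alpha_1}$ estimate. The main technical obstacle is bookkeeping the powers of $\alpha_0$ so that, after extracting the $(\alpha_0+p/2)$-th root, the factor collapses exactly to $(\alpha_0^2/R^2)^{1/(q-p+1)}$; the weight $e^{\alpha_0}V^{1-2/n}$ turns into the stated $V^{1/\alpha_1}$ factor after accounting for the remaining $V^{-2/n}$ that is absorbed by the exponent $\alpha_1=(n/(n-2))(\alpha_0+p/2)$ and the fact that $e^{\alpha_0/\alpha_1}$ is bounded. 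No new ideas beyond those of Lemma \ref{lem42} are needed; the novelty lies only in having to propagate the cutoff $\chi$ carefully (which is harmless because $\chi^t=\chi$ for all $t>0$).
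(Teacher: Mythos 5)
Your proposal is correct and follows essentially the same route as the paper: set $\alpha=\alpha_0$ in the conclusion of Lemma \ref{lem4.2}, split $\Omega$ by the level set $\left(2a_9\alpha_0^2/(a_8R^2)\right)^{1/(q-p+1)}$, take $\eta=\gamma^{(\alpha_0+q-\frac{p}{2}+1)/(q-p+1)}$, absorb the gradient term via H\"older and Young, and extract the $(\alpha_0+\frac{p}{2})$-th root, with the bookkeeping of $e^{\alpha_0}$, $\alpha_0^3$, and $V^{1-2/n}$ handled exactly as you describe. No discrepancies worth noting.
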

	
	\begin{proof}
		We choose $\alpha=\alpha_0$ in \eqref{equa3.14} and we obtain
		\begin{align}\label{equation:4.14}
			\begin{split}
				&e^{-\alpha_0}V^{ \frac{2}{n}}\left(\int_{\Omega}(\chi f)^{\frac{n}{n-2}(\frac{p}{2}+\alpha_0)}\eta^{\frac{2n}{n-2}}\right)^{\frac{n-2}{n}}
				+a_{8}\alpha_0  R^{2}\int_{\Omega}(\chi f)^{\alpha_0+q-\frac{p}{2}+1}\eta^2
				\\
				\leq &\ a_{9}\left(
				\alpha_0^{3} \int_{\Omega} (\chi f)^{\frac{p}{2}+\alpha_0}\eta^2
				+
				R^{2}\int_{\Omega}(\chi f)^{\frac{p}{2}+\alpha_0}|\nabla\eta|^2
				\right).
			\end{split}
		\end{align}
		When
		$$
		f\geq \left(\frac{2a_{9} \alpha_0^{2}}{a_{8}R^2}\right)^{\frac{1}{q-p+1}},
		$$
		obviously we have
		$$
		a_9 \alpha_0^{3}(\chi f)^{\frac{p}{2}+\alpha_0}\eta^2\leq \frac{1}{2}a_{8}\alpha_0  R^{2} (\chi f)^{\alpha_0+q-\frac{p}{2}+1}\eta^2.
		$$
		Now, we decompose $\Omega$ into $\Omega=\Omega_1\cup\Omega_2$, where
		$$\Omega_1 = \left\{f\geq \left(\frac{2a_{9} \alpha_0^{2}}{a_{8}R^2}\right)^{\frac{1}{q-p+1}}\right\}
		$$
		and $\Omega_2$ is the complement of $\Omega_1$, then we have
		\begin{align}\label{equation:4.15}
			a_{9}
			\alpha_0^{3} \int_{\Omega_1} (\chi f)^{\frac{p}{2}+\alpha_0}\eta^2
			\leq &\ \frac{a_{8}\alpha_0  R^{2}}{2}
			\int_{\Omega}(\chi f)^{\alpha_0+q-\frac{p}{2}+1}\eta^2
		\end{align}
		and
		\begin{align}\label{equation:4.16}
			a_{9}
			\alpha_0^{3} \int_{\Omega_2} (\chi f)^{\frac{p}{2}+\alpha_0}\eta^2
			\leq &\ a_{9}
			\alpha_0^{3} V\left(\frac{2a_{9} \alpha_0^{2}}{a_{8}R^2}\right)^{\frac{\alpha_0+\frac{p}{2}}{q-p+1}},
		\end{align}		
		where $V$ is the volume of $\Omega$. Plugging \eqref{equation:4.15} and \eqref{equation:4.16} into \eqref{equation:4.14} leads to
		\begin{align}\label{equa24}
			\begin{split}
				&e^{-\alpha_0}V^{ \frac{2}{n}}\left(\int_{\Omega}(\chi f)^{\frac{n}{n-2}(\frac{p}{2}+\alpha_0)}\eta^{\frac{2n}{n-2}}\right)^{\frac{n-2}{n}}
				+\frac{1}{2}a_{8}\alpha_0  R^{2}\int_{\Omega}(\chi f)^{\alpha_0+q-\frac{p}{2}+1}\eta^2
				\\
				\leq &a_{9} R^{2}\int_{\Omega}(\chi f)^{\frac{p}{2}+\alpha_0}|\nabla\eta|^2
				+\frac{1}{2}a_{8}\alpha_0 R^2V \left(\frac{2a_{9}\alpha_0^2}{a_{8}R^2}\right)^{\frac{\alpha_0+q-\frac{p}{2}+1}{q-p+1}}.
			\end{split}
		\end{align}
		
		Let the cut-off function $\gamma\in C^{\infty}_0(B_R(o))$ satisfy
		$$
		\begin{cases}
			0\leq\gamma\leq 1, \quad
			\gamma\equiv 1\text{ in }  B_{\frac{3R}{4}}(o);\\
			|\nabla\gamma|\leq\frac{C }{R},
		\end{cases}
		$$
		and
		$$\eta = \gamma^{\frac{\alpha_0+q-\frac{p}{2}+1}{q-p+1}}.$$
		Then we have
		\begin{align}\label{314}
			a_{9}R^2|\nabla\eta|^2
			\leq
			a_{9} C^2 \left(\frac{\alpha_0+q-\frac{p}{2}+1}{q-p+1}\right)^2\eta^{\frac{2\alpha_0+p}{\alpha_0+q-\frac{p}{2}+1}}
			\leq
			a_{10}\alpha^2_0\eta^{\frac{2\alpha_0+p}{\alpha_0+q-\frac{p}{2}+1}}.
		\end{align}
		By H\"older  and Young inequalities, we obtain
		\begin{align}\label{315}
			\begin{split}
				a_{9}R^2\int_{\Omega}(\chi f)^{\frac{p}{2}+\alpha}|\nabla\eta|^2
				\leq&\
				a_{10}\alpha_0^2 \int_{\Omega}(\chi f)^{\frac{p}{2}+\alpha}\eta^{\frac{2\alpha+p}{\alpha_0+q-\frac{p}{2}+1}}\\
				\leq&\
				a_{10}\alpha_0^2 \left(\int_{\Omega}(\chi f)^{\alpha_0+q-\frac{p}{2}+1}\eta^2\right)^{\frac{\alpha_0+\frac{p}{2}}{\alpha_0+q-\frac{p}{2}+1}}V^{\frac{q-p+1}{\alpha_0+q-\frac{p}{2}+1}}\\
				\leq &\
				\frac{a_8\alpha_0R^2}{2}\left[\int_{\Omega}(\chi f)^{q-\frac{p}{2}+\alpha_0+1}\eta^2
				+
				\left(\frac{2a_{10}\alpha_0 }{a_8R^2}\right)^{\frac{\alpha_0+q-\frac{p}{2}+1}{q-p+1}}V\right].
			\end{split}
		\end{align}
		It follows from \eqref{equa24} and \eqref{315} that
		\begin{align}
			\label{equ:316}
			\begin{split}
				&\left(\int_{\Omega}(\chi f)^{\frac{n(\frac{p}{2}+\alpha_0)}{n-2}}\eta^{\frac{2n}{n-2}}\right)^{\frac{n-2}{n}}
				\\
				&\leq e^{ \alpha_0}V^{1-\frac{2}{n}}
				\left[\frac{a_8\alpha_0R^2}{2} \left(\frac{2a_{9}\alpha_0^2}{a_{8}R^2}\right)^{\frac{\alpha_0+q-\frac{p}{2}+1}{q-p+1}}
				+
				\frac{a_8\alpha_0R^2}{2}\left(\frac{2a_{10}\alpha_0 }{a_{9}R^2}\right)^{\frac{\alpha_0+q-\frac{p}{2}+1}{q-p+1}}
				\right].
			\end{split}
		\end{align}
		Since $\alpha_0\geq 1$, then it is not difficult to see from \eqref{equ:316} that
		\begin{align*}
			\left(\int_{\Omega}(\chi f)^{\frac{n}{n-2}(\frac{p}{2}+\alpha_0)}\eta^{\frac{2n}{n-2}}\right)^{\frac{n-2}{n}}
			\leq a_{11}^{\alpha_0}\alpha_0^3e^{ \alpha_0}V^{1-\frac{2}{n}}
			\left(\frac{\alpha_0^2}{ R^2}\right)^{\frac{\alpha_0+ \frac{p}{2} }{q-p+1}}.
		\end{align*}
		Recalling that $\alpha_1 = \frac{n}{n-2}\left(\alpha_0+\frac{p}{2}\right)$, we have
		\begin{align*}
			\|f\chi\|_{L^{\alpha_1}(B_{\frac{3R}{4}}(o))}
			&\leq a_{12}V^{\frac{1}{\alpha_1}}\left(\frac{\alpha^2_0}{R^2}\right)^{\frac{1}{q-p+1}}.
		\end{align*}
		Thus, we completed the proof of the lemma.
	\end{proof}

	\subsubsection{Moser Iteration (Proof of \thmref{t4})}
	Now we turn to use the Moser iteration to deduce the local $L^{\infty}$-estimate of $f$.
	\begin{proof}
		By discarding the second term on the left-hand side of \eqref{equa3.14}, we have		
		\begin{align}\label{equ31}
			\left(\int_{\Omega}(\chi f)^{\frac{n}{n-2}(\frac{p}{2}+\alpha)}\eta^{\frac{2n}{n-2}}\right)^{\frac{n-2}{n}}\leq\ &a_{9}e^{ \alpha_0} V^{-\frac{2}{n}}\left[R^{2}\int_M(\chi f)^{\frac{p}{2}+\alpha}|\nabla\eta|^2+K\alpha\int_{\Omega} (\chi f)^{\frac{p}{2}+\alpha}\eta^2 \right],
		\end{align}
		where $$K= K(\alpha) = (1+\sqrt{\kappa}R)^2 + R^2\|b\|^2_{1,\infty}\|u\|^{\frac{4rc}{2c-n}}_{k}(T\alpha)^{\frac{n }{2c-n}}.$$
		
		Define the sequence $\alpha_{l+1}=\alpha_l\frac{n}{n-2}$ for $l= 1,2,\cdots$. For the sequence of geodesic balls
		$$
		\Omega_l = B\left(o,\frac{R}{2}+\frac{R}{4^{l }}\right),\quad l=1, 2\ldots,
		$$
		we choose a sequence of cut-off functions $\eta_l\in C^{\infty}_0(\Omega_l)$ such that
		\begin{align*}
			\eta\equiv1 \text{ in }\Omega_{l+1}, \quad 0\leq\eta_l\leq 1\quad\text{and}\quad  |\nabla\eta_l|\leq\frac{C4^l}{R}.
		\end{align*}
		Taking $\alpha+\frac{p}{2}=\alpha_l$ and $\eta=\eta_l$ in \eqref{equ31}. Since
		\begin{align*}
			K(\alpha_l)\alpha_l \eta^2= & \left((1+\sqrt{\kappa}R)^2 +\|b\|^2_{1,\infty}\|u\|^{\frac{4rc}{2c-n}}_{k} R^2\left(T\left(\alpha_0+\frac{p}{2}\right)\left(\frac{n}{n-2}\right)^l\right)^{\frac{n }{2c-n}}\right)\alpha_l\\
			\leq &\left((1+\sqrt{\kappa}R)^2 +\|b\|^2_{1,\infty}\|u\|^{\frac{4rc}{2c-n}}_{k} R^2\left(T\left(\alpha_0+\frac{p}{2}\right) \right)^{\frac{n }{2c-n}}\right)\left(\frac{n}{n-2}\right)^{\frac{2cl}{2c-n}}\alpha_l\\
			\leq&\ a_{13}\alpha_0^2\left(\frac{n}{n-2}\right)^{\frac{2cl}{2c-n}}\alpha_l,
		\end{align*}
		we have
		\begin{align*}
			R^{2}|\nabla\eta_l|^2
			+ K(\alpha_l)\alpha_l \eta^2\leq &\ C^216^l +  a_{13}\alpha_0^2\left(\frac{n}{n-2}\right)^{\frac{2cl}{2c-n}}\alpha_l
			\\
			\leq &\  a_{14}^l\alpha_0^2\alpha_1.
		\end{align*}
		It follows from the above that
		\begin{align}\label{nashmoser}
			\begin{split}
				\left(\int_{\Omega_{l+1}}(\chi f)^{\alpha_{l+1}}\right)^{\frac{1}{\alpha_{l+1}}}
				\leq&\
				\left(a_{9}e^{ \alpha_0}V^{-\frac{2}{n}} \alpha_0^2\alpha_1\right)^{\frac{1}{\alpha_l}}
				a_{14}^{\frac{l}{\alpha_l}}\left(\int_{\Omega_l}(\chi f)^{\alpha_l}\right)^{\frac{1}{\alpha_l}}.
			\end{split}
		\end{align}
		By the facts
		\begin{align*}
			\sum_{l=1}^\infty\frac{1}{\alpha_l} = \frac{n}{2\alpha_1} \quad  \text{ and }\quad \sum_{l=1}^\infty\frac{l}{\alpha_l} = \frac{n^2}{4\alpha_1}< \frac{n(n-2)}{2p}
		\end{align*}
		and taking standard iteration, we obtain
		\begin{align}\label{iter4.1}
			\begin{split}
				\|\chi f\|_{L^{\infty}(B_{\frac{R}{2}}(o))}\leq&\
				\left(a_{9}e^{ \alpha_0}V^{-\frac{2}{n}} \alpha_0^2\alpha_1\right)^{\sum_{l=1}^\infty\frac{1}{\alpha_l}}
				a_{14}^{\sum_{l=1}^\infty\frac{l}{\alpha_l}}\|f\|_{L^{\alpha_1}(B_{\frac{3R}{4}}(o))}
				\\
				\leq&\ a_{15}
				V^{-\frac{1}{\alpha_1}}  \|f\|_{L^{\alpha_1}(B_{\frac{3R}{4}}(o))}.
			\end{split}
		\end{align}
		Furthermore, by \lemref{lpbound41} we can infer from \eqref{iter4.1} that
		\begin{align*}
			\|\chi f\|_{L^{\infty}(B_{\frac{R}{2}}(o))}
			\leq&\ a_{16} \left(\frac{\alpha^2_0}{R^2}\right)^{\frac{1}{q-p+1}} .
		\end{align*}
		In view of the upper bound of $\alpha_0$ given by \eqref{alpha0u}, we obtain
		\begin{align*}
			\sup_{B_{\frac{R}{2}}(o)}|\chi \nabla u|\leq a_{17}\left[\left(\frac{1+\sqrt{\kappa}R}{R}\right)^{\frac{1}{q-p+1}}
			+\left(R\|b\|^2_{1,\infty}\|u\|^{\frac{4rc}{2c-n}}_{k}T^{\frac{n }{2c-n}}\right)^{\frac{1}{q-p+1}}\right].
		\end{align*}
		Thus, we finish the proof of \thmref{t4}.
	\end{proof}
	
	\subsection{The case of $u\in L^\infty(B_R(o))$}
	\begin{prop}\label{lem5.1}
		Let $(M,g)$ be a complete Riemannian manifold with $\mathrm{Ric}_g\geq-(n-1)\kappa g$ for some constant $\kappa\geq0$. Suppose that $b\in W^{1,\infty}(B_R(o))$ is a real function and the constants $p$, $q$ and $r$ associated with \eqref{equ0} satisfy
		\begin{align}\label{condi44}
			p>1,\quad q>\max\{1,p-1\}\quad\text{and}\quad r\geq 1.
		\end{align}
		If $u\in C^1(B_R(o))\cap L^{\infty}(B_R(o))$ is a solution of \eqref{equ0}, then there exist two constants
		$$C=C(n,p,q,r)\quad\text{and}\quad  N =\left(\|b\|^2_{1,\infty}+\|b\|^2_{1,\infty}\|u\|^{2r}_{\infty}\right)^{\frac{1}{2(q-p+1)}}$$
		such that there holds true
		$$
		\sup_{B_{\frac{R}{2}}(o)}|\nabla u|\leq\max\left\{1,\, C \left[\left(\frac{1+\sqrt{\kappa}R}{R}\right)^{\frac{1}{q-p+1}}+N\right]\right\}
		$$
		where $V = {\rm{Vol} }(B_R(o))$ is the volume of the geodesic ball $B_R(o)$.
	\end{prop}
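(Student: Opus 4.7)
The plan is to follow the three-step scheme from the proof of \thmref{t4}---the base integral inequality in \lemref{41}, an $L^{\alpha_1}$-bound on $B_{3R/4}(o)$, and the Nash--Moser iteration---with the key simplification that the $L^\infty$ bound on $u$ lets us bypass the delicate H\"older interpolation of \lemref{lem4.2}.

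First, starting from the integral inequality \eqref{equa42}, I would use $|u|^{2r}\leq\|u\|_\infty^{2r}$ to absorb the last term into a constant multiple of $\int f^{p/2+\alpha}\eta^2\chi$. Combined with \lemref{salof} exactly as in the derivation of \eqref{equ32}, this yields, for every $\alpha$ satisfying \eqref{cond41}, an estimate of the form
\begin{align*}
&\frac{2a_5}{\alpha}e^{-c_0(1+\sqrt{\kappa}R)}V^{\frac{2}{n}}R^{-2}\left(\int_\Omega(\chi f)^{\frac{n}{n-2}(\alpha+\frac{p}{2})}\eta^{\frac{2n}{n-2}}\right)^{\frac{n-2}{n}}+\frac{1}{n-1}\int_\Omega(\chi f)^{\alpha+q-\frac{p}{2}+1}\eta^2\\
&\quad\leq\frac{a_3}{\alpha}\int_\Omega(\chi f)^{\frac{p}{2}+\alpha}|\nabla\eta|^2+a_6\left(\frac{(1+\sqrt{\kappa}R)^2}{R^2}+\|b\|_{1,\infty}^2\bigl(1+\|u\|_\infty^{2r}\bigr)\right)\int_\Omega(\chi f)^{\frac{p}{2}+\alpha}\eta^2.
\end{align*}

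Next, I would select $\alpha_0$ so that $\alpha_0\geq c_1(1+\sqrt{\kappa}R)$ controls the Saloff--Coste exponential and $\alpha_0^2/R^2$ dominates the bracketed coefficient on the right. The linear choice $\alpha_0:=c_1(1+\sqrt{\kappa}R)+c_2RN^{q-p+1}$ with $N$ as in the statement works, since $N^{2(q-p+1)}=\|b\|_{1,\infty}^2(1+\|u\|_\infty^{2r})$. Crucially, unlike in \lemref{lem4.2}, no transcendental equation as in \eqref{alpha0} is required, and no extra factor $(\alpha/\alpha_0)^{n/(2c-n)}$ contaminates the iteration. Taking $\alpha=\alpha_0$ with the cutoff $\eta=\gamma^{(\alpha_0+q-p/2+1)/(q-p+1)}$ on $B_{3R/4}(o)$ and running the Young--H\"older bootstrap of Section \ref{sect3.1} gives
$$\|\chi f\|_{L^{\alpha_1}(B_{3R/4}(o))}\leq a_{12}V^{\frac{1}{\alpha_1}}\left(\frac{\alpha_0^2}{R^2}\right)^{\frac{1}{q-p+1}},\qquad\alpha_1=\frac{n}{n-2}\left(\alpha_0+\frac{p}{2}\right).$$

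Finally, I would run the Nash--Moser iteration from Section \ref{sect3.3} with $\alpha_{l+1}=\frac{n}{n-2}\alpha_l$ on the nested balls $\Omega_l=B(o,R/2+R/4^l)$. The coefficient arising at each step grows at most like $a_{14}^l\alpha_0^2\alpha_1$, so the bounded sums $\sum 1/\alpha_l$ and $\sum l/\alpha_l$ control the iteration and produce $\|\chi f\|_{L^\infty(B_{R/2}(o))}\leq C(\alpha_0^2/R^2)^{1/(q-p+1)}$. Substituting the linear formula for $\alpha_0$ and taking square roots yields the claimed bound, while on the complement $\{f<1\}$ the trivial estimate $|\nabla u|<1$ is absorbed by the outer $\max\{1,\cdot\}$. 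The main obstacle is to verify that a linear (rather than transcendental) choice of $\alpha_0$ suffices; this rests on the observation that with $u\in L^\infty$ no H\"older interpolation is needed to control the $|u|^{2r}$ term, so the iteration coefficient depends only polynomially (not sub-exponentially) on $\alpha$, and the geometric decay $\sum l/\alpha_l<\infty$ is preserved.
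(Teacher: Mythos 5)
Your proposal follows essentially the same route as the paper: absorb the $|u|^{2r}$ term via the $L^\infty$ bound into the coefficient of $\int(\chi f)^{p/2+\alpha}\eta^2$, choose the explicit linear $\alpha_0 = c_1\bigl(1+\sqrt{\kappa}R + R\|b\|_{1,\infty}(1+\|u\|_\infty^{2r})^{1/2}\bigr)$ (which agrees with your $c_1(1+\sqrt{\kappa}R)+c_2RN^{q-p+1}$), derive the $L^{\alpha_1}$ bound on $B_{3R/4}(o)$, and run the standard Nash--Moser iteration. Your key observation---that the $L^\infty$ hypothesis removes the need for the H\"older interpolation and the transcendental equation \eqref{alpha0}, so that the iteration coefficients stay geometrically summable---is exactly the simplification the paper exploits.
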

	In order to prove the proposition, we need to establish the following lemma.
	
	\subsubsection{$L^{\alpha_1}$ bound of gradient in a geodesic ball with radius $3R/4$}
	\begin{lem}
		Assume that $(M, g)$ satisfies the same conditions as the above proposition. Let $f$ and $\chi$ be the same as before. If $u\in C^1(B_R(o))\cap L^{\infty}(B_R(o))$ is a solution to equation \eqref{equ0} with $p,q$ and $r$ satisfying \eqref{condi44}, then there exists a positive constant $\alpha_1=\left(\alpha_0+\frac{p}{2}\right)\frac{n}{n-2}$ such that
		\begin{align*}
			\|\chi f\|_{L^{\alpha_1}(B_{\frac{3R}{4}}(o))}\leq a_{12}V^{\frac{1}{\alpha_1}}\left(\frac{\alpha_0^2}{ R^2} \right)^{\frac{1}{q-p+1}}.
		\end{align*}
		Here $$\alpha_0 = c_1\left(1+\sqrt{\kappa}R+R \|b\|_{1,\infty}\left(1+ \|u\|^{2r}_{\infty}\right)^{\frac{1}{2}}\right).$$
		where $c_1$ is a constant depending only on $n$, $p$ and $q$ (see \eqref{c1def}).
	\end{lem}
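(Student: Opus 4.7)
The plan is to mirror the proof of \lemref{lpbound41}, replacing the H\"older--Young estimate used there for the integral $\int|u|^{2r}(\chi f)^{\alpha+p/2}\eta^2$ by the much simpler pointwise bound $|u|^{2r}\leq\|u\|_\infty^{2r}$. Concretely, I would first combine the basic integral inequality \eqref{equa42} of \lemref{41} with Saloff-Coste's Sobolev inequality (\lemref{salof}) exactly as in the derivation of \eqref{equ32}, and then estimate the term coming from $|u|^{2r}$ directly by $a_4\|b\|^2_{1,\infty}\|u\|_\infty^{2r}\int(\chi f)^{\alpha+p/2}\eta^2$. This lumps into the coefficient already sitting on $\int(\chi f)^{\alpha+p/2}\eta^2$, rather than forcing a higher-order power of $\chi f$ to appear as in \eqref{equ10}, so no additional absorption via the Sobolev term is needed.

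Next, I would pick $c_1=c_1(n,p,q,r)$ large enough so that
$$\alpha_0 := c_1\bigl(1+\sqrt{\kappa}R+R\|b\|_{1,\infty}(1+\|u\|_\infty^{2r})^{1/2}\bigr)$$
satisfies $\alpha_0\geq 1$, the condition \eqref{cond41} holds for all $\alpha\geq\alpha_0$, and the combined ``bad'' coefficient $2(n-1)\kappa+4(n-1)r^2\|b\|^2_{1,\infty}+a_4\|b\|^2_{1,\infty}\|u\|_\infty^{2r}+R^{-2}$ is dominated by some multiple of $\alpha_0^2/R^2$. With this choice, specializing the combined inequality to $\alpha=\alpha_0$ should produce the analogue of \eqref{equation:4.14}:
\begin{align*}
&e^{-\alpha_0}V^{2/n}\left(\int_\Omega(\chi f)^{\frac{n}{n-2}(p/2+\alpha_0)}\eta^{\frac{2n}{n-2}}\right)^{\frac{n-2}{n}}+a_8\alpha_0 R^2\int_\Omega(\chi f)^{\alpha_0+q-p/2+1}\eta^2\\
&\qquad\leq a_9\left(\alpha_0^3\int_\Omega(\chi f)^{p/2+\alpha_0}\eta^2+R^2\int_\Omega(\chi f)^{p/2+\alpha_0}|\nabla\eta|^2\right).
\end{align*}

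From this point the argument is the same decomposition/absorption used in \lemref{lpbound41}: split $\Omega$ into $\Omega_1=\{f\geq(2a_9\alpha_0^2/(a_8R^2))^{1/(q-p+1)}\}$ and its complement, so that the $\alpha_0^3\int(\chi f)^{p/2+\alpha_0}\eta^2$ contribution on $\Omega_1$ is absorbed into $\tfrac12 a_8\alpha_0 R^2\int(\chi f)^{\alpha_0+q-p/2+1}\eta^2$, while on $\Omega_2$ it is bounded by a volume-controlled multiple of $(\alpha_0^2/R^2)^{(\alpha_0+p/2)/(q-p+1)}V$. Taking the tailored cut-off $\eta=\gamma^{(\alpha_0+q-p/2+1)/(q-p+1)}$ with $\gamma\in C^\infty_0(B_R(o))$, $\gamma\equiv 1$ on $B_{3R/4}(o)$, $|\nabla\gamma|\leq C/R$, the $|\nabla\eta|$ term is handled via H\"older and Young exactly as in \eqref{315}, and a $1/(\alpha_0+p/2)$-th root yields the claimed bound.

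The principal technical point I expect to require care is the precise choice of $c_1$ so that the three requirements above are simultaneously met with constants independent of $R$, $\kappa$ and $\|u\|_\infty$, and so that the combinatorial factor $a_{11}^{\alpha_0}\alpha_0^3 e^{\alpha_0}$ raised to $1/(\alpha_0+p/2)$ stays uniformly bounded (the same mechanism as in \lemref{lpbound41}, guaranteed once $\alpha_0\geq 1$). Beyond this bookkeeping, no genuinely new arguments are needed: the decomposition, cut-off choice, and Stokes boundary handling are already in place from Sections \ref{sect3} and \ref{sect4}.
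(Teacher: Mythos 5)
Your proposal follows the paper's own proof essentially verbatim: the paper likewise passes from \eqref{equ32} to \eqref{441} by bounding $|u|^{2r}\leq\|u\|^{2r}_{\infty}$ pointwise so that this term merges into the coefficient of $\int(\chi f)^{p/2+\alpha}\eta^2$, chooses $c_1=\max\{c_0+1,\,4(n-1)a_2^2/a_1\}$ so that $\alpha_0^2/R^2$ dominates all the zeroth-order coefficients, sets $\alpha=\alpha_0$, and then runs the same large/small-$f$ decomposition with the cut-off $\eta=\gamma^{(\alpha_0+q-p/2+1)/(q-p+1)}$ and a final $1/(\alpha_0+p/2)$-th root. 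The approach and all key steps match; no gaps.
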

	
	\begin{proof}
		Since $u\in L^{\infty}(B_R(o))$, then we can infer from \eqref{equ32} that
		\begin{align}\label{441}
			\begin{split}
				&\frac{a_5}{\alpha}e^{-c_0(1+\sqrt{\kappa} R)}V^{\frac{2}{n}}R^{-2}\left(\int_{\Omega}(\chi f)^{\frac{n}{n-2} (\frac{p}{2}+\alpha)}\eta^{\frac{2n}{n-2}}\right)^{\frac{n-2}{n}}+\frac{1}{n-1}\int_{\Omega}(\chi f)^{\alpha+q-\frac{p}{2}+1}\eta^2\\
				\leq&\ a_7\left(\frac{(1+\sqrt{\kappa}R)^2}{R^{2}}+\|b\|^2_{1,\infty}+\|b\|^2_{1,\infty}\|u\|^{2r}_{\infty}\right)\int_{\Omega} (\chi f)^{\frac{p}{2}+\alpha}\eta^2 +\frac{a_3}{\alpha}\int_M(\chi f)^{\frac{p}{2}+\alpha}|\nabla\eta|^2,
			\end{split}
		\end{align}
		where $a_7$ depends on $n$, $M$, $p$, $q$ and $r$.
		
		Similarly, we choose
		\begin{align}\label{c1def}
			c_1 = \max\left\{c_0+1,\, \frac{4(n-1)a_2^2}{a_1 }\right\}
		\end{align}
		and set
		$$\alpha_0 = c_1\left(1+\sqrt{\kappa}R+R \|b\|_{1,\infty}\left(1+ \|u\|^{2r}_{\infty}\right)^{\frac{1}{2}}\right).$$
		The equation \eqref{441} becomes
		\begin{align}\label{equation:441}
			\begin{split}
				&\frac{a_5}{\alpha}e^{-\alpha_0}V^{\frac{2}{n}}R^{-2}\left(\int_{\Omega}(\chi f)^{\frac{n}{n-2} (\frac{p}{2}+\alpha)}\eta^{\frac{2n}{n-2}}\right)^{\frac{n-2}{n}}+\frac{1}{n-1}\int_{\Omega}(\chi f)^{\alpha+q-\frac{p}{2}+1}\eta^2\\
				\leq& a_7\frac{\alpha_0^2}{R^2}\int_{\Omega} (\chi f)^{\frac{p}{2}+\alpha}\eta^2 +\frac{a_3}{\alpha}\int_M(\chi f)^{\frac{p}{2}+\alpha}|\nabla\eta|^2.
			\end{split}
		\end{align}
		
		Now we denote $\alpha_1=\left(\alpha_0+\frac{p}{2}\right)\frac{n}{n-2}$ and next we will give a $L^{\alpha_1}$ bound of $f$. By choosing $\alpha=\alpha_0$ in \eqref{equation:441} and multiplying the both sides of \eqref{equation:441} by $\alpha_0/a_5$,  then we obtain	
		\begin{align}\label{442}
			\begin{split}
				&e^{-\alpha_0}V^{ \frac{2}{n}}\left(\int_{\Omega}(\chi f)^{\frac{n}{n-2}(\frac{p}{2}+\alpha_0)}\eta^{\frac{2n}{n-2}}\right)^{\frac{n-2}{n}}
				+a_{8}\alpha_0  R^{2}\int_{\Omega}(\chi f)^{\alpha_0+q-\frac{p}{2}+1}\eta^2
				\\
				\leq &\ a_{9}\left(
				\alpha^3_0 \int_{\Omega} (\chi f)^{\frac{p}{2}+\alpha_0}\eta^2
				+
				R^{2}\int_{\Omega}(\chi f)^{\frac{p}{2}+\alpha_0}|\nabla\eta|^2
				\right).
			\end{split}
		\end{align}
		Now, if
		$$
		f\geq \left( \frac{2a_9\alpha_0^2}{a_8R^2}\right)^{\frac{1}{q-p+1}},
		$$
		then we have
		$$
		a_{9}
		\alpha_0^3  (\chi f)^{\frac{p}{2}+\alpha_0}\eta^2
		\leq \frac{1}{2 } a_{8}\alpha_0  R^{2} (\chi f)^{\alpha_0+q-\frac{p}{2}+1}\eta^2.
		$$
		We decompose $\Omega$ into two subregions $\Omega_1$ and $\Omega_2$, where
		$$\Omega_1 =\left\{f\leq \left( \frac{2a_9\alpha_0^2}{a_8R^2}\right)^{\frac{1}{q-p+1}}\right\}$$
		and $\Omega_2$ is the complement of $\Omega_1$. We have
		\begin{align}\label{split400}
			\begin{split}
				a_{9}
				\alpha^3_0 \int_{\Omega} (\chi f)^{\frac{p}{2}+\alpha_0}\eta^2
				\leq&\ a_{9}
				\alpha^3_0 \int_{\Omega_1} (\chi f)^{\frac{p}{2}+\alpha_0}\eta^2+
				a_{9}\alpha^3_0 \int_{\Omega_2} (\chi f)^{\frac{p}{2}+\alpha_0}\eta^2
				\\
				\leq&\ a_{9}\alpha^3_0\left(\frac{2a_9\alpha_0^2}{a_8R^2} \right)^{\frac{\alpha_0+\frac{p}{2}}{q-p+1}}V + \frac{a_{8}\alpha_0R^{2}}{2}  \int_{\Omega}(\chi f)^{\alpha_0+q-\frac{p}{2}+1}\eta^2,
			\end{split}
		\end{align}
		where $V={\rm{Vol}}(\Omega)$ denote the volume of $\Omega$.
		
		Choose $\gamma\in C_0^{\infty}(B_R(o))$ such that
		$$
		\begin{cases}
			0\leq \gamma(x)\leq 1, \quad |\nabla\gamma(x)|\leq\frac{C }{R}, \quad &\forall x\in B_R(o);\\
			\gamma(x)\equiv 1,\quad &\forall x\in B_{\frac{3R}{4}}(o),
		\end{cases}
		$$
		and set $$\eta = \gamma^{\frac{\alpha_0+q-\frac{p}{2}+1}{q-p+1}}.$$
		Direct computation shows that
		\begin{align}\label{eta}
			\begin{split}
				|\nabla\eta| = \left|\left(\frac{\alpha_0+q-\frac{p}{2}+1}{q-p+1}\right)
				\eta^{\frac{\alpha_0+\frac{p}{2}}{\alpha_0+q-\frac{p}{2}+1}}\nabla\gamma\right|
				\leq \left(\frac{\alpha_0+q-\frac{p}{2}+1}{q-p+1}\right)\eta^{\frac{\alpha_0+\frac{p}{2}}{\alpha_0+q-\frac{p}{2}+1}}\frac{C }{R}.
			\end{split}
		\end{align}
		It follows from  \eqref{eta} and Young inequality that
		\begin{align}
			\begin{split}\label{inteta}
				a_{9}
				R^{2}\int_{\Omega}(\chi f)^{\frac{p}{2}+\alpha_0}|\nabla\eta|^2\leq		
				&\ a_{10}\alpha_0^2\int_{\Omega}(\chi f)^{\frac{p}{2}+\alpha_0}
				\eta^{\frac{2\alpha_0+p}{\alpha_0+q-\frac{p}{2}+1}}\\
				\leq &\
				a_{10}\alpha_0^2
				\left(\int_{\Omega}(\chi f)^{\alpha_0+q-\frac{p}{2}+1}\eta^2\right)^{\frac{\alpha_0+\frac{p}{2} }{\alpha_0+q-\frac{p}{2}+1}}
				V^{\frac{q-p+1 }{\alpha_0+q-\frac{p}{2}+1}}
				\\
				\leq &\
				\frac{a_{8}\alpha_0  R^{2}}{2 }\int_{\Omega}(\chi f)^{\alpha_0+q-\frac{p}{2}+1}\eta^2
				+
				\frac{a_{8}\alpha_0  R^{2}}{2 }
				\left(\frac{2a_{10}\alpha_0}{a_8R^2}\right)^{\frac{\alpha_0+q-\frac{p}{2}+1}{q-p+1}}V.
			\end{split}
		\end{align}
		Combining \eqref{split400} with \eqref{inteta}, we obtain
		\begin{align*}
			e^{- \alpha_0}V^{\frac{2}{n}}\left(\int_{\Omega}(\chi f)^{\frac{n}{n-2}(\frac{p}{2}+\alpha_0)}\eta^{\frac{2n}{n-2}}\right)^{\frac{n-2}{n}}
			\leq a_{10}\alpha_0^2
			\left(\frac{2a_{10}\alpha_0}{a_8R^2}\right)^{\frac{\alpha_0+ \frac{p}{2}}{q-p+1}}V + a_{9}\alpha^3_0\left(\frac{2a_9\alpha_0^2}{a_8R^2} \right)^{\frac{\alpha_0+\frac{p}{2}}{q-p+1}}V .
		\end{align*}
		Since $\alpha_0\geq 1$, the above inequality implies that
		\begin{align}\label{equation4.27}
			\left(\int_{\Omega}(\chi f)^{\frac{n}{n-2}(\frac{p}{2}+\alpha_0)}\eta^{\frac{2n}{n-2}}\right)^{\frac{n-2}{n}}
			\leq e^{\alpha_0}a_{11}^{\alpha_0}V^{1-\frac{2}{n}}\alpha^3_0\left(\frac{ K}{ R^2} \right)^{\frac{\alpha_0+\frac{p}{2}}{q-p+1}}.
		\end{align}	
		Taking $1/\left(\frac{p}{2}+\alpha_0\right)$ power on the both sides of \eqref{equation4.27}, we have
		\begin{align}\label{58}
			\|\chi f\|_{L^{\alpha_1}(B_{\frac{3R}{4}}(o))}\leq a_{12}V^{\frac{1}{\alpha_1}}\left(\frac{\alpha_0^2}{ R^2} \right)^{\frac{1}{q-p+1}}.
		\end{align}
		Here, $a_{12}$ depends only on $n$, $p$, $q$ and $r$. The proof of this lemma is completed.
	\end{proof}
	
	\subsubsection{Proof of \thmref{t5} or \propref{lem5.1} (Nash-Moser iteration)}
	\begin{proof}
		Deleting the second term on the left-hand side of inequality in \eqref{442}, we have
		\begin{align}\label{equation:4.29}
			\begin{split}
				&e^{-\alpha_0}V^{ \frac{2}{n}}\left(\int_{\Omega}(\chi f)^{\frac{n}{n-2}(\frac{p}{2}+\alpha )}\eta^{\frac{2n}{n-2}}\right)^{\frac{n-2}{n}}\\
				\leq &\ a_{9}\left(\alpha_0^2\alpha  \int_{\Omega} (\chi f)^{\frac{p}{2}+\alpha }\eta^2
				+ R^{2}\int_{\Omega}(\chi f)^{\frac{p}{2}+\alpha }|\nabla\eta|^2\right).
			\end{split}
		\end{align}
		For $l= 1,\, 2,\,\cdots$, we set $$\alpha_{l+1}=\alpha_l\frac{n}{n-2}=\alpha_1\left(\frac{n}{n-2}\right)^l\quad \mbox{and}\quad r_l=\frac{R}{2}+\frac{R}{4^{l }}.$$
		Let $\Omega_l=B_{r_l}(o)$. We choose $\eta_l\in C^{\infty}_0(\Omega_l)$ such that
		$$
		\begin{cases}
			0\leq \eta_l(x)\leq 1,& \forall x\in \Omega_l; \\
			\eta_l(x)\equiv1, &\forall x\in \Omega_{l+1} ;\\
			|\nabla\eta_l(x)|\leq \frac{C(n)4^l}{R},& \forall x\in \Omega_l.
		\end{cases}$$
		By substituting $\eta=\eta_l$ and $\alpha=\alpha_l-p/2$ into \eqref{equation:4.29}, we deduce that
		\begin{align}
			\label{444}
			\begin{split}
				e^{-\alpha_0}V^{ \frac{2}{n}}\left(\int_{\Omega_{l+1}}(\chi f)^{\alpha_{l+1}}\right)^{\frac{n-2}{n}}
				&\leq e^{-\alpha_0}V^{ \frac{2}{n}}\left(\int_{\Omega_{l}}(\chi f)^{\alpha_{l+1}}\eta_l^{\frac{2n}{n-2}}\right)^{\frac{n-2}{n}}\\
				&\leq  a_{9}\left(
				\alpha_0^2\alpha_l  \int_{\Omega_l} (\chi f)^{\alpha_l }\eta^2_l
				+
				R^{2}\int_{\Omega_l}(\chi f)^{\alpha_l }|\nabla\eta_l|^2
				\right).
			\end{split}
		\end{align}
		Noting the fact
		$$
		\alpha_0^2\alpha_l  \eta^2_l
		+
		R^{2} |\nabla\eta_l|^2\leq \alpha_0^2\alpha_1\left(\frac{n}{n-2}\right)^l+C^2(n)16^l\leq \alpha_1^3 a_{13}^l,
		$$
		we obtain
		\begin{align}\label{equation:4.32}
			\left(\int_{\Omega_{l+1}}(\chi f)^{\alpha_{l+1}}\right)^{\frac{n-2}{n}}
			\leq  e^{\alpha_0}V^{-\frac{2}{n}}a_{9 }\alpha_1^3 a_{13}^l
			\int_{\Omega_l} (\chi f)^{\alpha_l}.
		\end{align}
		Taking $1/\alpha_l$ power on both sides of \eqref{equation:4.32} yields
		\begin{align*}
			\|\chi f\|_{L^{\alpha_{l+1}}(\Omega_{l+1})}
			\leq
			\left(e^{\alpha_0}V^{-\frac{2}{n}}a_{9}\alpha_1^3  a_{13}^l \right)^{\frac{1}{\alpha_{l}}}\|\chi f\|_{L^{\alpha_{l}}(\Omega_{l})}.
		\end{align*}
		By an iteration to $\infty$ and the facts
		\begin{align*}
			\sum_{l=1}^\infty\frac{1}{\alpha_l} = \frac{n}{2\alpha_1} \quad  \text{ and }\quad
			\sum_{l=1}^\infty\frac{l}{\alpha_l} = \frac{n^2}{4\alpha_1}<  \frac{n(n-2)}{2p}
		\end{align*}
		we have
		\begin{align}\label{446}
			\begin{split}
				\|\chi f\|_{L^{\infty}(B_{\frac{R}{2}}(o))}
				\leq & \left(e^{\alpha_0}V^{-\frac{2}{n}}a_{9}\alpha^3_1 \right)^{\sum_{l=1}^{\infty}\frac{1}{\alpha_{l}}}a_{13}^{\sum_{l=1}^\infty \frac{l}{\alpha_l}}\|\chi f\|_{L^{\alpha_{1}}(\Omega_{1})}\\
				\leq &\ a_{14}V^{-\frac{1}{\alpha_1}}\|\chi f\|_{L^{\alpha_{1}}(\Omega_{1})}.
			\end{split}
		\end{align}
		Combining \eqref{446} with \eqref{58}, we obtain
		\begin{align*}
			\|\chi f\|_{L^{\infty}(B_{\frac{R}{2}}(o))}\leq
			a_{14}V^{-\frac{1}{\alpha_1}} a_{12}V^{\frac{1}{\alpha_1}} \left(\frac{\alpha_0^2}{ R^2} \right)^{\frac{1}{q-p+1}}
			\leq a_{15}\left(\frac{\alpha^2_0}{ R^2} \right)^{\frac{1}{q-p+1}}.
		\end{align*}
		It follows that
		\begin{align*}
			\sup_{B_{\frac{R}{2}}(o)}|\nabla u|
			\leq \max\left\{1,\, C\left[\left(\frac{1+\sqrt{\kappa}R}{R}\right)^{\frac{1}{q-p+1}}+\|b\|^{\frac{1}{q-p+1}}_{1,\infty}\left(1+ \|u\|^{2r}_{\infty}\right)^{\frac{1}{2(q-p+1)}}\right]\right\}.
		\end{align*}
		We has finished the proof of \thmref{t5}.
	\end{proof}
	
	\section{Properties of non-negative solutions to \eqref{equ0}}
	\subsection{$L^\infty$-estimate of non-negative solutions}
	In this section we consider the local $L^{\infty}$-estimate for any non-negative function $u$ satisfying the differential inequality
	\begin{align}\label{c0esti}
		\Delta_pu\geq -Cu^r,
	\end{align}
	where $0\leq r\leq p-1$ and $C$ is a positive constant. Since $b(x)\in L^\infty(B_R(o))$, any non-negative solution to \eqref{equ0} satisfies \eqref{c0esti}. We are ready to show \propref{c0estimate}. As before, we denote $\Omega=B_R(o)$.
	
	\begin{proof} Choose $\varphi = u^s\eta^t\chi$, where $\chi=\chi_{\{u\geq 1\}}$ is the characteristic function of the set $\{x:\, u(x)\geq 1\}$ and $\eta\in C^{\infty}_0(\Omega)$ is a non-negative cut-off function which will be determined later. Note that $\varphi\ge 0$. Multiplying $\varphi$ on both sides of \eqref{c0esti} and integrating over $\Omega$, we have
		\begin{align}
			\label{52}
			\begin{split}
				&\int_\Omega su^{s-1}\eta^t|\nabla u|^p\chi+\int_\Omega tu^s\eta^{t-1}|\nabla u|^{p-2}\la\nabla \eta,\nabla u\ra\chi\\
				\leq &C\int_\Omega u^{r+s}\eta^t\chi +\int_\Omega\mathrm{div}\left(|\nabla u|^{p-2}u^s\eta^t\nabla u\right)\chi.
			\end{split}
		\end{align}
		Since the outward pointing normal vector of the region $\{x:u(x)= 1\}$ is $-\nabla u/|\nabla u|$, we have
		\begin{align*}
			\int_\Omega\mathrm{div}(|\nabla u|^{p-2}u^s\eta^t\nabla u)\chi=\int_{\{u\geq 1\}}\mathrm{div}(|\nabla u|^{p-2}u^s\eta^t\nabla u)=-\int_{\{u=1\}}|\nabla u|^{p-1}u^s\eta^t\leq 0.
		\end{align*}
		Omitting the above boundary integral term and applying Cauchy inequality to \eqref{52} gives
		\begin{align}\label{301}
			\int_\Omega su^{s-1}\eta^t|\nabla u|^p\chi-\int_\Omega tu^s\eta^{t-1}|\nabla u|^{p-1}|\nabla \eta|\chi\leq C\int_\Omega u^{r+s}\eta^t\chi.
		\end{align}
		
		Now, using Young inequality to the second term on the left-hand side (denoted by $L_2^*$) of \eqref{301}, we can see that
		\begin{align}\label{left2}
			\begin{split}
				L_2^* = & -\int_\Omega\left(\epsilon|\nabla u|^{p-1}u^{\frac{(p-1)(s-1)}{p}}\eta^{\frac{(p-1)t}{p}}\right)
				\left(t\epsilon^{-1}u^{\frac{p+s-1}{p}}\eta^{\frac{t}{p}-1}|\nabla \eta|\chi\right)\\
				\geq&-\frac{p-1}{p}\epsilon^{\frac{p}{p-1}}\int_\Omega |\nabla u|^{p}u^{s-1}\eta^{t}\chi  -\frac{1}{p}t^p\epsilon^{-p}\int_\Omega u^{ p+s-1 }\eta^{t-p}|\nabla \eta|^p\chi.
			\end{split}
		\end{align}
		Choosing $\epsilon = s^{\frac{p-1}{p}}$ in \eqref{left2} and substituting \eqref{left2} into \eqref{301}, we obtain
		\begin{align}\label{302}
			\frac{s}{p}\int_\Omega u^{s-1}\eta^t|\nabla u|^p\chi-\frac{1}{p}t^ps^{1-p}\int_\Omega u^{ p+s-1 }\eta^{t-p}|\nabla \eta|^p\chi\leq\int_\Omega  u^{r+s}\eta^t\chi.
		\end{align}
		
		We will now address the first term on the left-hand side of \eqref{302}. Observing the following
		$$(a+b)^p\leq 2^p(a^p+b^p)$$ for any $a\geq 0$ and $b\geq 0$, we can simplify the expression
		\begin{align}\label{equa55}
			|\nabla(u^{\xi}\eta^{\theta})|^p\leq 2^p\left[\left(\xi u^{\xi-1}\eta^{\theta}|\nabla u|\right)^p + \left(\theta u^{\xi}\eta^{\theta-1}|\nabla\eta|\right)^p\right].
		\end{align}
		By picking the constants $\xi$ and $\theta$ in \eqref{equa55} such that
		$$p(\xi-1)=s-1 \quad \text{and} \quad \theta p=t,$$
		i.e.,
		$$
		\xi = \frac{p+s-1}{p}\quad\text{and}\quad\theta=\frac{t}{p},
		$$
		we can see that
		\begin{align}\label{equa56}
			\left(\frac{p}{2(p+s-1)}\right)^p\left|\nabla\left(u^{\frac{p+s-1}{p}}\eta^{\frac{t}{p}}\right)\right|^p\leq  u^{s-1}\eta^{t}|\nabla u|^p+\left(\frac{t}{p+s-1}\right)^p u^{p+s-1}\eta^{t-p}|\nabla\eta|^p.
		\end{align}
		Substituting \eqref{equa56} into \eqref{302}, we arrive at
		\begin{align}
			\label{57}
			\begin{split}
				&\frac{s}{p}\left(\frac{p}{2(p+s-1)}\right)^p\int_\Omega \left|\chi\nabla\left(u^{\frac{p+s-1}{p}}\eta^{\frac{t}{p}}\right)\right|^p
				- \frac{s}{p}\left(\frac{t}{p+s-1}\right)^p \int_\Omega u^{p+s-1}\eta^{t-p}|\nabla\eta|^p\chi\\
				\leq &\
				\frac{1}{p}t^ps^{1-p}\int_\Omega u^{ p+s-1 }\eta^{t-p}|\nabla \eta|^p\chi + C\int_\Omega u^{r+s}\eta^t\chi.
			\end{split}
		\end{align}
		Taking $t = p$ in \eqref{57}, we have
		\begin{align*}
			\frac{s}{p}\left(\frac{p}{2(p+s-1)}\right)^p\int_\Omega \left|\chi\nabla\left(u^{\frac{p+s-1}{p}}\eta^{\frac{t}{p}}\right)\right|^p
			\leq 2p^{p-1}s^{1-p}\int_\Omega u^{p+s-1} |\nabla \eta|^p\chi+\int_\Omega u^{r+s}\eta^p\chi,
		\end{align*}
		i.e.,
		\begin{align}\label{equa:5.8}
			\int_\Omega \left|\chi\nabla\left(u^{\frac{p+s-1}{p}}\eta^{\frac{t}{p}}\right)\right|^p
			\leq  C_1 \left[ \int_\Omega u^{ p+s-1 } |\nabla \eta|^p\chi+s^{p-1}\int_\Omega u^{r+s}\eta^p\chi\right],
		\end{align}
		where $C_1$ depends on $p$.
		
		By Sobolev inequality \eqref{lpsobolev}, we have
		\begin{align}
			\label{equa:5.9}
			\begin{split}
				\left\|u^{\frac{p+s-1}{p}}\eta \chi\right\|^p_{\frac{np}{n-p}}
				\leq&\
				C_2\left(\left\|\chi\nabla\left(u^{\frac{p+s-1}{p}}\eta \right)\right\|^p_p
				+ \left\| u^{\frac{p+s-1}{p}}\eta \chi \right\|^p_p\right)\\
				\leq &\
				C_1C_2 \left( \int_\Omega u^{ p+s-1 } |\nabla \eta|^p\chi+s^{p-1}\int_\Omega u^{r+s}\eta^p\chi\right) + C_2 \left\| u^{\frac{p+s-1}{p}}\eta \chi\right\|^p_p\\
				\leq&\
				C_3 \left[ \int_\Omega u^{ p+s-1 }\chi \left(|\nabla \eta|^p+1\right)+s^{p-1}\int_\Omega u^{r+s}\eta^p\chi\right],
			\end{split}
		\end{align}
		where the constants $C_2$ and $C_3$ depend on $M$, $p$ and $r$.
		
		Set the sequence $\rho_l = \frac{R}{2}+\frac{R}{4^l}$ for $l=1,\,2,\,\cdots$. For any ball $B_l=B(o,\rho_l)$, we choose a cut-off function $\eta_l\in C_0^{\infty}(B_{l})$ satisfying
		$$
		\begin{cases}
			\eta_l(x)\equiv 1, & x\in B_{l+1};\\
			0\leq \eta_l(x)\leq 1,|\nabla \eta_l|\leq \frac{C4^l}{R}, & x\in B_l.
		\end{cases}
		$$
		Since  $p-1\geq r$,  we have $$u^{r+s}\chi\leq u^{ s+p-1 }\chi.$$
		It follows from \eqref{equa:5.9} that
		\begin{align}\label{equa5.10}
			\|u \chi\|^{s+p-1}_{L^{\frac{n(s+p-1)}{n-p}}(B_{l+1})}\leq C_4\left(\frac{4^{lp}}{R^p}+s^{p-1}+1\right)\|u\chi\|_{L^{s+p-1}(B_l)}^{s+p-1}.
		\end{align}
		
		Now we define a sequence $s_l$ ($l=0,\,1,\, 2,\,\cdots$) by
		$$s_{l}=\left(\frac{n}{n-p}\right)^{l}p.$$
		We choose suitable $s$ such that $s+p-1=s_l$. It is easy to see that
		\begin{align}\begin{split}
				\label{512a}
				\|u\chi \|^{s_l}_{L^{s_{l+1}}(B_{l+1})}
				\leq\
				&C_5\left(\frac{4^{lp}}{R^p} +1+s_l^{p-1}\right)\|u\chi\|_{L^{s_l}(B_l)}^{s_l }\\
				\leq\
				&C_5\left[\frac{4^{lp}}{R^p} +1+p^{p-1}\left(\frac{n}{n-p}\right)^{l(p-1)}\right]\|u\chi\|_{L^{s_l}(B_l)}^{s_l}.
			\end{split}
		\end{align}
		We can choose some constant
		$$C_6>\max\left\{4^p,\, \left(\frac{n}{n-p}\right)^{p-1}\right\}$$
		such that
		\begin{align}\label{equa:5.11}
			\|u\chi \|_{L^{\alpha_{l+1}}(B_{l+1})}\leq  \left(\frac{1}{R^p} +p^{p-1}\right)^{\frac{1}{s_l}} C_6^{\frac{l}{s_l}}\|u\chi\|_{L^{s_l}(B_l)}.
		\end{align}
		
		Due to the facts
		$$
		\sum_{l=1}^\infty\frac{1}{s_l}=\frac{n-p}{p^2}\quad\text{and }\quad
		\sum_{l=1}^\infty\frac{l}{s_l}=\frac{n}{p^2},
		$$
		we can deduce from \eqref{equa:5.11} that		
		\begin{align*}
			\|u\chi\|_{L^{\infty}(B_{\frac{R}{2}}(o))}\leq & \left(\frac{1}{R^p} +p^{p-1}\right)^{\sum_{l=1}^\infty\frac{1}{s_l}} C_6^{\sum_{l=1}^\infty\frac{l}{s_l}}\|u\chi\|_{L^{s_0}(B_1)}
			\\
			&\leq C_7
			\left(\frac{1}{R^p} +p^{p-1}\right)^{\frac{n-p}{p^2}}  \|u\chi\|_{L^{p}(B_1)}
			\\
			&\leq
			C_8\left(\frac{1}{R}+1\right)^{\frac{n-p}{p}}\|u\chi\|_{L^{p}(B_1)}.
		\end{align*}
		So we obtain
		\begin{align}\label{5.14}
			\|u\|_{L^{\infty}(B_{\frac{R}{2}}(o))}\leq \max\left\{1,C_9\|u\|_{L^{p}(B_R(o))}\right\}.
		\end{align}
	\end{proof}
	It is worth to mention that $$C_9 = C_8\left(\frac{1}{R}+1\right)^{\frac{n-p}{p}}$$ depends on $\left(\frac{1}{R}+1\right)$, but $C_9$ is uniformly bounded as $R\to \infty$.
	
	\subsection{Proof of \thmref{thm17}}
	
	\begin{proof}
		\thmref{thm17} is a direct consequence of \propref{c0estimate} and \propref{lem5.1}. Indeed, by \propref{c0estimate} we know that any solution of equation \eqref{equ0} in a geodesic ball $B_R(o)$ satisfies
		\begin{align}\label{5.22}
			\sup_{B_{\frac{R}{2}}(o)} |u|\leq C,
		\end{align}
		where the constant $C=C(n,p,q,r, \|u\|_{L^p(B_R(o))})$.
		
		Now, since $u$ is also a solution of equation \eqref{equ0} in a geodesic ball $B_{\frac{R}{2}}(o)$ with radius $R/2$, by \propref{lem5.1} and \eqref{5.22} we have
		\begin{align}\label{equation:5.16}
			\sup_{B_{\frac{R}{4}}(o)}|\nabla u|\leq\max\left\{1,\, C\left[ \left(\frac{1+\sqrt{\kappa}R}{R}\right)^{\frac{1}{q-p+1}}+N \right] \right\},
		\end{align}
		where $N = N\left(n,p,q,r, \|b\|_{1,\infty}, \|u\|_{p}\right)$.
		Thus we complete the proof.
	\end{proof}
	
	If $u\in L^p(M, g)$ is a global non-positive solution of (\ref{equ0}) on a non-compact complete Riemannian manifold. By letting $R\to\infty$ in \eqref{equation:5.16}, we deduce that $|\nabla u|$ is uniformly bounded, which means $u$ has at most linear growth.
	\medskip\medskip
	
	\noindent {\it\bf{Acknowledgements}}: The authors are grateful to Jingchen Hu for the helpful discussion. The author Y. Wang is supported partially by NSFC (Grant No.11971400) and National Key Research and Development projects of China (Grant No. 2020YFA0712500).
	\medskip\medskip
	
	\bibliographystyle{unsrt}

\end{document}